    \definecolor{urlcolor}{rgb}{0,.145,.698}
    \definecolor{linkcolor}{rgb}{.71,0.21,0.01}
    \definecolor{citecolor}{rgb}{.12,.54,.11}
    \definecolor{ansi-black}{HTML}{3E424D}
    \definecolor{ansi-black-intense}{HTML}{282C36}
    \definecolor{ansi-red}{HTML}{E75C58}
    \definecolor{ansi-red-intense}{HTML}{B22B31}
    \definecolor{ansi-green}{HTML}{00A250}
    \definecolor{ansi-green-intense}{HTML}{007427}
    \definecolor{ansi-yellow}{HTML}{DDB62B}
    \definecolor{ansi-yellow-intense}{HTML}{B27D12}
    \definecolor{ansi-blue}{HTML}{208FFB}
    \definecolor{ansi-blue-intense}{HTML}{0065CA}
    \definecolor{ansi-magenta}{HTML}{D160C4}
    \definecolor{ansi-magenta-intense}{HTML}{A03196}
    \definecolor{ansi-cyan}{HTML}{60C6C8}
    \definecolor{ansi-cyan-intense}{HTML}{258F8F}
    \definecolor{ansi-white}{HTML}{C5C1B4}
    \definecolor{ansi-white-intense}{HTML}{A1A6B2}
    \definecolor{ansi-default-inverse-fg}{HTML}{FFFFFF}
    \definecolor{ansi-default-inverse-bg}{HTML}{000000}
    \let\Oldtex\TeX
    \let\Oldlatex\LaTeX
    \renewcommand{\TeX}{\textrm{\Oldtex}}
    \renewcommand{\LaTeX}{\textrm{\Oldlatex}}
    \title{Untitled5}
\tiny\color{gray},
\DeclareMathOperator{\N}{\mathbf{N}}
\DeclareMathOperator{\Z}{\mathbf{Z}}
\DeclareMathOperator{\R}{\mathbf{R}}
\DeclareMathOperator{\dd}{\mathbf{d}}
\DeclareMathOperator{\ee}{\mathbf{e}}
\DeclareMathOperator{\Hom}{\textup{Hom}}
\DeclareMathOperator{\GL}{\textup{GL}}
\DeclareMathOperator{\Ext}{\textup{Ext}}
\DeclareMathOperator{\C}{\mathbf{C}}
\DeclareMathOperator{\Q}{\mathbf{Q}}
\DeclareMathOperator{\F}{\mathbf{F}}
\DeclareMathOperator{\HH}{\mathbf{H}}
\DeclareMathOperator{\ad}{\textup{ad}}
\DeclareMathOperator{\Exp}{\textup{Exp}}
\DeclareMathOperator{\ch}{\textup{ch}}
\DeclareMathOperator{\Log}{\textup{Log}}
\DeclareMathOperator{\imm}{\textup{im}}
\DeclareMathOperator{\ree}{\textup{re}}
\newtheorem{theorem}{Theorem}[section]
\newtheorem{proposition}[theorem]{Proposition}
\newtheorem{conj}[theorem]{Conjecture}
\theoremstyle{definition}
\newtheorem{example}[theorem]{Example}
\theoremstyle{remark}
\newtheorem{remark}[theorem]{Remark}
\numberwithin{equation}{section}
\begin{document}

\title{Asymptotic behaviour of Kac polynomials}

\author{Lucien Hennecart}

\address{Universit\'e Paris-Saclay, CNRS,  Laboratoire de math\'ematiques d'Orsay, 91405, Orsay, France}

\email{lucien.hennecart@universite-paris-saclay.fr}

\date{\today}

\begin{abstract}
We conjecture a formula supported by computations for the valuation of Kac polynomials of a quiver, which only depends on the number of loops at each vertex. We prove a convergence property of renormalized Kac polynomials of quivers when increasing the number of arrows: they converge in the ring of power series, with a linear rate of convergence. Then, we propose a conjecture concerning the global behaviour of the coefficients of Kac polynomials. All computations were made using SageMath.
\end{abstract}

\maketitle

\section{Introduction}
Kac polynomials are a family of polynomials associated to a quiver, one for each positive root. They are the counting polynomials of absolutely indecomposable representations of fixed dimension vector of the given quiver introduced by Kac in the early eighties (\cite{MR607162,MR677715,MR718127}). These polynomials are of considerable current interest due to their appearance in the different constructions of Yangians associated to a general Kac-Moody algebra and when counting the points of meaningful algebraic stacks, in particular the points of the stack of (nilpotent) representations of the preprojective algebra of the quiver  (\cite{2012arXiv1211.1287M,Okounkovconj,2013arXiv1311.7172D,2016arXiv160102479D,2020arXiv200703289D,BSV,MR4069884}). Lots of conjectures have been proposed by Kac concerning the integrality and positivity of their coefficients and a Lie theoretic interpretation of the constant coefficient (\cite{MR718127}). The integrality and the degree is known since the beginning and the positivity was proved in the last decade by Hausel, Letellier and Rodriguez-Villegas using the geometry of Nakajima quiver varieties (\cite{MR3034296}). Then, Davison interpreted Kac polynomials as Donaldson-Thomas invariants of the tripled quiver associated to $Q$, giving a new proof of the positivity of the coefficients by a purity argument (\cite{MR3826830}). Kac polynomials are now conjectured to give the graded character of the Maulik-Okounkov Lie algebra associated to $Q$ (\cite{Okounkovconj}).

In this paper, we study asymptotic properties of Kac polynomials when making the number of arrows vary and go to infinity. First, we conjecture an explicit formula for the valuation of the polynomials depending only on the number of loops at each vertex. This is supported by computations. Then, we prove that as the number of arrows goes to infinity, renormalized Kac polynomials converge to well-defined power series in $\N[[q]]$ (when the quiver has loops, we need to choose a direction, but we conjecture that the convergence holds without this choice). In fact, we prove a stronger theorem giving a canonical way of writing Kac polynomials as a quotient of two polynomials. The denominator is independent of the number of arrows and its roots are roots of unity. This theorem allows explicit computations of Kac polynomials in a fixed dimension vector as a function of the multiplicity of the arrows. We give several examples of the formulas produced. The code used for the computations is given in the appendix of this paper and is also available on the author's webpage.

\section{Kac polynomials}

\subsection{Quiver representations}
\label{quivrep}
Let $Q=(I,\Omega)$ be a quiver with set of vertices $I$ and set of arrows $\Omega$, both finite. We work over a finite field $\F_q$. A \emph{representation} of $Q$ is the data of a finite dimensional vector space for any vertex and a linear map for any arrow. Finite dimensional representations of $Q$ form an abelian category which has been studied for some time now, beginning with Gabriel (\cite{MR332887}). To $Q$, we associate a bilinear form (its \emph{Euler form}):
\[
 \begin{matrix}
  \langle-,-\rangle&:&\Z^I\times\Z^I&\rightarrow& \Z\\
  &&(\dd,\ee)&\mapsto&\sum_{i\in I}\dd_i\ee_i-\sum_{\alpha:i\rightarrow j}\dd_i\ee_j.
 \end{matrix}
\]
It coincides with the Euler form of the abelian category of finite dimensional representations of $Q$ (which is of finite global dimension), which is by definition the alternate sum of the dimensions of the vector spaces over $\F_q$ given by $\Ext^i_Q(M,N)$ for $M$ of dimension vector $\dd$ and $N$ of dimension vector $\ee$ (and does not depend on the choice of $M,N$).

\subsection{Kac-Moody Lie algebras and Weyl group}
To an arbitrary loop-free quiver $Q=(I,\Omega)$, we can associate a Kac-Moody Lie algebra, given by generators and relations, generalizing the relation between simple Lie algebras and Dynkin graphs. We denote by $\mathfrak{g}_Q$ the Kac-Moody algebra corresponding to $Q$. It has a $\Z^I$-grading. We write $\mathfrak{g}_Q=\bigoplus_{\dd\in\Z^I}\mathfrak{g}_Q[\dd]$. We also associate to $Q$ a Coxeter group $W$, called the Weyl group of the quiver. We refer to \cite{MR1104219} for details on these constructions. A slightly larger class of Lie algebras has been defined by Borcherds using quivers with possible loops in \cite{MR943273}. Although they are very closely related to this work, they won't appear directly here.

\subsection{Counting representations}
Recall that representations of $Q$ over $\F_q$ together with morphisms between them form an abelian category. A representation of $Q$ is called \emph{indecomposable} provided it cannot be non-trivially written as a direct sum of two representations of $Q$. A representation of $Q$ is called \emph{absolutely indecomposable} when it remains indecomposable when tensored over $\F_q$ with the algebraic closure $\overline{\F}_q$ of $\F_q$.
At the begining of the $80$'s, Kac introduced many families of counting functions which happen to be polynomials in $q$ (\cite{MR677715,MR557581,MR607162,MR718127}). They are the number of isomorphism classes of representations of $Q$ over $\F_q$ of dimension vector $\dd$, $M_{Q,\dd}(q)$; the number of isomorphism classes of indecomposable representations of $Q$ over $\F_q$ of dimension vector $\dd$, $I_{Q,\dd}(q)$; and the number of isomorphism classes of absolutely indecomposable representations of $Q$ over $\F_q$ of dimension vector $\dd$, $A_{Q,\dd}(q)$. The family of polynomials $(A_{Q,\dd}(q))_{\dd\in\N^I}$ is the better behaved one, as illustrated by the Kac conjectures proved by Hausel \cite{MR2651380} (Theorem \ref{Hausel}) and Hausel--Letellier--Rodriguez-Villegas \cite{MR3034296} (Theorem \ref{HLRV}). The three families of functions $M_{Q,\dd}(q), I_{Q,\dd}(q)$ and $A_{Q,\dd}(q)$ are related as follows (\cite[Lemma 3.1]{MR3968896}\cite{MR1752774}):
\begin{equation}\label{plethystic}
 \sum_{\dd\geq 0}M_{Q,\dd}(q)z^{\dd}=\Exp_{z}\left(\sum_{\dd>0}I_{Q,\dd}(q)z^{\dd}\right)=\Exp_{q,z}\left(\sum_{\dd>0}A_{Q,\dd}(q)z^{\dd}\right),
\end{equation}
where $\Exp_z$ and $\Exp_{z,t}$ denote the plethystic exponentials, see \cite[Section 1.5]{MR3968896}. The first equality follows from  the Krull-Schmidt property of the category of representations of the quiver and the second from Galois descent for quiver representations.

The results are as follows.
\begin{theorem}[Kac,{\cite[\S\S1.10, 1.15]{MR718127}}]\label{Kac}
 The function $A_{Q,\dd}(q)$ is a polynomial in $q$ with coefficients in $\Z$ of degree $1-\langle\dd,\dd\rangle$. It is nonzero if and only if $\dd$ is a positive root of $\mathfrak{g}_Q$. It does not depend on the orientation of $Q$. If $w\in W$ is an element of the Weyl group such that $\dd$ and $w\dd$ are both positive, then $A_{Q,\dd}(q)=A_{Q,w\dd}(q)$.
\end{theorem}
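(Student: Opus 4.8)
The plan is to compute everything by counting $\F_q$-points and to move freely among the three families $M_{Q,\dd},I_{Q,\dd},A_{Q,\dd}$ using \eqref{plethystic}: both equalities there are invertible by the plethystic logarithm, whose $\sigma$-operations are the Adams maps $q\mapsto q^{m}$ and $z^{\dd}\mapsto z^{m\dd}$, and these preserve $\Z[q][[z]]$. Hence it is enough to prove the numerical claims for any one of the three families and the geometric claims for whichever is convenient. I would anchor the analysis on the stacky mass
\[
\tilde M_{Q,\dd}(q)=\frac{|\Rep_{Q,\dd}(\F_q)|}{|\GL_\dd(\F_q)|}=\sum_{[M]}\frac{1}{|\Aut M|}=\frac{q^{-\langle\dd,\dd\rangle}}{\prod_{i\in I}\prod_{k=1}^{\dd_i}(1-q^{-k})},
\]
which is an explicit rational function of $q$ since $|\Rep_{Q,\dd}(\F_q)|=q^{\sum_{\alpha\colon i\to j}\dd_i\dd_j}$ and $|\GL_\dd(\F_q)|=q^{\sum_i\dd_i^2}\prod_i\prod_{k=1}^{\dd_i}(1-q^{-k})$. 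The key structural input is Hua's formula, which expresses $\sum_{\dd}\tilde M_{Q,\dd}(q)z^{\dd}$ as one universal expression in the series $\sum_{\dd>0}A_{Q,\dd}(q)z^{\dd}$.

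The main work, and the step I expect to be the genuine obstacle, is to show that $q\mapsto A_{Q,\dd}(q)$, a priori only a function on prime powers, is the restriction of a polynomial in $\Z[q]$ of degree $1-\langle\dd,\dd\rangle$. Inverting Hua's formula writes each $A_{Q,\dd}(q)$ as a plethystic-logarithm-type expression in the explicit rational functions $\tilde M_{Q,\ee}(q)$ for $\ee\leq\dd$; the difficulty is that this produces, a priori, a rational function with possible poles at roots of unity coming from the factors $1-q^{-k}$, and one must prove that all such poles cancel so that the result is a genuine polynomial, with integer coefficients. I would organise this as an induction on $\dd$ along the root-system structure, using the reflection functors (below) to reduce to dimension vectors in the fundamental region, where absolutely indecomposable representations can be constructed and counted directly, and where the strata of $\Rep_{Q,\dd}$ are defined over the prime field so that the point counts are honest integer-coefficient polynomials. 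The degree is then a dimension count: $\dim\Rep_{Q,\dd}-\dim\GL_\dd=-\langle\dd,\dd\rangle$, while a generic absolutely indecomposable is a brick whose automorphism group is exactly the scalars $\mathbf{G}_m$, so the locus of such objects modulo isomorphism has dimension $-\langle\dd,\dd\rangle+1$; the number of isomorphism classes therefore grows like $q^{1-\langle\dd,\dd\rangle}$, giving $\deg A_{Q,\dd}=1-\langle\dd,\dd\rangle$ with leading coefficient $1$.

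The remaining assertions I would treat geometrically. Orientation independence is immediate once the inversion above is in place: since $\dd_i\dd_j=\dd_j\dd_i$, the exponent $\sum_{\alpha\colon i\to j}\dd_i\dd_j$ and hence every $\tilde M_{Q,\dd}(q)$ depend only on the underlying graph, and $A_{Q,\dd}$ is a universal function of the $\tilde M_{Q,\ee}$ by Hua's formula. For the Weyl-group invariance with $\dd$ and $w\dd$ both positive, I would use the Bernstein--Gelfand--Ponomarev reflection functors at a sink or a source: these are defined over $\F_q$, preserve absolute indecomposability, and induce a bijection between absolutely indecomposable representations of dimension $\dd$ and of dimension $s_i\dd$ whenever both are positive; composing the simple reflections and resetting the orientation after each step (using orientation independence) yields $A_{Q,\dd}=A_{Q,w\dd}$ for general $w\in W$. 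Finally, the non-vanishing criterion rests on the companion fact that the dimension vectors carrying an indecomposable representation are exactly the positive roots of $\mathfrak{g}_Q$: if $\dd$ is not a positive root there is no indecomposable and $A_{Q,\dd}=0$, whereas for a positive root one exhibits an absolutely indecomposable object -- for a real root the unique (rigid) indecomposable obtained by reflecting a simple, and for an imaginary root a generic representation with connected support after reducing into the fundamental region by the reflection functors.
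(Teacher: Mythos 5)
Your proposal and the paper part ways at the very first comparison point: the paper does not reprove Kac's theorem at all --- it cites \cite{MR718127}, and its one substantive remark (``A word on the proof'') concerns exactly the step your sketch treats most lightly, the integrality of the coefficients. Before that, though, two technical anchors of your argument are wrong. First, your basic identity is false: the kernel inside $\Log$ in Hua's formula (Theorem \ref{formulaHua}) is \emph{not} $\sum_{\dd}\tilde M_{Q,\dd}(q)z^{\dd}$. Test it on the quiver with one vertex and no arrows: the coefficient of $z^2$ in Hua's kernel is $\frac{q^{-2}}{(1-q^{-1})(1-q^{-2})}$ (sum the terms for $\lambda=(2)$ and $\lambda=(1,1)$), whereas $\tilde M_{Q,2}(q)=1/\lvert\GL_2(\F_q)\rvert=\frac{q^{-4}}{(1-q^{-1})(1-q^{-2})}$. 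The mass identity you have in mind holds only after twisting the multiplication by the Euler form (in a quantum torus), so $A_{Q,\dd}$ is \emph{not} a universal function of the stacky masses $\tilde M_{Q,\ee}$, and your one-line orientation-independence argument collapses. It is repairable, but only by returning to Hua's kernel itself, which depends only on the underlying graph because the partition pairing $\langle\lambda,\mu\rangle$ is symmetric. Second, your degree and monicity argument rests on ``a generic absolutely indecomposable is a brick,'' which fails for every positive root that is not a Schur root: for the Kronecker quiver $K_2$ and $\dd=(2,2)=2\delta$ there are \emph{no} bricks at all, yet $A_{K_2,(2,2)}(q)=q+1$ (specialize $r=2$ in Example \ref{exKronecker}) has the predicted degree $1-\langle\dd,\dd\rangle=1$. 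Kac's actual proof of the degree and leading coefficient is the inductive reduction to the fundamental region, and even there multiples of isotropic roots violate the brick claim --- which also undercuts your construction of absolutely indecomposables for imaginary roots via ``generic'' representations.

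The decisive gap is integrality, and it is precisely the one the paper flags: Kac's \S 1.15-style manipulations do not suffice, and the paper instead invokes the argument of \cite[\S 2.3.1]{BSV}, which rests on Katz's deep result in the appendix of \cite{MR2453601}. Your substitute --- ``the strata of $\Rep_{Q,\dd}$ are defined over the prime field so that the point counts are honest integer-coefficient polynomials'' --- does not reach $A_{Q,\dd}$: the absolutely indecomposable locus is indeed a constructible set defined over $\Z$, but $A_{Q,\dd}(q)$ counts its $\GL_{\dd}(\F_q)$-\emph{orbits}, and the stabilizers have varying orders $q^m(q-1)$, so the Burnside/Galois-descent bookkeeping that removes them yields a priori only a polynomial with \emph{rational} coefficients taking integer values at all prime powers; integer-valued polynomials need not have integer coefficients (consider $q(q-1)/2$), and nothing in your sketch closes that gap --- one genuinely needs the Katz input, or some other realization of $A_{Q,\dd}$ as an honest counting polynomial of a variety. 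Finally, for the nonvanishing criterion you quote as a ``companion fact'' that the dimension vectors carrying indecomposables are exactly the positive roots of $\mathfrak{g}_Q$; in a from-scratch proof this cannot be assumed, since it is the hard half of Kac's theorem itself --- the reflection-functor reduction disposes of real roots (and your treatment of Weyl invariance via Bernstein--Gelfand--Ponomarev functors at sinks and sources, re-orienting using orientation independence, is indeed Kac's route), but the imaginary case in the fundamental region is where the real work lies, and your sketch leaves it open.
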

\begin{proof}[A word on the proof]
 The argument of \cite[\S 1.15]{MR718127} does not suffice to prove that the polynomials $A_{Q,\dd}$ have integer coefficients. Instead one can use the argument of \cite[\S 2.3.1]{BSV} which rests on the deep result of Katz in the appendix of \cite{MR2453601}, at the end of page 616. The same argument allows Bozec, Schiffmann and Vasserot to prove that the nilpotent Kac polynomials have integer coefficients (\cite[\S 2.3.1]{BSV}).
\end{proof}

\begin{theorem}[Hausel,\cite{MR2651380}]\label{Hausel}
 If $Q$ is loop-free and $\dd\in\N^I$ is a positive root, then $A_{Q,\dd}(0)=\dim_{\C}\mathfrak{g}_Q[\dd]$.
\end{theorem}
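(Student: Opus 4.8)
The plan is to reduce the statement to a purely geometric computation of the constant term of the counting polynomial $M_{Q,\dd}$, and then to identify that constant term with a Kostant partition function of $\mathfrak{g}_Q$. First I would exploit the plethystic identity \eqref{plethystic}: specialising it at $q=0$, the operator $\Exp_{q,z}$ degenerates to the ordinary plethystic exponential $\Exp_{z}$, since in each factor $(1-q^{n}z^{\dd})^{-a}$ only the $n=0$ part survives, giving $\sum_{\dd}M_{Q,\dd}(0)z^{\dd}=\Exp_{z}\!\left(\sum_{\dd>0}A_{Q,\dd}(0)z^{\dd}\right)$. On the Lie-theoretic side, the Poincar\'e--Birkhoff--Witt theorem yields $\sum_{\dd}\dim U(\mathfrak{n}^+_Q)[\dd]\,z^{\dd}=\prod_{\beta>0}(1-z^{\beta})^{-\dim\mathfrak{g}_Q[\beta]}=\Exp_{z}\!\left(\sum_{\beta>0}\dim\mathfrak{g}_Q[\beta]\,z^{\beta}\right)$, where $\mathfrak{n}^+_Q$ is the positive part of $\mathfrak{g}_Q$. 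Since the plethystic exponential is invertible with inverse $\Log$, the desired equality $A_{Q,\dd}(0)=\dim\mathfrak{g}_Q[\dd]$ for all $\dd$ is equivalent to the family of identities $M_{Q,\dd}(0)=\dim U(\mathfrak{n}^+_Q)[\dd]$, i.e. the constant term of $M_{Q,\dd}$ is the value at $\dd$ of the Kostant partition function of $\mathfrak{g}_Q$.

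Next I would compute $M_{Q,\dd}(0)$ geometrically, using crucially that $Q$ is loop-free, so that $\mathfrak{g}_Q$ is a genuine Kac--Moody algebra and Nakajima's construction applies. Nakajima realises the integrable highest-weight module $L(\Lambda_{\ww})$ on the cohomology of the quiver varieties $\mathcal{M}(\dd,\ww)$, the weight space of weight $\Lambda_{\ww}-\dd$ being governed by their Betti numbers. For a framing $\ww$ deep in the dominant cone, the Weyl--Kac character formula shows that the only term of its numerator contributing to the weight $\Lambda_{\ww}-\dd$ is the one indexed by $w=1$, so the weight multiplicity stabilises to the coefficient of $z^{\dd}$ in $\prod_{\beta>0}(1-z^{\beta})^{-\dim\mathfrak{g}_Q[\beta]}$, that is, to $\dim U(\mathfrak{n}^+_Q)[\dd]$. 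It then remains to match these stabilised Betti numbers with $M_{Q,\dd}(0)$. This is supplied by the comparison of Crawley-Boevey--Van den Bergh and Hausel--Letellier--Rodriguez-Villegas between Kac polynomials and Poincar\'e polynomials of the quiver varieties, together with the purity (no odd cohomology, of Tate type) coming from the hyperk\"ahler structure, which ensures that the $\F_q$-point counts have no cancellation and that passing to $q=0$ extracts exactly the relevant geometric data.

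The main obstacle is the passage from the indivisible to the divisible case. When $\dd$ is indivisible one may choose the framing so that $\mathcal{M}(\dd,\ww)$ is smooth, its cohomology is pure and of Tate type, and the argument above runs cleanly; real roots are immediate, since there $A_{Q,\dd}=1=\dim\mathfrak{g}_Q[\dd]$. For divisible imaginary $\dd$, however, genericity of the stability parameter fails and the relevant variety is singular, so one must replace ordinary cohomology by intersection cohomology and re-establish both purity and the stabilisation of multiplicities in this singular setting, controlling the behaviour of the intersection-cohomology sheaves as the framing grows. This is precisely the step where Kac's original combinatorial reduction stalls, and where the full force of the geometry of Nakajima quiver varieties is needed to rule out cancellations in the alternating sums and to match the two generating functions coefficient by coefficient.
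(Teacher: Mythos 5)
The paper itself offers no proof of Theorem \ref{Hausel}: it is imported verbatim from \cite{MR2651380}, so your sketch can only be measured against Hausel's published argument. Its broad outline you have in fact reproduced correctly. The specialization of \eqref{plethystic} at $q=0$ is legitimate (only the $n=0$ factors of $\Exp_{q,z}$ survive), and the resulting reformulation --- that $A_{Q,\dd}(0)=\dim_{\C}\mathfrak{g}_Q[\dd]$ for all $\dd$ is, via invertibility of the plethystic exponential and PBW, equivalent to $M_{Q,\dd}(0)=\dim U(\mathfrak{n}^+_Q)[\dd]$, i.e.\ to the constant term of $M_{Q,\dd}$ being the Kostant partition function --- is a correct and genuinely useful reduction (granting Theorem \ref{Kac} to handle non-roots). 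Likewise, the stabilization $\mathrm{mult}_{L(\Lambda_{\ww})}(\Lambda_{\ww}-\dd)=\dim U(\mathfrak{n}^+_Q)[\dd]$ for $\ww$ deep in the dominant cone, with only the $w=1$ term of the Weyl--Kac numerator contributing, is exactly the mechanism Hausel exploits.

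There are nevertheless two genuine gaps. First, the step you dispose of in one sentence --- ``it then remains to match these stabilised Betti numbers with $M_{Q,\dd}(0)$'' --- is the entire content of Hausel's paper, and the sources you cite do not supply it: Crawley-Boevey--Van den Bergh \cite{MR2038196} (Theorem \ref{CBtheorem} here) concerns \emph{unframed} moduli of $\Pi_Q$-modules, relates them to $A_{Q,\dd}$ rather than $M_{Q,\dd}$, and requires $\dd$ indivisible; while invoking \cite{MR3034296} is circular in spirit, since that later work builds on the very techniques at issue. What is actually needed is Hausel's generating-function formula for $\sum_{\vv}P_t(\M(\vv,\ww))z^{\vv}$, proved by counting $\F_q$-points of a generic fibre $\mu^{-1}(\xi)$ of the moment map via an arithmetic Fourier-transform argument, together with a polynomial-count-plus-purity statement of Katz type and vanishing of odd cohomology (from a suitable $\C^*$-action); purity is not an automatic consequence of ``the hyperk\"ahler structure,'' as you assert. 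Second, your final paragraph misdiagnoses the divisible case: in the framed setting the Crawley-Boevey trick adjoins a vertex of dimension $1$, so the extended dimension vector $(\dd,1)$ is indivisible for \emph{every} $\dd$; generic stability parameters exist and $\M(\dd,\ww)$ is smooth even for divisible imaginary $\dd$. No intersection cohomology is needed anywhere --- the indivisibility constraint belongs to the unframed Theorem \ref{CBtheorem}, and transplanting it into the framed picture sends you after a difficulty that is not there, while the real one (the point count and purity) is left unaddressed.
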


\begin{theorem}[Hausel--Letellier--Rodriguez-Villegas,\cite{MR3034296}]\label{HLRV}
 The polynomial $A_{Q,\dd}(q)$ has nonnegative coefficients.
\end{theorem}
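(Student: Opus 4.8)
The plan is to realise $A_{Q,\dd}(q)$ as the Poincaré polynomial of a genuine cohomology theory, so that nonnegativity of its coefficients becomes the tautological statement that dimensions of cohomology groups are $\geq 0$. The essential difficulty is that the definition of $A_{Q,\dd}$ is a priori only a stacky point-count over $\F_q$, which by the Weil conjectures computes a weight (or $E$-)polynomial, that is, an \emph{alternating} sum of Betti numbers; such a specialisation can perfectly well have negative coefficients. Everything therefore hinges on a purity statement guaranteeing that no cancellation between cohomological degrees occurs, so that the alternating sum collapses to a sum of honest dimensions.

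Concretely, I would first invoke the orientation-independence in Theorem \ref{Kac} to replace $Q$ by its tripled quiver $\tilde{Q}$, equipped with its canonical cubic potential $W$ whose critical locus is the representation space of the preprojective algebra. Using the plethystic relation \eqref{plethystic}, the polynomials $A_{Q,\dd}$ are the ``connected'' series extracted from the counts $M_{Q,\dd}$, and I would reinterpret the latter as point-counts of the moduli stack of representations. The key step is to pass from counts to cohomology of the moduli of $(\tilde{Q},W)$ via vanishing-cycle (critical) cohomology: the cohomological Hall algebra and Donaldson--Thomas framework produce a canonical graded space, the BPS cohomology, whose graded dimension should reproduce $A_{Q,\dd}$ after the monomial renormalisation dictated by $1-\langle\dd,\dd\rangle$. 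As a consistency check, the constant term would then recover $\dim_{\C}\mathfrak{g}_Q[\dd]$, i.e.\ Theorem \ref{Hausel}.

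The main obstacle is precisely the purity of this BPS cohomology. Granting purity (together with the vanishing of odd cohomology that typically accompanies it for these spaces), the weight in each degree is pinned down, the alternating sum becomes a sum with all signs positive, and each coefficient of $A_{Q,\dd}$ is literally the dimension of a cohomology group, hence nonnegative. I expect to establish purity by approximating the relevant moduli by \emph{smooth} spaces --- for instance the smooth Nakajima quiver varieties built from $\tilde{Q}$ with a large generic framing, where purity is automatic --- combined with a dimensional-reduction and support argument transferring purity to the critical cohomology of $(\tilde{Q},W)$. An alternative route avoids potentials entirely: use a formula of Hausel type expressing the Betti numbers of a smooth framed Nakajima quiver variety in terms of the $A_{Q,\ee}(q)$ for $\ee\leq\dd$, and invert it. The obstacle then migrates to proving that the cohomology of the quiver variety is pure and that the inversion preserves positivity, which again reduces to a hard-Lefschetz-type purity input; in either formulation, purity is the crux and the rest is bookkeeping.
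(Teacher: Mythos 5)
The paper offers no proof of Theorem \ref{HLRV}: it is quoted from \cite{MR3034296}, with the surrounding text recording that the original proof goes through Nakajima quiver varieties (the coefficients of $A_{Q,\dd}$ being dimensions of Weyl-group isotypic components of the compactly supported cohomology of such a variety) and that \cite{MR3826830} later gave a second proof via purity of critical cohomology for the tripled quiver with potential. Your plan reproduces the architecture of both of these proofs, but as a proof it has a genuine gap exactly where you place the word ``expect''. The purity and odd-vanishing of the BPS/critical cohomology of $(\tilde{Q},W)$ is not a transfer-and-bookkeeping step obtainable by ``approximating by smooth framed Nakajima varieties plus dimensional reduction'': it is the main theorem of \cite{MR3826830}, and proving it requires the cohomological integrality machinery (support lemmas, the structure of the perverse filtration on the CoHA of the preprojective algebra), not just smoothness of an auxiliary variety. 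Likewise, the identification of the resulting graded dimension with $A_{Q,\dd}(q)$ after the renormalisation by $1-\langle\dd,\dd\rangle$ is itself a nontrivial theorem (it rests on Mozgovoy's computation identifying DT invariants of the tripled quiver with Kac polynomials via Hua's formula), which you assert rather than establish. So in your primary route the entire mathematical content is deferred.

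Your fallback route has a second, equally concrete gap: inverting a Hausel-type formula expressing Betti numbers of framed quiver varieties in terms of the $A_{Q,\ee}$, $\ee\leq\dd$, does \emph{not} obviously preserve positivity, because the inversion is a plethystic-logarithm/M\"obius-type operation that introduces signs --- this is exactly why Hausel's argument in \cite{MR2651380} yields only the constant-term statement (Theorem \ref{Hausel}) and why positivity of all coefficients resisted for several more years. The actual resolution in \cite{MR3034296} is structurally different from ``invert and check signs'': one enlarges $Q$ by a single framing vertex so that the extended dimension vector becomes indivisible, takes a generic stability parameter so the quiver variety is smooth (whence purity is genuinely automatic), and then realises each coefficient of $A_{Q,\dd}$ directly as the dimension of a specific isotypic component for a Weyl group action on compactly supported cohomology --- no inversion, hence no sign problem. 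In short: you have correctly located the crux (purity plus a cohomological realisation of $A_{Q,\dd}$), but both of your routes stop at the point where the real idea is needed, so the proposal is a roadmap to the known proofs rather than a proof.
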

\begin{remark}
\begin{enumerate}
 \item Theorems \ref{Hausel} and \ref{HLRV} were first proved by Crawley-Boevey and Van den Bergh for indivisible dimension vectors $\dd\in\N^I$ in \cite{MR2038196},
 \item The coefficients of Kac polynomials can be interpreted as the dimensions of certain isotypical components for a Weyl group action on the compactly supported cohomology of a Nakajima quiver variety (\cite{MR3034296}).
\end{enumerate}
\end{remark}

We have a closed formula for the generating function of Kac polynomials due to Hua:
\begin{theorem}[Hua, \cite{MR1752774}]\label{formulaHua}
 \[\sum_{\dd\in\N^I\setminus\{0\}}A_{Q,\dd}(q)z^{\dd}=(q-1)\Log_{z,q}\left(\sum_{\pi=(\pi^i)_{i\in I}\in\mathscr{P}^I}\frac{\prod_{i\rightarrow j\in\Omega}q^{\langle\pi^i,\pi^j\rangle}}{\prod_{i\in I}q^{\langle\pi^i,\pi^i\rangle}\prod_k\prod_{j=1}^{m_k(\pi^i)}(1-q^{-j})}z^{\lvert\pi\rvert}\right),
  \]
where $\Log_{z,q}$ is the plethystic logarithm (\cite[Section 1.5]{MR3968896}), $\mathscr{P}$ is the set of partitions (including the unique partition of $0$), $m_j(\lambda)$ is the multiplicity of the part $j$ in the partition $\lambda$, $\lvert\pi\rvert=(\lvert\pi^i\rvert)_{i\in I}$ and $\langle-,-\rangle$ is the pairing on the set of partitions given by
\[
 \langle\lambda,\mu\rangle=\sum_{i,j}\min(i,j)m_i(\lambda)m_j(\mu).
\]

\end{theorem}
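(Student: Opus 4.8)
The plan is to rephrase the assertion in exponential form and recognise the right-hand series as a stacky point count. Applying $\Exp_{z,q}$ to both sides, Hua's identity is equivalent to
\[
\sum_{\pi=(\pi^i)_{i\in I}\in\mathscr{P}^I}\frac{\prod_{i\rightarrow j\in\Omega}q^{\langle\pi^i,\pi^j\rangle}}{\prod_{i\in I}q^{\langle\pi^i,\pi^i\rangle}\prod_k\prod_{j=1}^{m_k(\pi^i)}(1-q^{-j})}z^{\lvert\pi\rvert}=\Exp_{z,q}\!\left(\frac{1}{q-1}\sum_{\dd>0}A_{Q,\dd}(q)z^{\dd}\right),
\]
so it suffices to compute the left-hand series as a groupoid mass and then to show that this mass is the displayed plethystic exponential.

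First I would interpret the summand representation-theoretically. Let $\mathcal{N}$ be the category of finite-dimensional nilpotent $\F_q[[t]]$-modules (equivalently, nilpotent representations of the Jordan quiver). For the module $M_\lambda$ of type $\lambda$, Macdonald's formulas read $\abs{\Aut_{\mathcal{N}}(M_\lambda)}=q^{\langle\lambda,\lambda\rangle}\prod_k\prod_{j=1}^{m_k(\lambda)}(1-q^{-j})$ and $\dim_{\F_q}\Hom_{\mathcal{N}}(M_\lambda,M_\mu)=\langle\lambda,\mu\rangle$, with exactly the pairing of the statement. Hence the $\pi$-summand equals $\prod_{i\rightarrow j}\abs{\Hom_{\mathcal{N}}(M_{\pi^i},M_{\pi^j})}\big/\prod_i\abs{\Aut_{\mathcal{N}}(M_{\pi^i})}$, which by orbit counting is the groupoid mass of the representations of $Q$ valued in $\mathcal{N}$ with vertex module of type $\pi^i$ at $i$. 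Summing over $\pi$ identifies the left-hand series with $\sum_{\dd}\mu_{\dd}z^{\dd}$, where $\mu_{\dd}$ is the stacky count of the category $\Rep(Q,\mathcal{N})$ in $\F_q$-dimension vector $\dd$. Unwinding $\mathcal{N}$, an object of $\Rep(Q,\mathcal{N})$ is a pair $(V,\theta)$ with $V\in\Rep_{\dd}(Q,\F_q)$ and $\theta\in\End_Q(V)$ nilpotent, so $\mu_{\dd}=\sum_{[V]}\abs{\{\theta\in\End_Q(V):\theta\ \text{nilpotent}\}}/\abs{\Aut_Q(V)}$.

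It then remains to prove $\sum_{\dd}\mu_{\dd}z^{\dd}=\Exp_{z,q}\!\left(\tfrac{1}{q-1}\sum_{\dd>0}A_{Q,\dd}z^{\dd}\right)$. The local input is the structure of endomorphism rings: for an indecomposable $V$ over $\F_q$ the ring $\End_Q(V)$ is local with residue field $\F_{q^r}$, its nilpotent elements form the maximal ideal, of cardinality $q^{\dim\End_Q(V)-r}$, while $\abs{\Aut_Q(V)}=q^{\dim\End_Q(V)}(1-q^{-r})$. These combine to a contribution $1/(q^r-1)$ depending only on $r$, and $V$ is absolutely indecomposable precisely when $r=1$, which is the source of the weight $1/(q-1)$ on $A_{Q,\dd}$. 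To assemble the indecomposable contributions into the total mass I would use the Krull–Schmidt property together with the power structure on stacky counts, and Galois descent to relate indecomposables with residue field $\F_{q^r}$ to absolutely indecomposable representations over $\F_q$; this is the same descent underlying the second equality of \eqref{plethystic}, and it explains why the Adams operations $q\mapsto q^n$ implicit in $\Exp_{z,q}$ regenerate the $1/(q^r-1)$ contributions out of the single term $\tfrac{1}{q-1}A_{Q,\dd}$.

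I expect this final assembly to be the main obstacle. The difficulty is that the stacky count does not factor naively over indecomposables, because there are nonzero homomorphisms between non-isomorphic indecomposable pairs; turning a sum of indecomposable contributions into a genuine plethystic exponential therefore requires a real argument, either through a filtration of the associated Hall/groupoid algebra or through the motivic power structure, with the Frobenius orbits matched term by term against the plethysm in both $z$ and $q$. A variant is Hua's original route, which bypasses the pair category and instead manipulates $\sum_{\dd}\abs{\Rep_{\dd}(\F_q)}/\abs{\GL_{\dd}(\F_q)}\,z^{\dd}$ by means of a Hall–Littlewood symmetric-function identity of Macdonald; there the entire obstacle is concentrated in that symmetric-function identity. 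Either way, vertices carrying loops must be handled separately, since the relevant nilpotent category is then modified, consistent with the choice-of-direction caveat noted in the introduction.
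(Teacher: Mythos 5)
The paper gives no proof of this theorem — it is quoted from Hua's article \cite{MR1752774} — so your attempt must stand on its own. Your first two paragraphs are correct and reproduce the standard modern repackaging of Hua's identity: Macdonald's formulas $\abs{\Hom_{\mathcal{N}}(M_\lambda,M_\mu)}=q^{\langle\lambda,\mu\rangle}$ and $\abs{\Aut_{\mathcal{N}}(M_\lambda)}=q^{\langle\lambda,\lambda\rangle}\prod_k\prod_{j=1}^{m_k(\lambda)}(1-q^{-j})$ are right, the identification of the $\pi$-summand with a groupoid mass is a correct orbit count, and the reduction of the theorem to the identity $\sum_{\dd}\mu_{\dd}z^{\dd}=\Exp_{q,z}\bigl(\tfrac{1}{q-1}\sum_{\dd>0}A_{Q,\dd}(q)z^{\dd}\bigr)$, with $\mu_{\dd}=\sum_{[V]}\abs{\{\theta\in\End_Q(V):\theta\text{ nilpotent}\}}/\abs{\Aut_Q(V)}$, is valid. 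The genuine gap is that this last identity — which you yourself flag as the main obstacle — is only named, not proved, and the obstacle you describe is not where the difficulty actually lies. The missing idea is elementary: homomorphisms between non-isomorphic indecomposable summands do no harm because they lie in the Jacobson radical $J$ of $E=\End_Q(V)$. An element of $E$ is nilpotent if and only if its image in $E/J$ is nilpotent, and $E^{\times}$ is the preimage of $(E/J)^{\times}$, so $\abs{\{\theta\text{ nilpotent}\}}/\abs{E^{\times}}$ depends only on $E/J$, which by Wedderburn (finite division rings being fields) is $\prod_m M_{n_m}(\F_{q^{r_m}})$, indexed by the distinct indecomposable summands $I_m^{\oplus n_m}$ of $V$ with residue degrees $r_m$. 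Hence the mass of $V$ factors over isotypic blocks, and with Krull--Schmidt the whole series becomes $\prod_{[I]\,\mathrm{indec}}\sum_{n\geq 0}q^{r_I(n^2-n)}\abs{\GL_n(\F_{q^{r_I}})}^{-1}z^{n\dim I}$ (Fine--Herstein count of nilpotent matrices), which Euler's identity turns into $\prod_{[I]}\prod_{k\geq 1}(1-q^{-r_Ik}z^{\dim I})^{-1}$. No Hall-algebra filtration or motivic power structure is needed — your suggested detours substantially overestimate this step.

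From there the conclusion is a finite bookkeeping exercise that still has to be carried out: taking logarithms and using the Kac--Hua descent relation $A_{\ee}(q^n)=\sum_{r\mid n}r\,I^{(r)}_{r\ee}(q)$ (Frobenius orbits of size $r$ of absolutely indecomposables over $\F_{q^r}$ match indecomposables over $\F_q$ of residue degree $r$), together with $\tfrac{1}{q^r-1}=\sum_{k\geq 1}q^{-rk}$, matches the Euler product against $\sum_{n\geq 1}\tfrac{1}{n}\psi_n\bigl(\tfrac{1}{q-1}\sum_{\dd}A_{\dd}(q)z^{\dd}\bigr)$; this is the same mechanism as the second equality of \eqref{plethystic}, as you correctly guessed. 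Note also that this argument proves the identity numerically at each prime power, so to get the formal identity of Theorem \ref{formulaHua} one should invoke polynomiality in $q$ (available via Theorem \ref{Kac}) or, as Hua does, organize the computation so that polynomiality is a consequence. Finally, your closing caveat that vertices carrying loops must be handled separately is mistaken: Hua's formula holds verbatim for arbitrary quivers, since at a loop vertex the arrow map is just an element of $\End_{\mathcal{N}}(M_{\pi^i})$, counted by $q^{\langle\pi^i,\pi^i\rangle}$, and nothing in the radical or descent arguments changes; the choice-of-direction caveat in the introduction concerns the asymptotic statements (Theorem \ref{thmconv}), not Hua's identity.
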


\subsection{Nilpotent versions}
The category of representations of $Q$ has two natural Serre subcategories (stable under extensions, subobjects and quotients): the category of nilpotent representations (the composition of maps along any sufficiently long path is zero), with associated families of counting functions $M_{Q,\dd}^0(q), I_{Q,\dd}^0(q)$, $A_{Q,\dd}^0(q)$ and the category of $1$-nilpotent representations, that is the category of representations of $Q$ so that the restriction of the representation to a vertex $i\in I$ with $g_i$ loops (we only keep the vector space at vertex $i$ and loops at $i$) gives a nilpotent representation of $S_{g_i}$, the quiver with $g_i$ loops, with associated families of counting functions $M_{Q,\dd}^1(q), I_{Q,\dd}^1(q), A_{Q,\dd}^1(q)$. The equalities \eqref{plethystic} are still true for these new families of functions, whose definitions first appear in \cite{BSV}. In \emph{loc. cit.}, Proposition $1.2$, it is proved that these functions are polynomials in $q$, and moreover that $A_{Q,\dd}^{\flat}(q)$ has integer coefficients for $\flat =0,1$. It was conjectured there (Remark $1.3$ of \emph{op. cit.}) and proved by Ben Davison in \cite[Theorem 7.8]{BenPositivity} that these polynomials have nonnegative coefficients. It is important to note that $A^0_{Q,\dd}(q)$ depends on the orientation of $Q$ while $A_{Q,\dd}^{1}(q)$ does not (\cite[Remark 2.8]{BSV}).

\section{Stabilization property of Kac polynomials}
\subsection{Multi-arrows quivers}
Let $Q=(I,\Omega)$ be a quiver. We write $I=I^{\imm}\sqcup I^{\ree}$, where $I^{\imm}$ is the set of vertices of $Q$ having at least one loop (the imaginary vertices) and $I^{\ree}$ its complement (the real vertices). For what follows, we may assume that it has no multiple arrows. Let $\underline{n}=(n_{\alpha})_{\alpha\in\Omega}\in\N^{\Omega}$. The \emph{multi-arrowed quiver} associated to $Q$ and $\underline{n}$ is the quiver $Q_{\underline{n}}=(I,\Omega_{\underline{n}})$ having the same set of vertices as $Q$ but each arrow $\alpha:i\rightarrow j\in\Omega$ of $Q$ is replaced by $n_{\alpha}$ arrows $\alpha_{l}:i\rightarrow j$, $1\leq l\leq n_{\alpha}$. We give two examples in Section \ref{examples}. For $\dd\in\N^{I}$, we consider the Kac polynomials $A_{Q_{\underline{n}},\dd}(q)\in\N[q]$. We are in the situation where $\dd$ is fixed but $\underline{n}$ will vary, and possibly diverge to infinity.

\subsection{Conjectures}
We make the following conjecture, supported by computations for multiloop quivers $S_g$ and tennis-racket quivers (Section \ref{examples}). Given a nonzero polynomial $P\in Q[q]$, we call \emph{valuation} of $P$ the smallest integer $v$ such that the coefficient of $q^v$ in $P$ is nonzero.
\begin{conj}
\label{conjecture}
For $\underline{n}\in(\N_{\geq 1})^{\Omega}$, the valuation of $A_{Q_{\underline{n}},\dd}(q)$ is $v_{\underline{n}}:=\sum_{i\in I^{\imm}}(1+d_i(\sum_{\alpha:i\rightarrow i}n_{\alpha}-1))$ provided the polynomial is nonzero.
\end{conj}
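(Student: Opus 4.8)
The plan is to read the valuation directly off Hua's formula (Theorem~\ref{formulaHua}) applied to $Q_{\underline n}$. Since $A_{Q_{\underline n},\dd}(q)=(q-1)\Log_{z,q}(H)_{\dd}$ and $q-1$ has valuation $0$ at $q=0$, it suffices to compute the valuation of $\Log_{z,q}(H)_{\dd}$, where $H=\sum_{\pi\in\mathscr P^I}c_\pi(q)z^{|\pi|}$ is the Hua integrand; for a tuple $\pi=(\pi^i)_i$ with $|\pi^i|=d_i$ the summand for $Q_{\underline n}$ is
\[
 c_\pi(q)=\frac{\prod_{\alpha:i\to j}q^{n_\alpha\langle\pi^i,\pi^j\rangle}}{\prod_i q^{\langle\pi^i,\pi^i\rangle}\prod_k\prod_{l=1}^{m_k(\pi^i)}(1-q^{-l})}.
\]
Writing $1-q^{-l}=-q^{-l}(1-q^{l})$ gives $\prod_{l=1}^m(1-q^{-l})=(-1)^m q^{-\binom{m+1}{2}}\prod_{l=1}^m(1-q^l)$, and since $\prod_{l=1}^m(1-q^l)$ is a unit at $q=0$, the valuation of $c_\pi$ at $q=0$ is exactly
\[
 v_\pi=\sum_{\alpha:i\to j}n_\alpha\langle\pi^i,\pi^j\rangle-\sum_i\langle\pi^i,\pi^i\rangle+\sum_i\sum_k\binom{m_k(\pi^i)+1}{2}.
\]

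First I would settle the single-vertex case $S_g$, where $v_\lambda=(g-1)\langle\lambda,\lambda\rangle+\sum_k\binom{m_k(\lambda)+1}{2}$ and $\langle\lambda,\lambda\rangle=\sum_k(\lambda'_k)^2\geq|\lambda|$ with equality if and only if $\lambda=(d)$. This identifies $\lambda=(d)$ as the \emph{unique} minimizer, with $v_{(d)}=1+d(g-1)$. The corrections coming from the plethystic logarithm (products $B_aB_b$ and Adams contributions $\psi_n$) have strictly larger valuation, namely $\geq 2+d(g-1)$ and $\geq n+d(g-1)$ respectively, so they cannot interfere with the extremal term, and one gets $\operatorname{val}A_{S_g,d}=1+d(g-1)$. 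As a consistency check at the opposite extreme, every loop-free $Q$ has $I^{\imm}=\emptyset$, hence $v_{\underline n}=0$; this is precisely the content of Hausel's Theorem~\ref{Hausel}, since $A_{Q,\dd}(0)=\dim_{\C}\g_Q[\dd]>0$ exactly when $\dd$ is a positive root, which is when the polynomial is nonzero.

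For a general quiver I would factor $c_\pi=\prod_i(\text{local }S_{g_i}\text{ factor})\cdot\prod_{\alpha:i\to j,\,i\neq j}q^{n_\alpha\langle\pi^i,\pi^j\rangle}$, expand $\Log(1+\bar H)$ into signed sums of products $B_{\dd^{(1)}}\cdots B_{\dd^{(m)}}$, and try to show that the minimal surviving valuation equals $v_{\underline n}$. The conjectured minimizing configuration takes single rows $\pi^i=(d_i)$ at imaginary vertices (each contributing $1+d_i(g_i-1)$), while splitting the dimension at each real vertex into $d_i$ unit pieces so that their individually positive contributions cancel down to $0$ inside the logarithm, exactly as in the rank-one loop-free computation; the interaction exponents $n_\alpha\langle\pi^i,\pi^j\rangle\geq 0$ only ever raise the valuation. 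Where available, the Weyl-invariance $A_{Q,\dd}=A_{Q,w\dd}$ of Theorem~\ref{Kac} can be used to reduce real vertices before this analysis.

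The main obstacle is that the interaction factors couple the vertices, so the valuation does \emph{not} factor over vertices, and — unlike the single-vertex case — the lowest term of $\Log(H)_{\dd}$ genuinely arises through cancellation: real vertices contribute individually negative $v_\pi$, and the Adams ($n\geq 2$) terms must be shown not to undercut $v_{\underline n}$. What is really needed is a two-sided estimate, a lower bound $\operatorname{val}\Log(H)_{\dd}\geq v_{\underline n}$ that is stable under all cancellations, together with non-vanishing of the coefficient of $q^{v_{\underline n}}$ (whose positivity is then automatic from Theorem~\ref{HLRV}). Controlling these cancellations uniformly in $\underline n$, presumably through a rational-function presentation with an $\underline n$-independent denominator, is the hardest point and the reason the statement is only conjectural here.
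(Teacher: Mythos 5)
This statement is Conjecture \ref{conjecture}: the paper does not prove it, and offers only two things in its support --- the remark immediately following the statement, that the loop-free case follows from Theorems \ref{Kac} and \ref{Hausel} (if $A_{Q_{\underline{n}},\dd}\neq 0$ then $\dd$ is a positive root, so $A_{Q_{\underline{n}},\dd}(0)=\dim_{\C}\mathfrak{g}_{Q_{\underline{n}}}[\dd]\neq 0$), and machine computations for $S_g$ and the tennis-racket quiver in Section \ref{examples}. So there is no paper proof to match yours against, and your attempt should be judged on its own terms. On those terms it is correct as far as it goes, and it goes strictly further than the paper in one respect. Your loop-free reduction is exactly the paper's remark. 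Your single-vertex argument is sound and complete: the valuation of the Hua summand is indeed $(g-1)\langle\lambda,\lambda\rangle+\sum_k\binom{m_k(\lambda)+1}{2}$, since $\prod_{l=1}^m(1-q^{-l})=(-1)^mq^{-\binom{m+1}{2}}\prod_{l=1}^m(1-q^l)$ with the last product a unit at $q=0$; for $\lambda\neq(d)$ one has $\langle\lambda,\lambda\rangle=\sum_k(\lambda'_k)^2\geq d+1$ \emph{and} $\sum_k\binom{m_k(\lambda)+1}{2}\geq 2$, so $(d)$ is the unique minimizer with $v_{(d)}=1+d(g-1)$; and your bounds $m+d(g-1)$ for $m$-fold products and $lk+d(g-1)$ for the $\psi_l$-Adams contributions show the extremal term survives uncancelled with coefficient $\pm 1$. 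That settles the conjecture for all $S_g$, which the paper supports only computationally.

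The gap --- which, to your credit, you name yourself --- is the general multi-vertex case, and it is genuine for exactly the reason you state: the valuation does not localize at vertices. A real vertex carrying $\pi^i=(1^{d_i})$ contributes local valuation $\binom{d_i+1}{2}-d_i^2<0$ for $d_i\geq 2$, so the needed lower bound $\operatorname{val}\geq v_{\underline{n}}$ cannot be read off summand-by-summand from Hua's formula and must survive cancellations inside $\Log$, uniformly in $\underline{n}$; and one must separately show the coefficient of $q^{v_{\underline{n}}}$ does not vanish. Nothing in your sketch delivers either point, and nothing in the paper does: the presentation of Theorem \ref{thmtech} (your ``rational-function presentation with $\underline{n}$-independent denominator'') is established there and controls degrees and limits, but the paper itself leaves the valuation open even with that theorem in hand --- note that in the proof of Theorem \ref{recipr} the paper pins down the affine function $l_j$ matching the \emph{degree}, precisely because the analogous statement for the valuation is unavailable. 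So your proposal correctly disposes of two special cases (one beyond the paper) and leaves the statement exactly as conjectural as the paper does; your concluding diagnosis of where the difficulty sits is accurate.
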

This conjecture is clearly true for loop-free quivers as a consequence of Theorem \ref{Hausel}: if $A_{Q_{\underline{n}},\dd}(q)$ is non-zero, then by Theorem \ref{Kac}, $\dd$ is a positive root of $\mathfrak{g}_{Q_{\underline{n}}}$ hence $A_{Q_{\underline{n}},\dd}(0)=\dim_{\C}(\mathfrak{g}_{Q_{\underline{n}}}[\dd])\neq 0$.

Our interest in this conjecture is the following theorem.

\begin{theorem}
\label{thmconv}
Let $\underline{m}=(m_{\alpha})_{\alpha\in\Omega}\in(\N_{\geq 1}\cup\{+\infty\})^{\Omega}$.
\begin{enumerate}
\item Let $Q$ be a loop-free quiver and $\dd\in\N^I$. Then, for any $\underline{n}\in\N^{\Omega}$ the valuation of $A_{Q_{\underline{n}},\dd}$ is $v_{\underline{n}}=0$ if $A_{Q_{\underline{n}},\dd}\neq 0$. The sequence of polynomials
  \[
   A_{Q_{\underline{n}},\dd}(q)\in\N[q]
  \]
converges in $\N[[q]]$ as $\underline{n}\rightarrow \underline{m}$. Moreover, the limit is the power series expansion at $q=0$ of a rational fraction.

\item Let $Q$ be an arbitrary quiver and $\underline{r}\in\N_{\geq 1}^{\Omega}\setminus\{0\}$. For $s\in \N$,  we let $v_{s\underline{r}}$ be the valuation of the polynomial $A_{Q_{s\underline{r}},\dd}(q)$. Then, the sequence of polynomials
\[
 \frac{A_{Q_{s\underline{r}},\dd}(q)}{q^{v_{s\underline{r}}}}\in\N[q]
\]
converges in $\N[[q]]$ as $s\rightarrow +\infty$ and the limit is the power series expansion at $q=0$ of a rational fraction.
\end{enumerate}

\end{theorem}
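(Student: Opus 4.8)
The plan is to read everything off Hua's formula (Theorem~\ref{formulaHua}) and to isolate the way the combinatorics depends on $\underline{n}$. In the multi-arrowed quiver $Q_{\underline{n}}$ each arrow $\alpha\colon i\to j$ of $Q$ is replaced by $n_\alpha$ parallel arrows, so in the summand attached to a tuple of partitions $\pi=(\pi^i)_{i\in I}\in\mathscr{P}^I$ the numerator $\prod_{i\to j}q^{\langle\pi^i,\pi^j\rangle}$ becomes $\prod_{\alpha\colon i\to j}q^{n_\alpha\langle\pi^i,\pi^j\rangle}$, whereas the denominator $D_\pi(q):=\prod_{i\in I}q^{\langle\pi^i,\pi^i\rangle}\prod_k\prod_{j=1}^{m_k(\pi^i)}(1-q^{-j})$ does not involve $\underline n$ at all. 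Thus the whole dependence on $\underline n$ is concentrated in the monomials $\prod_\alpha q^{n_\alpha\langle\pi^i,\pi^j\rangle}$, whose exponents are nonnegative integers independent of $\underline n$. Set $F_{\underline n}(z,q)=\sum_{\pi\in\mathscr{P}^I}D_\pi(q)^{-1}\big(\prod_{\alpha\colon i\to j}q^{n_\alpha\langle\pi^i,\pi^j\rangle}\big)z^{\lvert\pi\rvert}$, so that $A_{Q_{\underline n},\dd}(q)=(q-1)[z^\dd]\Log_{z,q}F_{\underline n}$.

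First I would unwind the plethystic logarithm. One has $\Log_{z,q}F_{\underline n}=\sum_{k\geq 1}\tfrac{\mu(k)}{k}\,\psi_k(\log F_{\underline n})$, where $\psi_k$ is the $k$-th Adams operation $z^\dd q^a\mapsto z^{k\dd}q^{ka}$ and $\log$ is the ordinary logarithm $\log F_{\underline n}=\sum_{m\geq 1}\tfrac{(-1)^{m-1}}{m}(F_{\underline n}-1)^m$. Extracting $[z^\dd]$ forces $k\mid\dd$, hence finitely many $k$, and involves only finitely many tuples $\pi$ with $\lvert\pi\rvert\leq\dd$ coordinatewise. Tracking how a product of $m$ factors followed by $\psi_k$ affects the exponents (each pairing gets multiplied by $k$ and summed over the $m$ factors), one obtains a \emph{finite} decomposition
\begin{equation}\label{eq:decomp}
 A_{Q_{\underline{n}},\dd}(q)=\sum_{\underline{c}\in C}R_{\underline{c}}(q)\prod_{\alpha\in\Omega}q^{n_\alpha c_\alpha},
\end{equation}
where $C\subset\N^\Omega$ is finite, each $c_\alpha$ is a sum of products $k\langle\pi^i,\pi^j\rangle$ with $k\mid\dd$ and $\lvert\pi\rvert\leq\dd$, and each $R_{\underline c}(q)$ is a rational function independent of $\underline n$, built from the $D_\pi(q^k)$, so that its nonzero poles are roots of unity. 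The monomials $\prod_\alpha q^{n_\alpha c_\alpha}$ carry all the $\underline n$-dependence.

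Granting \eqref{eq:decomp}, both statements become soft analysis in the $(q)$-adic topology of $\N[[q]]$. For part (1), $Q_{\underline n}$ is loop-free, so by Theorem~\ref{Kac} a nonzero $A_{Q_{\underline n},\dd}$ forces $\dd$ to be a positive root of $\gf_{Q_{\underline n}}$, whence Theorem~\ref{Hausel} gives $A_{Q_{\underline n},\dd}(0)=\dim_\C\gf_{Q_{\underline n}}[\dd]\neq 0$ and $v_{\underline n}=0$. As $\underline n\to\underline m$, a monomial $\prod_\alpha q^{n_\alpha c_\alpha}$ tends to $0$ $(q)$-adically as soon as $c_\alpha>0$ for some $\alpha$ with $m_\alpha=+\infty$, and otherwise converges to $\prod_{\alpha:m_\alpha<\infty}q^{m_\alpha c_\alpha}$; hence $A_{Q_{\underline n},\dd}$ converges to the sum of those terms of \eqref{eq:decomp} with $c_\alpha=0$ for all $\alpha$ with $m_\alpha=+\infty$, a finite sum of rational functions, i.e.\ the expansion at $q=0$ of a rational fraction, lying in $\N[[q]]$ by positivity (Theorem~\ref{HLRV}). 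For part (2) put $n_\alpha=s r_\alpha$ and regroup \eqref{eq:decomp} by the value $\ell=\sum_\alpha r_\alpha c_\alpha$ of the total exponent: $A_{Q_{s\underline r},\dd}(q)=\sum_{\ell\geq 0}T_\ell(q)q^{s\ell}$ with finitely many nonzero $T_\ell$, each rational and independent of $s$. Let $\ell^\ast$ be the least $\ell$ with $T_\ell\neq 0$. For $s$ large enough the term $T_{\ell^\ast}(q)q^{s\ell^\ast}$ has strictly smallest $q$-order, so $v_{s\underline r}=s\ell^\ast+\operatorname{val}(T_{\ell^\ast})$ is affine in $s$ (the announced linear rate), and $q^{-v_{s\underline r}}A_{Q_{s\underline r},\dd}(q)=q^{-\operatorname{val}(T_{\ell^\ast})}T_{\ell^\ast}(q)+\sum_{\ell>\ell^\ast}q^{-\operatorname{val}(T_{\ell^\ast})}T_\ell(q)q^{s(\ell-\ell^\ast)}$, whose tail tends to $0$ $(q)$-adically; the limit is again the expansion at $q=0$ of a rational fraction, in $\N[[q]]$.

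The main obstacle is establishing \eqref{eq:decomp} rigorously: one must control the plethystic logarithm well enough to guarantee that (i) for fixed $\dd$ only finitely many tuples $\pi$ and Adams operations $\psi_k$ contribute, (ii) all of the $\underline n$-dependence factors out cleanly into the monomials $\prod_\alpha q^{n_\alpha c_\alpha}$ with exponents independent of $\underline n$, and (iii) the coefficients $R_{\underline c}(q)$ have only cyclotomic poles. Steps (i)--(ii) are bookkeeping once one observes that raising a summand to the $m$-th power adds exponents while $\psi_k$ multiplies them by the bounded integer $k$; (iii) is automatic since the only denominators produced are the $D_\pi(q^k)$. Everything downstream of \eqref{eq:decomp} is elementary, so the entire difficulty is concentrated in this separation of variables. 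I would finally note that \eqref{eq:decomp}, after clearing the common cyclotomic denominator of the $R_{\underline c}$, is exactly the canonical presentation of $A_{Q_{\underline n},\dd}$ as a quotient of two polynomials with a fixed, $\underline n$-independent denominator whose roots are roots of unity, as advertised in the introduction.
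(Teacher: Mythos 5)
Your proposal is correct and follows essentially the same route as the paper: your separation-of-variables decomposition $A_{Q_{\underline{n}},\dd}(q)=\sum_{\underline{c}}R_{\underline{c}}(q)\prod_\alpha q^{n_\alpha c_\alpha}$ is precisely the content of Theorem~\ref{thmtech}, extracted exactly as there from Hua's formula (Theorem~\ref{formulaHua}) by controlling the plethystic logarithm, and your $(q)$-adic limit arguments then mirror the paper's deduction of Theorem~\ref{thmconv}, including the use of Theorems~\ref{Kac} and~\ref{Hausel} for $v_{\underline{n}}=0$ in the loop-free case. If anything, your treatment of part (2) is more careful than the paper's: by grouping terms according to the slope $\ell=\sum_\alpha r_\alpha c_\alpha$, taking the least $\ell^\ast$ with nonvanishing grouped coefficient $T_{\ell^\ast}$, and computing $v_{s\underline{r}}=s\ell^\ast+\operatorname{val}(T_{\ell^\ast})$ for large $s$, you correctly cover the situation (e.g.\ multi-loop quivers) where no exponent is constant along the direction $\underline{r}$ and the valuation grows linearly, whereas the paper's displayed limit formula, a sum over the constant $l'_j$ only, does not literally handle the case where that set is empty.
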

In fact, we can prove the following stronger result concerning the structure of Kac polynomials.
\begin{theorem}
\label{thmtech}
 Let $\dd\in \N^I$. Then there exists $r\geq 1$, polynomials with integer coefficients $Q,P_0,\hdots,P_r$ such that the complex roots of $Q$ are roots of unity and affine linear functions $l_0,l_1,\hdots,l_r:\Z^{\Omega}\rightarrow \Z$ whose linear parts $\tilde{l}_j$ have nonnegative coefficients when written in the canonical basis of $\Z^{\Omega}$ (that is, $\tilde{l}_j(e_{\alpha})\geq 0$ for any $0\leq j\leq r$ and $\alpha\in \Omega$) and are pairwise distinct, such that $A_{Q_{\underline{n}},\dd}(q)$ can be written as the rational fraction
 \[
  A_{Q_{\underline{n}},\dd}(q)=\frac{\sum_{j=0}^rq^{l_j(\underline{n})}P_j(q)}{Q(q)}
 \]
for any $\underline{n}\in\N^{\Omega}$. Moreover, if we impose $\gcd(Q,P_0,\hdots,P_r)=1$, $Q$ monic and $P_j(0)\neq 0$ for any $0\leq j\leq r$, the polynomials $Q,P_j$ and the $l_j$ are unique up to reordering.
\end{theorem}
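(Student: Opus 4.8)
My plan is to read the entire statement off Hua's formula (Theorem~\ref{formulaHua}) applied to the family $Q_{\underline n}$, tracking precisely how the right-hand side depends on $\underline n$. Passing from $Q$ to $Q_{\underline n}$ replaces each arrow $\alpha\colon i\to j$ of $\Omega$ by $n_\alpha$ parallel arrows, so in the inner series the only change is that the numerator factor $\prod_{i\to j}q^{\langle\pi^i,\pi^j\rangle}$ becomes $\prod_{\alpha\colon i\to j}q^{n_\alpha\langle\pi^i,\pi^j\rangle}$. Thus the whole $\underline n$-dependence of
\[
\mathcal H_{\underline n}(z,q)=\sum_{\pi\in\mathscr{P}^I}C_\pi(q)\Bigl(\prod_{\alpha\colon i\to j}q^{n_\alpha\langle\pi^i,\pi^j\rangle}\Bigr)z^{\lvert\pi\rvert},\qquad C_\pi(q)=\Bigl(\prod_{i\in I}q^{\langle\pi^i,\pi^i\rangle}\prod_k\prod_{j=1}^{m_k(\pi^i)}(1-q^{-j})\Bigr)^{-1},
\]
sits in the monomials $q^{n_\alpha\langle\pi^i,\pi^j\rangle}$. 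Two facts will drive everything: writing $1-q^{-j}=-q^{-j}(1-q^{j})$ shows each $C_\pi(q)$ is a fixed power $q^{c_\pi}$ divided by a product of factors $1-q^{j}$ whose complex roots are roots of unity; and the partition pairing satisfies $\langle\pi^i,\pi^j\rangle=\sum_{a,b}\min(a,b)m_a(\pi^i)m_b(\pi^j)\ge 0$, which will supply the nonnegativity of the linear parts.

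For \emph{existence} I would fix $\dd$ and extract the coefficient of $z^{\dd}$ from $(q-1)\Log_{z,q}(\mathcal H_{\underline n})$. Since the empty tuple gives $\mathcal H_{\underline n}=1+O(z)$, I would use
\[
\Log_{z,q}(\mathcal H_{\underline n})=\sum_{m\ge1}\frac{\mu(m)}{m}\,\psi_m\bigl(\log\mathcal H_{\underline n}\bigr),\qquad \log(1+u)=\sum_{k\ge1}\frac{(-1)^{k-1}}{k}u^{k},
\]
where $\psi_m$ is the Adams operation $q\mapsto q^m$, $z^{\ee}\mapsto z^{m\ee}$. Because $\mathcal H_{\underline n}-1$ has $z$-valuation $\ge1$, only finitely many $m$ (those with $m\mid\dd$), finitely many $k$, hence finitely many tuples $\pi$ contribute to $z^{\dd}$. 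Each contribution is a rational-number multiple of a product of terms $\psi_m\bigl(C_\pi(q)\prod_\alpha q^{n_\alpha\langle\cdots\rangle}\bigr)=C_\pi(q^m)\prod_\alpha q^{mn_\alpha\langle\cdots\rangle}$; since $\psi_m$ is a ring homomorphism, one $m$ governs a whole degree-$k$ term, so the $\underline n$-part of the $q$-exponent is $\sum_\alpha n_\alpha\cdot m\sum_{s}\langle\cdots\rangle$, an affine function with nonnegative integer linear part, while the rest is a power of $q$ over a product of factors $1-q^{mj}$ (again roots of unity). Collecting the finitely many contributions, grouping by linear part, multiplying by $(q-1)$, and clearing denominators by a monic $Q(q)$ built from the $1-q^{mj}$ yields $A_{Q_{\underline n},\dd}(q)=\sum_\nu R_\nu(q)\,q^{\lambda_\nu(\underline n)}$ and then the desired shape; dividing each grouped numerator by its largest power of $q$ and absorbing it into the constant term of $l_j$ enforces $P_j(0)\neq0$ with pairwise distinct linear parts. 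Integrality of the $P_j$ I would get from $A_{Q_{\underline n},\dd}\in\Z[q]$ (Theorem~\ref{Kac}): choosing a generic ray $\underline n=\underline n_0+s\underline r$ makes the slopes $\tilde l_j(\underline r)$ distinct, so for large $s$ the summands occupy disjoint $q$-adic bands and each $q^{l_j}P_j$ is read off as an integer-coefficient combination of the values $Q(q)A_{Q_{\underline n},\dd}(q)\in\Z[q]$.

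For \emph{uniqueness} the key point is that the functions $\underline n\mapsto q^{\tilde l_j(\underline n)}=\prod_\alpha\bigl(q^{\tilde l_j(e_\alpha)}\bigr)^{n_\alpha}$ attached to distinct linear parts are distinct characters $\N^{\Omega}\to\Q(q)^\times$, hence $\Q(q)$-linearly independent. Given two representations, cross-multiplying and comparing characters forces the two families of linear parts to coincide and gives $Q_2\,q^{c_j^{(1)}}P_j^{(1)}=Q_1\,q^{c_k^{(2)}}P_k^{(2)}$ for matched indices. As the roots of $Q_1$ are roots of unity, $q\nmid Q_1$, so $Q_1\mid Q_2P_j^{(1)}$ for all $j$; picking for each irreducible factor of $Q_1$ an index $j$ with $P_j^{(1)}$ not divisible by it (possible since $\gcd(Q_1,P_0^{(1)},\dots,P_r^{(1)})=1$) gives $Q_1\mid Q_2$, symmetrically $Q_2\mid Q_1$, and monicity forces $Q_1=Q_2$. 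Then $q^{c_j^{(1)}}P_j^{(1)}=q^{c_k^{(2)}}P_k^{(2)}$, and $P_j^{(1)}(0),P_k^{(2)}(0)\neq0$ equate the $q$-adic valuations, giving $c_j^{(1)}=c_k^{(2)}$, $P_j^{(1)}=P_k^{(2)}$, and hence $l_j^{(1)}=l_k^{(2)}$: uniqueness up to reordering.

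I expect the \emph{main obstacle} to be purely technical rather than conceptual: once one sees that $\underline n$ enters only through $q^{n_\alpha\langle\pi^i,\pi^j\rangle}$, the real care goes into the plethystic-logarithm bookkeeping, namely verifying that the Adams operations preserve both the affine-linear shape of the $q$-exponents (they merely rescale slopes by $m$) and the roots-of-unity support of the denominators (they send $1-q^{j}$ to $1-q^{mj}$), and that the grouping plus $q$-adic band-separation argument really upgrades the numerators to $\Z[q]$. The uniqueness is then a clean consequence of linear independence of characters together with the coprimality normalization.
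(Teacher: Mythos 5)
Your proposal is correct, and its existence half is the same route as the paper's: extract the coefficient of $z^{\dd}$ from Hua's formula, observe that $\underline n$ enters only through the monomials $q^{n_\alpha\langle\pi^i,\pi^j\rangle}$, and note that the Adams operations $\psi_m$ rescale the (nonnegative, by $\langle\lambda,\mu\rangle\geq 0$) slopes by $m$ and send the denominator factors $1-q^{j}$ to $1-q^{mj}$, keeping all roots on the unit circle. You go beyond the paper in two places, both to your credit. First, the paper's proof never addresses why the grouped numerators $P_j$ have \emph{integer} coefficients, even though the plethystic-logarithm expansion introduces rational factors $\mu(m)/m$ and $1/k$; your argument---pick a ray $\underline n=\underline n_0+s\underline r$ along which the linear parts $\tilde l_j(\underline r)$ are pairwise distinct, so that for $s\gg 0$ the summands $q^{l_j(\underline n)}P_j(q)$ occupy disjoint ranges of $q$-degrees inside $Q(q)A_{Q_{\underline n},\dd}(q)\in\Z[q]$ and can be read off coefficient by coefficient---genuinely fills this gap. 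Second, for uniqueness the paper compares the asymptotic behaviour of degrees as $\underline n$ varies to match the two families $\{l_j\}$ and $\{l'_j\}$, whereas you invoke Dedekind independence of the distinct characters $\underline n\mapsto q^{\tilde l_j(\underline n)}$ of the monoid $\N^{\Omega}$ with values in $\Q(q)^{\times}$; this is cleaner and also handles the matching of constant terms automatically via the valuation comparison $c_j^{(1)}=c_k^{(2)}$ (using $P_j(0)\neq 0$ and $Q(0)\neq 0$, the latter because the roots of $Q$ are roots of unity). Your subsequent divisibility step, isolating each irreducible factor $\phi^{e}$ of $Q_1$ and choosing $j$ with $\phi\nmid P_j^{(1)}$ via the coprimality normalization, is exactly the paper's concluding argument. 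In short: same skeleton, a different (and slightly sharper) key lemma for uniqueness, and a worthwhile explicit justification of integrality that the paper leaves implicit.
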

\begin{remark}
\begin{enumerate}
 \item The reader should be tempted to speculate on the expression given by Theorem \ref{thmtech}: is there any (maybe geometric, see Section \ref{geominterp}) interpretation of the affine linear functions $l_j$ or of the polynomials $P_j$? The appearance of $q^{l_j(\underline{n})}$ could possibly be a clue for a hidden affine fibration of rank $l_j(\underline{n})$. At the moment, the author does not have any reasonable conjecture.
 \item Knowing that Kac polynomials can be written under the form given by Theorem \ref{thmtech} is very powerful to give closed formulas for them using computations, see Section \ref{examples}.
 \item The decomposition of Kac polynomials given by this theorem seems related to the refined Kac polynomials defined by Rodriguez-Villegas in \cite{2011arXiv1102.5308R}. However, the formula of Theorem \ref{thmtech} is really a sum of rational fractions and not of polynomials.
 \end{enumerate}
\end{remark}

\begin{proof}[Proof of Theorem \ref{thmconv}]
 We assume Theorem \ref{thmtech}, whose proof will be given below. We prove the first point. Let $\dd\in\N^I$. For $\underline{n}\in\N^{\Omega}$, we write $A_{Q_{\underline{n}},\dd}(q)$ as in Theorem \ref{thmtech}. By the remark preceding Theorem \ref{thmconv}, since $Q$ has no loops, $v_{\underline{n}}=0$ for any $\underline{n}\in\N_{\geq 1}^{\Omega}$ (if $A_{Q_{\underline{n}},\dd}(q)\neq 0$). Therefore, one of the $l_j$'s is the zero affine function $\Z^{\Omega}\rightarrow \Z$. Assume that $l_0=0$. Let $\underline{m}\in(\N_{\geq 1}\cup\{+\infty\})^{\Omega}$. Let $J'=\{1\leq j\leq r\mid \lim_{\underline{n}\rightarrow\underline{m}}\limits l_j(\underline{n})=+\infty\}$ and $J=\{1,\hdots,r\}\setminus J'$. For $j\in J$, $l_j(\underline{n})$ is a sequence of integers which stabilizes as $\underline{n}\rightarrow \underline{m}$ (by the positivity assumption on the coefficients of the linear forms $l_j$, $0\leq j\leq r$). We let $l_{j}(\underline{m})$ be its limit. In particular, $0\in J$ and $l_0(\underline{m})=0$. Then,
 \[
  \lim_{\underline{n}\rightarrow \underline{m}}\limits \frac{A_{Q_{\underline{n}},\dd}(q)}{q^{v_{\underline{n}}}}=\frac{\sum_{j\in J}q^{l_j(\underline{m})}P_j(q)}{Q(q)}.
 \]
In particular, for $\underline{m}=(+\infty,\hdots,+\infty)$, $J=\{0\}$ and so
\[
  \lim_{\underline{n}\rightarrow \underline{m}}\limits \frac{A_{Q_{\underline{n}},\dd}(q)}{q^{v_{\underline{n}}}}=\frac{P_0(q)}{Q(q)}.
 \]

For the second point, let $Q$ be an arbitrary quiver and $\underline{r}\in\N_{\geq 1}^{\Omega}$. Write $A_{Q_{\underline{n}},\dd}(q)$ as in Theorem \ref{thmtech}. We obtain affine functions
\[
 \begin{matrix}
  l'_j &:&\Z&\rightarrow& \Z\\
  &&s&\mapsto&l_j(s\underline{r})
 \end{matrix}
\]
for $0\leq j\leq r$. Let $J=\{1\leq j\leq r\mid l'_j \text{ is constant}\}$. For $j\in J$, we let $l'_j(\infty)$ be the unique value taken by $l'_j$. Then,
\[
 \lim_{s\rightarrow +\infty}\frac{A_{Q_{s\underline{r}},\dd}}{q^{v_{s\underline{r}}}}=\frac{\sum_{j\in J}q^{l'_j(\infty)}P_j(q)}{Q(q)}.
\]
This proves the second point of the theorem.
\end{proof}
In the second point of Theorem \ref{thmconv}, we need to choose a direction $\underline{r}$ since it is not clear that the set $J$ in the proof is independent of the direction (and the limit clearly depends on $J$). Nethertheless, we conjecture it is the case. If this conjecture is true, the limit is independent of the given direction $\underline{r}$. This conjecture is therefore equivalent to the following one.
\begin{conj}
 Let $Q$ be a quiver, $\dd\in\N^I$ and $v_{\underline{n}}$ be the valuation of $A_{Q_{\underline{n}},\dd}(q)$. Then, the sequence of polynomials
 \[
  \frac{A_{Q_{\underline{n},\dd}}(q)}{q^{v_{\underline{n}}}}\in\N[q]
 \]
converges in $\N[[q]]$ as $\underline{n}\rightarrow (+\infty,\hdots,+\infty)$ and the limit is the power series expansion at $0$ of a rational fraction.
\end{conj}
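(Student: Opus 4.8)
Assuming Theorem \ref{thmtech}, I would first recast the statement in terms of the affine forms $l_j$. Writing $A_{Q_{\underline{n}},\dd}(q)=\left(\sum_{j=0}^r q^{l_j(\underline{n})}P_j(q)\right)/Q(q)$ as there, the hypothesis that the complex roots of $Q$ are roots of unity gives $Q(0)\neq 0$; hence the valuation $v_{\underline{n}}$ of $A_{Q_{\underline{n}},\dd}$ equals that of the numerator $N_{\underline{n}}(q):=\sum_j q^{l_j(\underline{n})}P_j(q)$, and since $P_j(0)\neq 0$ the summand $q^{l_j(\underline{n})}P_j(q)$ has valuation exactly $l_j(\underline{n})$. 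Convergence of $A_{Q_{\underline{n}},\dd}(q)/q^{v_{\underline{n}}}$ in $\N[[q]]$ as $\underline{n}\to(+\infty,\dots,+\infty)$ then amounts to the coefficientwise stabilization of $N_{\underline{n}}(q)/q^{v_{\underline{n}}}$ along every net in $\N^{\Omega}$ tending to $(+\infty,\dots,+\infty)$.

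The plan is to reduce the conjecture to a property of the linear parts $\tilde{l}_0,\dots,\tilde{l}_r$, which by Theorem \ref{thmtech} lie in $\N^{\Omega}$ and are pairwise distinct. Equip $\N^{\Omega}$ with the componentwise order. I claim the conjecture holds as soon as $\{\tilde{l}_0,\dots,\tilde{l}_r\}$ possesses a minimum $\tilde{l}_{j^*}$ for this order. Indeed, for any $j\neq j^*$ one then has $\tilde{l}_{j^*}\lneq\tilde{l}_j$, so $l_j(\underline{n})-l_{j^*}(\underline{n})\to+\infty$ when all coordinates of $\underline{n}$ grow; thus for large $\underline{n}$ the index $j^*$ strictly dominates, $v_{\underline{n}}=l_{j^*}(\underline{n})$, and
\[
\frac{A_{Q_{\underline{n}},\dd}(q)}{q^{v_{\underline{n}}}}=\frac{\sum_{j=0}^r q^{\,l_j(\underline{n})-l_{j^*}(\underline{n})}P_j(q)}{Q(q)}\longrightarrow\frac{P_{j^*}(q)}{Q(q)}
\]
independently of the net, the limit being a rational fraction with $P_{j^*}(0)Q(0)\neq 0$. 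Conversely, two incomparable minimal elements $\tilde{l}_a,\tilde{l}_b$ would make the dominant term, and hence the limit, differ between a direction favouring $a$ and one favouring $b$; so the conjecture is in fact equivalent to the existence of a componentwise minimum among the $\tilde{l}_j$.

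To produce this minimum I would read off the bottom of the $q$-expansion directly from Hua's formula (Theorem \ref{formulaHua}) for $Q_{\underline{n}}$, in which a loop $\alpha:i\to i$ of multiplicity $n_{\alpha}$ multiplies the summand indexed by $\pi=(\pi^i)_{i\in I}$ by $q^{n_{\alpha}\langle\pi^i,\pi^i\rangle}$. Because $\langle\lambda,\lambda\rangle=\sum_k(\lambda'_k)^2$, with $\lambda'$ the conjugate partition, is minimized over partitions of $\dd_i$ exactly at the single part $\lambda=(\dd_i)$, where it equals $\dd_i$, the lowest loop exponents come from the configurations with $\pi^i=(\dd_i)$ at every imaginary vertex, and their total loop exponent has linear part $\sum_{i\in I^{\imm}}\dd_i\sum_{\alpha:i\to i}n_{\alpha}$ in $\underline{n}$ — a single linear form, matching the linear part of Conjecture \ref{conjecture}. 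The proposal is to show that this configuration governs the valuation and the finitely many coefficients above it for all large $\underline{n}$, so that $\tilde{l}_{j^*}(e_{\alpha})=\dd_i$ for a loop $\alpha:i\to i$ at an imaginary vertex and $0$ otherwise, with every other $\tilde{l}_j$ dominating it componentwise — the inequality $\tilde{l}_j(e_{\alpha})\geq\dd_i$ expressing that one extra loop at $i$ raises every exponent $l_j$ by at least $\dd_i$.

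The main obstacle is the passage through the plethystic logarithm $\Log_{z,q}$. The dominant configuration is transparent in the pre-logarithmic series, but the valuation and the next coefficients of $A_{Q_{\underline{n}},\dd}$ are assembled from products of lower-dimensional summands, and the difficulty is to rule out that some combination of subleading configurations contributes, in a skew direction of $\underline{n}$, an exponent incomparable to $\tilde{l}_{j^*}$; in the combinatorial reformulation this is exactly the exclusion of a second minimal linear part, equivalently the statement that the $q$-adic valuation is an honest affine function of $\underline{n}$ rather than merely a concave piecewise-affine one. This is the crux of Conjecture \ref{conjecture} — of which the present statement requires only the linear part, and only for large $\underline{n}$ — and I expect it to need a genuinely new ingredient: either a direction-uniform non-cancellation estimate for the dominant stratum of Hua's formula, or a cohomological interpretation of the bottom of the cohomology of the relevant Nakajima quiver varieties (\cite{MR3034296}), through Davison's Donaldson--Thomas description (\cite{MR3826830}), that visibly depends on the single form $\sum_{i\in I^{\imm}}\dd_i\sum_{\alpha:i\to i}n_{\alpha}$.
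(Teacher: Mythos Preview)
The statement you are addressing is a \emph{conjecture} in the paper, not a theorem: the paper offers no proof and explicitly leaves it open. Immediately before stating it, the author remarks that in the proof of Theorem~\ref{thmconv}(2) one must fix a direction $\underline{r}$ because ``it is not clear that the set $J$ in the proof is independent of the direction,'' and then presents the present statement as an equivalent reformulation of that direction-independence. So there is no proof in the paper to compare your proposal against.

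Your reduction is correct and aligns with the paper's own discussion: the conjecture follows once the linear parts $\tilde{l}_0,\dots,\tilde{l}_r$ of Theorem~\ref{thmtech} admit a componentwise minimum $\tilde{l}_{j^*}$, and your argument for that implication is sound. Your identification of the expected minimum---linear part $\sum_{i\in I^{\imm}}\dd_i\sum_{\alpha:i\to i}n_\alpha$, coming from the one-row configurations $\pi^i=(\dd_i)$ in Hua's formula---agrees exactly with the linear part of Conjecture~\ref{conjecture}. You are also right that the obstruction is the plethystic logarithm: tracking the bottom exponents through $\Log_{z,q}$ and ruling out an incomparable minimal linear form arising from products of lower-dimensional strata is precisely what the paper does not know how to do. Your proposal is thus an honest outline that locates the gap rather than closes it, and it is the same gap the paper leaves open.

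Two small caveats. First, your asserted \emph{equivalence} is not fully justified in the converse direction: two incomparable minimal $\tilde{l}_a,\tilde{l}_b$ do produce different dominant indices along different rays, but nothing forbids $P_a/Q$ and $P_b/Q$ from coinciding, so absence of a minimum does not automatically preclude convergence; this does not affect the implication you actually need. Second, your claim that $\tilde{l}_{j^*}(e_\alpha)=0$ for every non-loop arrow $\alpha$ is itself nontrivial---in Hua's sum every arrow $\alpha:i\to j$ contributes a strictly positive slope $\langle\pi^i,\pi^j\rangle$ whenever both partitions are nonempty, and a term with vanishing non-loop slopes only materializes after passing through $\Log_{z,q}$ (in the loop-free case this is supplied externally by Hausel's theorem, not read off from Hua). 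So the step you single out as needing a ``genuinely new ingredient'' is indeed the crux, and it remains open.
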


\begin{proof}[Proof of Theorem \ref{thmtech}]
 We take advantage of the explicit formula of Theorem \ref{formulaHua}. First recall the formula for the plethystic logarithm (\cite[Section 1.5]{MR3968896}):
 \[
 \Log_{q,z}:1+\Q[[q,z_i:i\in I]]^+\rightarrow \Q[[q,z_i:i\in I]]
\]
 where the $+$ upperscript denotes the augmentation ideal, the ideal of $\Q[[q,z_i:i\in I]]$ generated by $q,z_i, i\in I$. For $f\in \Q[[q,z_i:i\in I]]^+$,
 \[
  \Log_{q,z}(1+f)=\sum_{l\geq 1}\frac{\mu(l)}{l}\psi_l\left(\sum_{k\geq 1}\frac{(-1)^{k+1}f^k}{k}\right),
 \]
where $\mu$ is Moebius function, and for $g=g(q,z_i:i\in I)\in \Q[[q,z_i,i\in I]]$, $\psi_l(g)=g(q^l,z_i^l:i\in I)$. For $\underline{n}\in\N^{\Omega}$, we let
\[
 f_{\underline{n}}=\sum_{\pi=(\pi^i)_{i\in I}\in\mathscr{P}'^I}\frac{\prod_{i\xrightarrow{\alpha} j\in\Omega}q^{n_{\alpha}\langle\pi^i,\pi^j\rangle}}{\prod_{i\in I}q^{\langle\pi^i,\pi^i\rangle}\prod_k\prod_{j=1}^{m_k(\pi^i)}(1-q^{-j})}z^{\lvert\pi\rvert}
\]
where $\mathscr{P}'$ is the set of partitions of integers not including the trivial partition $\emptyset$ of $0$. Then, by Theorem \ref{formulaHua}, we have
\[
 \sum_{\dd\in\N^I}A_{Q_{\underline{n}},\dd}(q)z^{\dd}=(q-1)\Log(1+f_{\underline{n}}).
\]
From this, we see that for fixed dimension vector $\dd\in\N^I$, the Kac polynomial $A_{Q_{\underline{n}},\dd}(q)$ can be written as a finite sum of rational fractions whose denominators are products of terms of the form $q^a(1-q^{-b})$ and whose numerators are finite linear combinations of powers of $q$ where the exponents are degree one polynomials in the $n_{\alpha}$, $\alpha\in\Omega$. This proves that one can write Kac polynomials as in Theorem \ref{thmtech}. Since for any partitions $\pi,\pi'$, the pairing $\langle\pi,\pi'\rangle$ is a nonnegative integer, this implies the nonnegativity of the coefficients of the linear part of the affine linear functions $l_j$, $0\leq j\leq r$.

For the unicity, assume that for any $\underline{n}\in\N^I$ one has
\[
 \frac{\sum_{j=0}^rq^{l_j(\underline{n})}P_j(q)}{Q(q)}=\frac{\sum_{j=0}^sq^{l'_j(\underline{n})}P'_j(q)}{Q'(q)}
\]
for some affine linear functions $l_0,\hdots,l_r,l'_0,\hdots,l'_s$ and polynomials $P_0,\hdots,P_r,P'_0,\hdots,P'_s$ verifying the assumptions given in Theorem \ref{thmtech}. Then,
\[
 \sum_{j=0}^rq^{l_j(\underline{n})}P_j(q)Q'(q)=\sum_{j=0}^sq^{l'_j(\underline{n})}P'_j(q)Q(q).
\]
By comparing the behaviour of the degrees as $\underline{n}$ varies, we see that $r=s$, the sets $\{l_0,\hdots,l_r\}$ and $\{l'_0,\hdots,l'_s\}$ coincide and for $j,j'$ such that $l_j=l_{j'}$
\[
 P_j(q)Q'(q)=P'_{j'}(q)Q(q).
\]
Up to reordering, we can assume that $l_j=l'_j$ for $0\leq j\leq r$. Since $\gcd(P_0,\hdots,P_r,Q)=1$, this implies that $Q$ divides $Q'$ and conversely that $Q'$ divides $Q$. Therefore, $Q=Q'$ and $P_j=P'_j$.
\end{proof}

By Theorem \ref{thmconv}, to a set of vertices $I$ we can associate a family of power series $A_{I,\dd}(q)\in\N[[q]]$ as follows. Let $Q=(I,\Omega)$ be a quiver given by an arbitrary orientation of the complete graph on the set of vertices $I$. We let $\underline{\infty}=(+\infty,\hdots,+\infty)\in(\N\cup\{+\infty\})^{\Omega}$ and for $\dd\in\N^I$,
\[
 A_{I,\dd}(q)=\lim_{\underline{n}\rightarrow\underline{\infty}}A_{Q_{\underline{n}},\dd}(q).
\]
As Kac polynomials do not depend on the orientation of $Q$, this family of power series only depends on the cardinality of $I$.  For such quivers, the support of any dimension vector is connected (since any two vertices are connected). Therefore, an easy computation (one checks that $\langle\dd,\dd\rangle\leq 0$ for $\underline{n}$ big enough where $\langle-,-\rangle$ denotes the Euler form defined in Section \ref{quivrep}) shows that any $\dd\in\N^{I}\setminus \{0,e_i, i\in I\}$ (where $e_i$, $i\in I$ denotes the canonical basis of $\N^I$ as a monoid) is an imaginary root of $Q_{\underline{n}}$ for $\underline{n}$ big enough (see \cite[Lemma 5.3]{MR1104219}). So $A_{Q_{\underline{n}},\dd}(q)\neq 0$ for $\underline{n}\gg 0$. By letting $\underline{n}\rightarrow \underline{\infty}$, we consequently get a non-zero power series $A_{Q,\dd}(q)$ for any $\dd\in\N^I$. It is also possible to imagine lots of variants using for example the same process for a quiver having exactly one arrow between any two vertices and one loop at each vertex.

\begin{remark}
 Conjecture \ref{conjecture} can also be formulated for the polynomials $A_{Q_{\underline{n}},\dd}^{\flat}(q)$ for $\flat =0,1$. We conjecture that the valuation of $A_{Q_{\underline{n}},\dd}^{\flat}(q)$ is $0$.
\end{remark}

\begin{theorem}
\label{recipr}
 Let $\underline{m}=(m_{\alpha})_{\alpha\in\Omega}\in(\N_{\geq 1}\cup\{+\infty\})^{\Omega}$. The sequence of polynomials
 \[
  q^{\deg A_{Q_{\underline{n}},\dd}(q)}A_{Q_{\underline{n}},\dd}(q^{-1})
 \]
converges in $\N[[q]]$ as $\underline{n}\rightarrow \underline{m}$.
\end{theorem}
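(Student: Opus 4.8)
The plan is to reduce the statement to Theorem \ref{thmtech} and then run the convergence argument of the proof of Theorem \ref{thmconv} on the reciprocal polynomial. For a nonzero polynomial $P$ write $\widehat{P}(q)=q^{\deg P}P(q^{-1})$ for its reciprocal; if $P(0)\neq 0$ then $\deg\widehat{P}=\deg P$ and $\widehat{P}(0)$ equals the leading coefficient of $P$. By Theorem \ref{Kac}, whenever $A_{Q_{\underline{n}},\dd}\neq 0$ (the zero case being trivial, as the reciprocal is then $0$) its degree is the affine function
\[
 D(\underline{n}):=1-\langle\dd,\dd\rangle=1-\sum_{i\in I}\dd_i^2+\sum_{\alpha:i\to j}n_\alpha\,\dd_i\dd_j,
\]
whose linear part, for an arrow $\alpha\colon i\to j$, has $n_\alpha$-coefficient $\dd_i\dd_j\geq 0$. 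Writing $A_{Q_{\underline{n}},\dd}(q)=\sum_{j=0}^r q^{l_j(\underline{n})}P_j(q)/Q(q)$ as in Theorem \ref{thmtech}, substituting $q\mapsto q^{-1}$ and multiplying by $q^{D(\underline{n})}$ yields
\[
 q^{D(\underline{n})}A_{Q_{\underline{n}},\dd}(q^{-1})=\frac{\sum_{j=0}^r q^{\lambda_j(\underline{n})}\widehat{P}_j(q)}{\widehat{Q}(q)},\qquad \lambda_j(\underline{n}):=D(\underline{n})-l_j(\underline{n})-\deg P_j+\deg Q.
\]
The $\lambda_j$ are affine; since $Q$ is monic with roots among the roots of unity we have $Q(0)\neq 0$, so $\widehat{Q}(0)=1$, $\deg\widehat{Q}=\deg Q$, and the roots of $\widehat{Q}$ are the inverses of the roots of $Q$, hence again roots of unity; moreover $\widehat{P}_j(0)\neq 0$ because $P_j(0)\neq 0$.

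The heart of the matter is to show that the linear part of each $\lambda_j$ is nonnegative, i.e.\ that for an arrow $\alpha\colon i\to j$ the $n_\alpha$-coefficient of $l_j$ is at most $\dd_i\dd_j$, the $n_\alpha$-coefficient of $D$. For this I would track the exponents of $q$ through Hua's formula (Theorem \ref{formulaHua}) and the plethystic logarithm. Every monomial contributing to the coefficient of $z^{\dd}$ in $\Log(1+f_{\underline{n}})$ comes from a choice of $l\geq 1$, $k\geq 1$ and a $k$-tuple $(\pi_t)_{1\leq t\leq k}$ of partition-tuples subject to $l\sum_t\lvert\pi_t^i\rvert=\dd_i$ for all $i$; after applying $\psi_l$, the $\underline{n}$-dependent part of its $q$-exponent has, for an arrow $\alpha\colon i\to j$, $n_\alpha$-coefficient $l\sum_t\langle\pi_t^i,\pi_t^j\rangle$. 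Using $\langle\pi,\pi'\rangle\leq\lvert\pi\rvert\lvert\pi'\rvert$ (which holds since $\min(a,b)\leq ab$ for $a,b\geq 1$) together with $\sum_t a_tb_t\leq(\sum_t a_t)(\sum_t b_t)$ for $a_t,b_t\geq 0$ and $l\geq 1$, one obtains $l\sum_t\langle\pi_t^i,\pi_t^j\rangle\leq l^2(\sum_t\lvert\pi_t^i\rvert)(\sum_t\lvert\pi_t^j\rvert)=\dd_i\dd_j$. Since the exponents $l_j$ of Theorem \ref{thmtech} arise from these by grouping equal affine functions and by constant shifts (factoring out powers of $q$ to arrange $P_j(0)\neq 0$, and dividing by $\gcd$), their linear parts obey the same bound, so the $\lambda_j$ have nonnegative linear parts. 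I expect this exponent bookkeeping — in particular verifying that the grouping and normalization steps of Theorem \ref{thmtech} do not spoil the estimate — to be the main obstacle.

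Granting this, the convergence follows exactly as in the proof of Theorem \ref{thmconv}. As $\underline{n}\to\underline{m}$, each $\lambda_j(\underline{n})$, being affine with nonnegative linear part, either stabilizes to a finite value $\lambda_j(\underline{m})$ (when its positive coefficients avoid the diverging coordinates) or tends to $+\infty$; let $\widetilde{J}$ and $\widetilde{J}'$ be the corresponding index sets. The terms with $j\in\widetilde{J}'$ have $q$-valuation $\lambda_j(\underline{n})\to+\infty$ (since $\widehat{P}_j(0),\widehat{Q}(0)\neq 0$), so they vanish in $\N[[q]]$, while those with $j\in\widetilde{J}$ stabilize; the candidate limit is therefore $S:=\sum_{j\in\widetilde{J}}q^{\lambda_j(\underline{m})}\widehat{P}_j(q)/\widehat{Q}(q)$, the power series expansion of a rational fraction with denominator $\widehat{Q}$. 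Finally, because $q^{D(\underline{n})}A_{Q_{\underline{n}},\dd}(q^{-1})$ lies in $\N[q]$ with valuation $0$, comparing the coefficient of each fixed power $q^k$ for $\underline{n}$ large shows both that $S$ has no negative powers, hence $S\in\N[[q]]$, and that the reciprocal polynomials converge coefficientwise to $S$. This proves the theorem.
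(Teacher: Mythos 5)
Your proof is correct, but its key step takes a genuinely different route from the paper's. Both arguments start from Theorem \ref{thmtech}, substitute $q\mapsto q^{-1}$, multiply by $q^{\deg A_{Q_{\underline{n}},\dd}(q)}$, and must then show that the resulting exponents $\lambda_j(\underline{n})=\deg(\underline{n})-l_j(\underline{n})-\deg P_j+\deg Q$ each stabilize or tend to $+\infty$. The paper gets this ``softly'': it first restricts to the sublattice $\N^{\Omega}_{\underline{m}}$ and regroups the $l_j$ so their linear parts become pairwise distinct there (the uniqueness of the regrouped decomposition is needed here), then argues by contradiction --- comparing the degree of the rational expression along rays $ab\underline{n}$ with the degree formula $1-\langle\dd,\dd\rangle_{\underline{n}}$ of Theorem \ref{Kac}, via an open-cone/connected-component argument --- that the unique exponent whose linear part matches $\widetilde{\deg}$ dominates all others on the positive cone, strictly in some diverging direction, so that exactly one term survives in the limit. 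You instead prove the stronger coefficientwise bound $L(l_j)(e_{\alpha})\leq\dd_i\dd_j$ directly by tracking exponents through Hua's formula and the plethystic $\Log$; your chain $l\sum_t\langle\pi_t^i,\pi_t^j\rangle\leq l\sum_t\lvert\pi_t^i\rvert\,\lvert\pi_t^j\rvert\leq l^2\bigl(\sum_t\lvert\pi_t^i\rvert\bigr)\bigl(\sum_t\lvert\pi_t^j\rvert\bigr)=\dd_i\dd_j$ is valid (using $\min(a,b)\leq ab$ and the constraint $l\sum_t\lvert\pi_t^i\rvert=\dd_i$, with the loop case $i=j$ handled identically), and the bookkeeping you flagged as the main obstacle does go through: passing to the common denominator and the normalizations in the proof of Theorem \ref{thmtech} only merge terms with \emph{equal} linear parts and shift constants, so no new linear parts are created and the bound is inherited by the canonical $l_j$. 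What each approach buys: your estimate makes the regrouping over $\N^{\Omega}_{\underline{m}}$ and the contradiction argument unnecessary, since with all $L(\lambda_j)$ coefficientwise nonnegative any approach $\underline{n}\rightarrow\underline{m}$ splits the indices into stabilizing and divergent ones coordinate by coordinate, and it yields the sharper structural fact that \emph{every} $l_j$ is dominated coefficientwise by the degree function; the paper's argument is shorter given the decomposition machinery already in place, never reopens Hua's formula, and identifies the limit as the single term $q^c\tilde{P_0}(q^{-1})/\tilde{Q}(q^{-1})$, whereas yours is a priori a finite sum over the stabilizing indices (the same series, of course). Your closing step --- using that the reciprocals lie in $\N[q]$ to rule out negative powers of $q$ and obtain coefficientwise convergence in $\N[[q]]$ --- matches the reasoning left implicit in the paper.
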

By a result of Kac (\cite[\S 1.15]{MR718127}), the polynomials $A_{Q_{\underline{n}},\dd}$ are all monic. Hence, the limit of Theorem \ref{recipr} is an element of $1+q\N[[q]]$.

\begin{proof}
 For $\underline{n}\in\N^{\Omega}$, we write $A_{Q_{\underline{n}},\dd}(q)=\frac{\sum_{j=0}^rq^{l_j(\underline{n})}P_j(q)}{Q(q)}$ as in Theorem \ref{thmtech}, where the conditions on the polynomials and affine functions appearing in this decomposition are satisfied, which ensures the unicity of the decomposition. Since we are interested in the limit $\underline{n}\rightarrow\underline{m}$, we can restrict ourselves to consider $\underline{n}$ in the subset
 \[
  \N^{\Omega}_{\underline{m}}=\{\underline{n}\in\N^{\Omega}\mid \text{for any $\alpha\in\Omega$, }m_{\alpha}<+\infty\implies n_{\alpha}=m_{\alpha}\}.
 \]
 
 We let $\Omega'=\{\alpha\in\Omega\mid m_{\alpha}=+\infty\}$. We identify $\N^{\Omega'}$ and $\N^{\Omega}_{\underline{m}}$ by sending $(n_{\alpha}')_{\alpha\in\Omega'}$ to $(n_{\alpha})_{\alpha\in\Omega}$, where $n_{\alpha}=n_{\alpha}'$ if $\alpha\in\Omega'$, $n_{\alpha}=m_{\alpha}$ else. For $\alpha\in\Omega'$, we denote by $e_{\alpha}\in\N^{\Omega}_{\underline{n}}$ the vector such that $(e_{\alpha})_{\beta}=m_{\beta}$ if $\beta\not\in\Omega'$, $(e_{\alpha})_{\alpha}=1$ and $(e_{\alpha})_{\beta}=0$ if $\beta\in\Omega'\setminus\{\alpha\}$. The previous identification endows $\N^{\Omega}_{\underline{m}}$ with a monoid structure: for $\underline{n},\underline{n'}\in\N^{\Omega}_{\underline{m}}$, 
 \[
  (\underline{n}+\underline{n'})_{\alpha}=
  \left\{
  \begin{aligned}
   &n_{\alpha}+n'_{\alpha}&\text{ if $\alpha\in\Omega'$}\\
   &m_{\alpha}&\text{ else}
  \end{aligned}
  \right.
 \]
 We define in a similar way $\Z^{\Omega}_{\underline{m}}$ together with a $\Z$-module structure on it. We consider the restrictions $l'_j$ ($0\leq j\leq r$) of the affine functions $l_j$ to $\Z^{\Omega}_{\underline{m}}$. By sorting the $l'_j$'s according to their linear parts, we obtain the existence of an integer $s\geq 0$, affine functions $\tilde{l}_j : \Z^{\Omega}_{\underline{m}}\rightarrow\Z$, $0\leq j\leq s$, whose linear parts are pairwise distinct and have nonnegative coefficients when written in the canonical basis $(e_{\alpha})_{\alpha\in\Omega'}$ of $\Z^{\Omega}_{\underline{m}}$, polynomials with integer coefficients $\tilde{Q},\tilde{P_0},\hdots,\tilde{P_s}$ such that the complex roots of $\tilde{Q}$ are roots of unity, $\tilde{Q}$ is monic, $\tilde{P_j}(0)\neq 0$ for any $0\leq j\leq s$ and $\gcd( \tilde{Q},\tilde{P_1},\hdots,\tilde{P_s})=1$, such that
 \begin{equation}
 \label{decprime}
  A_{Q_{\underline{n}},\dd}(q)=\frac{\sum_{j=0}^sq^{\tilde{l}_j(\underline{n})}\tilde{P_j}(q)}{\tilde{Q}(q)}
 \end{equation}
for any $\underline{n}\in\N^{\Omega}_{\underline{m}}$. Moreover, this decomposition is unique by the arguments of the proof of Theorem \ref{thmtech}. By Theorem \ref{Kac}, for $\underline{n}\in\Z^{\Omega}$, the degree of the polynomial $A_{Q_{\underline{n}},\dd}(q)$ is given by the affine function
\[
\begin{matrix}
 \deg &:& \Z^{\Omega}&\rightarrow&\Z\\
 &&(\underline{n}_{\alpha})_{\alpha\in\Omega}&\mapsto&1-\langle\dd,\dd\rangle_{\underline{n}}
\end{matrix}
\]
where $\langle-,-\rangle_{\underline{n}}$ denotes the Euler form of the quiver $Q_{\underline{n}}$ (Section \ref{quivrep}). More explicitly,
\[
 \deg\underline{n}=1-\sum_{i\in I}\dd_i^2+\sum_{i\xrightarrow{\alpha}j\in\Omega}n_{\alpha}\dd_i\dd_j
\]
for any $\underline{n}=(n_{\alpha})_{\alpha\in\Omega}\in\Z^{\Omega}$. We consider the restriction of $\deg$ to $\Z^{\Omega}_{\underline{m}}$ and denote it $\widetilde{\deg}$. For any affine function $l$ on some space, we let $L(l)$ denote its linear part. As a consequence of the degree formula and the unicity of the decomposition \eqref{decprime}, there is a unique $0\leq j\leq s$ such that $\tilde{l}_j$ has the same linear part as $\widetilde{\deg}$. We assume this is $l_0$. Then, we claim that for any $1\leq j\leq s$ and for any $\underline{n}\in\N^{\Omega}_{\underline{m}}$,
\[
 L(\tilde{l}_j)(\underline{n})\leq L(\tilde{l}_0)(\underline{n})
\]
and there exists $\alpha\in\Omega'$ such that for $\underline{n}=e_{\alpha}\in \N^{\Omega}_{\underline{m}}$, the inequality is strict. We first note that by linearity, the claim is equivalent to the same inequality for any $1\leq j\leq s$ and any $\underline{n}\in(\Q_+)^{\Omega}_{\underline{m}}$ and by density, we can replace $\Q_+$ by $\R_+$. Assume it is false. Then, there exists $1\leq j\leq s$ and $\underline{n}\in(\Q_+)^{\Omega}_{\underline{m}}$ such that $L(\tilde{l}_j)(\underline{n})>L(\tilde{l}_0)(\underline{n})$. The same inequality remains true in a small neighbourhood $U$ of $\underline{n}$ in $(\R_+)^{\Omega}_{\underline{m}}$. Since the $\tilde{l}_j$'s  have pairwise distinct linear parts,
\[
 V:=U\setminus \bigcup_{u\neq v}\{\underline{n}\in(\R_+)^{\Omega}_{\underline{m}}\mid L(\tilde{l}_u)(\underline{n})=L(\tilde{l}_v)(\underline{n}) \}
\]
is a non-empty open subset of $U$. For each connected component $V_0$ of $V$, there exists $1\leq j'\leq s$ such that for any $0\leq u\leq s$, $u\neq j'$ and $\underline{n}\in V_0$,
\[
 L(\tilde{l}_{j'})(\underline{n})>L(\tilde{l}_u)(\underline{n}).
\]
Fix a connected component $V_0$ of $V$ and $j'$ as above. Let $\underline{n}\in V_0\cap (\Q_+)^{\Omega}_{\underline{m}}$ and $a\in\N_{\geq 1}$ be such that $a\underline{n}\in\N^{\Omega}_{\underline{m}}$. Then, for any $b\in\N$, $ba\underline{n}\in\N^{\Omega}_{\underline{m}}$ and
\[
 L(\tilde{l}_{j'})(ab\underline{n})-L(\tilde{l}_u)(ab\underline{n})\rightarrow+\infty
\]
as $b\rightarrow +\infty$ for any $0\leq u\leq s$, $u\neq j'$. Therefore, for $b\gg 0$,
\[
 \deg A_{Q_{ab\underline{n}},\dd}(q)=\tilde{l}_{j'}(ab\underline{n})+\deg\tilde{P_{j'}}-\deg\tilde{Q}>\tilde{l}_{0}(ab\underline{n})+\deg\tilde{P_{0}}-\deg\tilde{Q}
\]
and this is a contradiction with the degree formula for $\deg A_{Q_{ab\underline{n}},\dd}(q)$. Now, since $L(\tilde{l}_j)$ and $L(\tilde{l}_0)$ are distinct linear functions on $\Z^{\Omega}_{\underline{m}}$, there exists $\alpha\in\Omega'$ such that $L(\tilde{l}_j)(e_{\alpha})<L(\tilde{l}_0)(e_{\alpha})$. Consequently, as $\underline{n}\rightarrow \underline{m}$, $\underline{n}\in\N^{\Omega}_{\underline{m}}$, $L(\tilde{l}_0)(\underline{n})-L(\tilde{l}_j)(\underline{n})\rightarrow +\infty$ for $1\leq j\leq s$. Since $\widetilde{\deg}$ and $\tilde{l}_0$ have the same linear part, as $\underline{n}\rightarrow \underline{m}$, $\underline{n}\in\N^{\Omega}_{\underline{m}}$,
\[
 \widetilde{\deg}(\underline{n})-\tilde{l}_j(\underline{n})\rightarrow+\infty
\]
and $ \widetilde{\deg}-\tilde{l}_0:=c$ is constant.
Then, for $\underline{n}\in\N^{\Omega}_{\underline{m}}$, we have
\[
 q^{\deg A_{Q_{\underline{n}},\dd}(q)}A_{Q_{\underline{n}}\dd}(q^{-1})=\frac{q^{\widetilde{\deg}(\underline{n})-\tilde{l}_0(\underline{n})}\tilde{P_0}(q^{-1})+\sum_{j=1}^sq^{\widetilde{\deg}(\underline{n})-\tilde{l}_j(\underline{n})}\tilde{P_j}(q^{-1})}{\tilde{Q}(q^{-1})}.
\]
The limit as $\underline{n}\rightarrow\underline{m}$ is then
\[
 \lim_{\underline{n}\rightarrow\underline{m}}\limits q^{\deg A_{Q_{\underline{n}},\dd}(q)}A_{Q_{\underline{n}}\dd}(q^{-1})=\frac{q^c\tilde{P_0}(q^{-1})}{\tilde{Q}(q^{-1})}.
\]
This proves the theorem.

\end{proof}

\subsection{Particular cases}
\label{examples}
To support Conjecture \ref{conjecture}, we give some examples for the quiver $S_g$ with one vertex and $g$-loops (multi-loop quivers) and for the tennis-racket quiver (Section \ref{arrowloop}):
\[
S_g=\begin{tikzcd}
1 \arrow[out=-30,in=30,loop,swap,"(g)"]
\end{tikzcd}
\]

and the generalized Kronecker quivers $K_r$ with two vertices and $r$ arrows from the first to the second:
\[
 K_r=\begin{tikzcd}
  1\arrow[r,"(r)"]& 2
 \end{tikzcd}
\]
If $Q=S_g$ and $\underline{n}=(n_1,\hdots,n_g)$ (we label the arrows of $Q$ by the integers $1,\hdots, g$), then $Q_{\underline{n}}$ is the one-vertex quiver with $g\cdot\sum_{i=1}^gn_i$ loops:
\[
 Q_{\underline{n}}=
 \begin{tikzcd}
1 \arrow[out=-30,in=30,loop,swap,"(g\cdot\sum_{i=1}^gn_{i})"]
\end{tikzcd}
\]
and if $Q=K_r$, $\underline{n}=(n_1,\hdots,n_r)$, $Q_{\underline{n}}$ is the quiver having two vertices and $r\cdot\sum_{i=1}^rn_i$ arrows from the first to the second:
\[
 Q_{\underline{n}}=
 \begin{tikzcd}[column sep=15ex]
  1\arrow[r,"\left(r\cdot\sum_{i=1}^rn_i\right)"]& 2
 \end{tikzcd}
\]

\begin{example}\label{exKronecker}
 By Theorem \ref{thmconv}, we let $A_{K_{\infty},\dd}=\lim_{r\rightarrow \infty}\limits A_{K_r,\dd}$. This Section illustrates Theorem \ref{thmconv} and its proof. We have the following equalities:
 \[
  \begin{aligned}
  A_{K_r,(2,2)}&=\frac{q^{4r-2}-q^{2(r-1)}(1+2q+3q^2)+2q^{r-1}(q+1)^2-(2q+1)}{(q-1)^3(q+1)^2}\\
   A_{K_{\infty},(2,2)}&=\frac{1+2q}{(1-q)^3(q+1)^2}\\
   &=1+3q+5q^2+9q^3+12q^4+18q^5+22q^6+30q^7+35q^8+45q^9+51q^{10}+O(q^{11})\\
   A_{K_{r},(2,3)}&=\frac{1}{(1-q)^3(1+q)^2(1-q^3)}\left((q^{6r-4}-q^{4r-4}(1+q+q^2)-q^{3r-2}(1+3q+3q^2+q^3)+\right.&\\
   &\left.q^{2r-3}(1+3q+7q^2+8q^3+6q^4+2q^5)-q^{r-2}(1+4q+7q^7+7q^3+4q^4+q^5)+(2+2q+2q^2+q^3)\right)\\
   A_{K_{\infty},(2,3)}&=\frac{2+2q+2q^2+q^3}{(1-q)^3(1+q)^2(1-q^3)}\\
   &=2+4q+10q^2+17q^3+29q^4+43q^5+64q^6+87q^7+119q^8+154q^9+199q^{10}+O(q^{11})\\
    A_{K_{r},(2,4)}&=\frac{1}{(1-q)^4(1+q)^3(1-q^3)(1+q^2)}\left(-q^{8r-7}
    +q^{6r-7}(1+q+q^2+q^3)\right.\\
    &\left.-q^{4r-5}(1+2q-q^2-q^3-5q^4-4q^5-3q^6)-q^{3r-6}(1+q+q^2+4q^3+8q^4+13q^5+15q^6+\right.\\
    &\left.12q^7+7q^8+2q^9)+q^{2r-5}(1+2q+4q^2+10q^3+15q^4+22q^5+19q^6+17q^7+7q^8+3q^9)
     \right.\\
    &\left.-q^{r-3}(1+3q+7q^2+12q^3+14q^4+13q^5+9q^6+4q^7+q^8)+(2+4q+3q^2+4q^3+q^4+q^5)    
    \right)\\
    A_{K_{\infty},(2,4)}&=\frac{2+4q+3q^2+4q^3+q^4+q^5}{(1-q)^4(1+q)^3(1-q^3)(1+q^2)}\\
    &=2 + 6q + 13q^2 + 27q^3 + 46q^4 + 78q^5 + 118q^6 + 179q^7 + 251q^8 + 355q^9 + 473q^{10}+O(q^{11})\\
    A_{K_{\infty},(2,5)}&=3 + 7q + 18q^2 + 35q^3 + 67q^4 + 113q^5 + 186q^6 + 286q^7 + 431q^8 + 622q^9 + 882q^{10}+O(q^{11})
    \\
   A_{K_{\infty},(3,3)}&=3 + 9q + 24q^2 + 48q^3 + 92q^4 + 154q^5 + 248q^6 + 376q^7 + 551q^8 + 775q^9 + 1070q^{10}+O(q^{11})\\
   A_{K_{\infty},(3,4)}&=5 + 15q + 44q^2 + 98q^3 + 200q^4 + 364q^5 + 631q^6 + 1021q^7 + 1596q^8 + 2390q^9 + 3485q^{10}+O(q^{11})\\
  A_{K_{\infty},(3,5)}&=7 + 23q + 70q^2 + 165q^3 + 355q^4 + 685q^5 + 1247q^6 + 2129q^7 + 3491q^8 + 5488q^9 + 8370q^{10}+O(q^{11})\\
  A_{K_{\infty},(4,4)}&=8 + 32q + 98q^2 + 250q^3 + 547q^4 + 1101q^5 + 2036q^6 + 3574q^7 + 5933q^8 + 9513q^9 + 14658q^{10}+O(q^{11})
  \end{aligned}
 \]
\end{example}

\subsubsection{The multi-loops quivers}\label{multloop}
Let $S_g$ be the quiver with one vertex and $g$ loops and $A_{S_g,d,0}=\frac{A_{S_g,d}}{q^{1+d(g-1)}}$. We let
\[
 A_{S_{\infty},d,0}=\lim_{g\rightarrow\infty}\limits A_{S_g,d,0}.
\]
We have the following:
\[
 \begin{aligned}
  A_{S_{\infty},1,0}&=1\\
  A_{S_{g},2,0}&=\frac{1-q^{2g}}{1-q^2}\\
  A_{S_{\infty},2,0}&=\frac{1}{1-q^2}\\
  &=1 + q^2 + q^4 + q^6 + q^8 + q^{10}+O(q^{11})\\
  A_{S_{g},3,0}&=\frac{(1+q)-q^{2g-1}(1+q+q^2)+q^{6g-1}}{(1-q^2)(1-q^3)}\\
  A_{S_{\infty},3,0}&=\frac{1}{(1-q)(1-q^3)}\\
  &=1 + q + q^2 + 2q^3 + 2q^4 + 2q^5 + 3q^6 + 3q^7 + 3q^8 + 4q^9 + 4q^{10}+O(q^{11})\\
  A_{S_{\infty},4,0}&=\frac{1}{(1-q^2)^2(1-q^3)(1+q^2)}\left((1+q+2q^2+q^3+q^4)-q^{2g-1}(1+2q+3q^2+3q^3+2q^4+q^5)+
  \right.\\
  &q^{4g-2}(-1+2q^3+q^4+q^5)+
  \left.q^{6g-3}(1+q+q^2+q^3)-q^{12g-3}\right)\\
  A_{S_{\infty},4,0}&=\frac{1+q+2q^2+q^3+q^4}{(1-q^2)^2(1-q^3)(1+q^2)}=\frac{1}{(1-q)(1-q^2)^2}\\
  &=1 + q + 3q^2 + 3q^3 + 6q^4 + 6q^5 + 10q^6 + 10q^7 + 15q^8 + 15q^9 + 21q^{10}+O(q^{11})\\
  A_{S_{\infty},5,0}&=\frac{1+2q+3q^2+4q^3+4q^4+4q^5+3q^6+2q^7+q^8}{(1-q)^3(1+q)^2(1-q^3)(1+q^2)(1-q^5)}=\frac{1-q+q^2}{(1-q)^3(1-q^5)}\\
  &=1 + 2q + 4q^2 + 7q^3 + 11q^4 + 17q^5 + 24q^6 + 33q^7 + 44q^8 + 57q^9 + 73q^{10}+O(q^{11})\\
  A_{S_{\infty},6,0}&=\frac{1+q+2q^2+2q^3+3q^4+2q^6+q^7}{(1-q)^5(1+q)^3(1-q+q^2)(1+q+q^2)}\\
  &=1 + 2q + 6q^2 + 11q^3 + 22q^4 + 33q^5 + 57q^6 + 80q^7 + 121q^8 + 164q^9 + 231q^{10}+O(q^{11})\\
  A_{S_{\infty},7,0}&=\frac{(1-q+q^2)^2}{(q-1)^6(q^6+q^5+q^4+q^3+q^2+q+1)}\\
  &=1 + 3q + 8q^2 + 18q^3 + 36q^4 + 66q^5 + 113q^6 + 184q^7 + 286q^8 + 429q^9 + 624q^{10}+O(q^{11})\\
  A_{S_{\infty},8,0}&=1 + 3q + 11q^2 + 25q^3 + 59q^4 + 113q^5 + 217q^6 + 371q^7 + 630q^8 + 994q^9 + 1554q^{10}+O(q^{11})\\
  A_{S_{\infty},9,0}&=1 + 4q + 13q^2 + 37q^3 + 88q^4 + 190q^5 + 382q^6 + 715q^7 + 1270q^8 + 2162q^9 + 3536q^{10}+O(q^{11})\\
  A_{S_{\infty},10,0}&=1 + 4q + 17q^2 + 48q^3 + 130q^4 + 297q^5 + 647q^6 + 1280q^7 + 2438q^8 + 4363q^9 + 7571q^{10}+O(q^{11})
 \end{aligned} 
\]
\subsubsection{The tennis-racket quiver}\label{arrowloop}We consider the quiver
\[
Q=\begin{tikzcd}
1\arrow[r,"\alpha"]&2 \arrow[out=0,in=90,loop,swap,"\beta"]
\end{tikzcd}.
\]
We let $A_{Q_{\underline{n}},\dd,0}=\frac{A_{Q_{\underline{n}},\dd}}{q^{1+d_2(n_{\beta}-1)}}$ and
\[
 A_{Q_{\underline{\infty},\dd,0}}=\lim_{\underline{n}\rightarrow\underline{\infty}}A_{Q_{\underline{n}},\dd}.
\]
We have the following formulas, obtained combining Theorem \ref{thmtech} and computations. We let $\underline{n}=(n_{\alpha},n_{\beta})$.
\[
 \begin{aligned}
  A_{Q_{\underline{n}},(1,1),0}&=\frac{1-q^{n_{\alpha}}}{1-q}\\
  A_{Q_{\underline{\infty}},(1,1),0}&=\frac{1}{1-q}\\
  &=1 + q + q^2 + q^3 + q^4 + q^5 + q^6 + q^7 + q^8 + q^9 + q^{10}+O(q^{11})\\
  A_{Q_{\underline{n}},(1,2),0}&=\frac{1+q-q^{n_{\alpha}}(1+q)-q^{2n_{\beta}}+q^{2(n_{\alpha}+n_{\beta})}}{(1-q)(1-q^2)}\\ 
  A_{Q_{\underline{\infty}},(1,2),0}&=\frac{1+q}{(1-q)(1-q^2)}\\
  &=1 + 2q + 3q^2 + 4q^3 + 5q^4 + 6q^5 + 7q^6 + 8q^7 + 9q^8 + 10q^9 + 11q^{10}+O(q^{11})\\
  A_{Q_{\underline{n}},(1,3),0}&=\frac{1}{(1-q)(1-q^2)(1-q^3)}\left((q+1)(q^2+q+1)-q^{n_{\alpha}}(q+1)(q^2+q+1)-q^{2n_{\beta}-1}(q^2+1)(q^2+q+1)\right.\\
  &\left.(1+q+q^2)(q^{n_{\alpha}+2n_{\beta}+1}+q^{2n_{\alpha}+2n_{\beta}+1})+q^{6n_{\beta}-1}-q^{3n_{\alpha}+6n_{\beta}-1}\right)\\
  A_{Q_{\underline{\infty}},(1,3),0}&=\frac{1}{(1-q)^3}\\
  &=1 + 3q + 6q^2 + 10q^3 + 15q^4 + 21q^5 + 28q^6 + 36q^7 + 45q^8 + 55q^9 + 66q^{10}+O(q^{11})\\
  A_{Q_{\underline{\infty}},(1,4),0}&=1 + 4q + 10q^2 + 20q^3 + 35q^4 + 56q^5 + 84q^6 + 120q^7 + 165q^8 + 220q^9 + 286q^{10}+O(q^{11})\\
  A_{Q_{\underline{n}},(2,1),0}&=\frac{1-q^{n_{\alpha}-1}(1+q)+q^{2n_{\alpha}}}{(1-q)(1-q^2)}\\
  A_{Q_{\underline{\infty}},(2,1),0}&=\frac{1}{(1-q)(1-q^2)}\\
  &=1 + q + 2q^2 + 2q^3 + 3q^4 + 3q^5 + 4q^6 + 4q^7 + 5q^8 + 5q^9 + 6q^{10}+0(q^{11})\\
  A_{Q_{\underline{n}},(2,2),0}&=\frac{1+q+2q^2-q^{n_{\alpha}}(2+4q+2q^2)-q^{2n_{\beta}}+q^{2n_{\alpha}}(1+3q)+q^{2(n_{\alpha}+n_{\beta})}(1+q)-q^{4n_{\alpha}+2n_{\beta}-1}}{(1-q^2)^2(1-q)}\\
  A_{Q_{\underline{\infty}},(2,2),0}&=\frac{1+q+2q^2}{(1-q^2)^2(1-q)}\\
  &=1 + 2q + 6q^2 + 8q^3 + 15q^4 + 18q^5 + 28q^6 + 32q^7 + 45q^8 + 50q^9 + 66q^{10}+O(q^{11})\\
  A_{Q_{\underline{\infty}},(2,3),0}&=\frac{1+q+q^2}{(1-q^2)(1-q)^3}\\
  &=1 + 4q + 11q^2 + 23q^3 + 42q^4 + 69q^5 + 106q^6 + 154q^7 + 215q^8 + 290q^9 + 381q^{10}+O(q^{11})\\
  A_{Q_{\underline{\infty}},(2,4),0}&=1 + 5q + 19q^2 + 45q^3 + 100q^4 + 182q^5 + 322q^6 + 510q^7 + 795q^8 + 1155q^9 + 1661q^{10}+O(q^{11})\\
  A_{Q_{\underline{\infty}},(3,1),0}&=1 + q + 2q^2 + 3q^3 + 4q^4 + 5q^5 + 7q^6 + 8q^7 + 10q^8 + 12q^9 + 14q^{10}+O(q^{11})\\
  A_{Q_{\underline{\infty}},(3,2),0}&=1 + 3q + 7q^2 + 14q^3 + 24q^4 + 38q^5 + 57q^6 + 81q^7 + 111q^8 + 148q^9 + 192q^{10}+O(q^{11})\\
  A_{Q_{\underline{\infty}},(3,3),0}&=\frac{(2q+1)(q^5+q^4+2q^3+3q^2+q+1)}{(1-q^2)(1-q)^2(1-q^3)^2}\\  
  &=1 + 5q + 15q^2 + 38q^3 + 78q^4 + 144q^5 + 248q^6 + 397q^7 + 605q^8 + 890q^9 + 1261q^{10}+O(q^{11})\\
  A_{Q_{\underline{\infty}},(3,4),0}&=1 + 7q + 27q^2 + 79q^3 + 191q^4 + 405q^5 + 779q^6 + 1390q^7 + 2336q^8 + 3740q^9 + 5751q^{10}+O(q^{11})\\
  A_{Q_{\underline{\infty}},(4,1),0}&=1 + q + 2q^2 + 3q^3 + 5q^4 + 6q^5 + 9q^6 + 11q^7 + 15q^8 + 18q^9 + 23q^{10}+O(q^{11})\\
  A_{Q_{\underline{\infty}},(4,2),0}&=1 + 3q + 9q^2 + 17q^3 + 35q^4 + 56q^5 + 95q^6 + 139q^7 + 211q^8 + 290q^9 + 410q^{10}=O(q^{11})
 \end{aligned}
\]

\subsection{The constant coefficient of Kac polynomials}
When $Q$ has no edge-loops, the constant coefficient of $A_{Q_{\underline{n}},\dd}$ can easily be seen to converge when $\underline{n}\rightarrow \underline{m}$. Indeed, for a connected dimension vector and $\underline{n}$ big enough (that is, $n_{\alpha}$ big enough for any $\alpha\in\Omega$), $\dd$ is a positive root and therefore, by Theorem \ref{Kac},
\[
 A_{Q_{\underline{n}},\dd}(0)=\dim\mathfrak{g}_{Q_{\underline{n}}}[\dd]=\dim\mathfrak{n}_{Q_{\underline{n}}}^+[\dd].
\]
where $\mathfrak{g}_{Q_{\underline{n}}}$ is the Kac-Moody algebra asociated to $Q_{\underline{n}}$ and $\mathfrak{g}_{Q_{\underline{n}}}=\mathfrak{n}_{Q_{\underline{n}}}^-\oplus\mathfrak{h}_{Q_{\underline{n}}}\oplus\mathfrak{n}_{Q_{\underline{n}}}^+$ its triangular decomposition. By \cite[Theorem 9.11]{MR1104219}, the $\N^I$-graded Lie algebra $\mathfrak{n}_{Q_{\underline{n}}}^+$ has (Chevalley) generators $E_i$, $i\in I$ verifying Serre's relations: if $i\neq j$, and $a_{ij}=-\sum_{\alpha : i\rightarrow j}n_{\alpha}-\sum_{\alpha:j\rightarrow i}n_{\alpha}$
\[
 \ad(E_i)^{1-a_{ij}}(E_j)=0
\]
This relation is homogeneous of degree $e_j+(1-a_{ij})e_i$. Therefore, when $\underline{n}\rightarrow \underline{m}$, $\dim\mathfrak{g}_{Q_{\underline{n}}}[\dd]$ stabilizes and converges to $\dim\mathfrak{g}_{Q_{\underline{m}}}$, where $\mathfrak{g}_{Q_{\underline{m}}}$ is defined the same way as $\mathfrak{g}_{Q_{\underline{n}}}$, except that if $m_{\alpha}=\infty$, $\alpha:i\rightarrow j$, there is no relation between $E_i$ and $E_j$. If $Q$ is complete, that is any two vertices are connected by an arrow, and $\underline{m}=\underline{\infty}$, then $\mathfrak{g}_{Q_{\underline{\infty}}}$ is the free Lie algebra on generators $E_i$, $i\in I$. Its character is given by Witt's dimension formula. For $\dd=\sum_{i\in I}d_ie_i\in\N^I$ and $\lvert\dd\rvert=\sum_{i\in I}d_i$,
\[
 \dim\mathfrak{g}_{Q_{\underline{\infty}}}[\dd]=\frac{1}{\lvert\dd\rvert}\sum_{d\mid d_i, i\in I}\mu(d)\frac{\frac{\lvert\dd\rvert}{d}!}{\prod_{i\in I}\frac{d_i}{d}!}
\]
(see \emph{e.g.} \cite[(2.1.21)]{MR840216}). It is easy to check on the examples that for the generalized Kronecker quivers, see Example \ref{exKronecker}, this formula indeed gives the constant coefficient of $A_{K_{\infty},\dd}$.
\subsection{Rate of convergence}
Theorem \ref{thmtech} also gives the rate of convergence of $A_{Q_{\underline{n}},\dd}$ to $A_{Q_{\underline{m}},\dd}$. To achieve this, let
\[
 \alpha_{\underline{n}}=val(A_{Q_{\underline{n}},\dd}-A_{Q_{\underline{m}},\dd})
\]
be the highest power of $q$ dividing $A_{Q_{\underline{n}},\dd}(q)-A_{Q_{\underline{m}},\dd}(q)$.
\begin{proposition}
 The rate of convergence is linear in $\underline{n}$, that is $\alpha_{\underline{n}}(q)$ is bigger than a linear function of $\underline{n}$ whose limit in any infinite direction is infinite. 
\end{proposition}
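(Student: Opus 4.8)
The plan is to extract the rate of convergence directly from the canonical form of Theorem~\ref{thmtech}, reusing the bookkeeping already set up in the proofs of Theorem~\ref{thmconv} and Theorem~\ref{recipr}. First I would restrict to the monoid $\N^{\Omega}_{\underline{m}}$ introduced in the proof of Theorem~\ref{recipr}, so that the coordinates $n_{\alpha}$ with $m_{\alpha}<+\infty$ are frozen at $m_{\alpha}$ and only those indexed by $\Omega'=\{\alpha\in\Omega\mid m_{\alpha}=+\infty\}$ vary. Writing $A_{Q_{\underline{n}},\dd}(q)=\bigl(\sum_{j=0}^{r}q^{l_j(\underline{n})}P_j(q)\bigr)/Q(q)$ as in Theorem~\ref{thmtech}, I would split the indices according to the behaviour under $\underline{n}\to\underline{m}$, setting $J=\{j\mid \lim_{\underline{n}\to\underline{m}}l_j(\underline{n})<+\infty\}$ and $J'=\{0,\dots,r\}\setminus J$. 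Because each linear part $\tilde{l}_j$ has nonnegative coefficients, an index $j$ lies in $J$ exactly when $\tilde{l}_j(e_{\alpha})=0$ for every $\alpha\in\Omega'$; for such $j$ the function $l_j$ depends only on the frozen coordinates and is therefore \emph{constant} on $\N^{\Omega}_{\underline{m}}$. These are precisely the terms that assemble into the limit $A_{Q_{\underline{m}},\dd}$ computed in the proof of Theorem~\ref{thmconv}.

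Consequently, for every $\underline{n}\in\N^{\Omega}_{\underline{m}}$ the $J$-terms cancel identically and the difference telescopes to
\[
 A_{Q_{\underline{n}},\dd}(q)-A_{Q_{\underline{m}},\dd}(q)=\frac{\sum_{j\in J'}q^{l_j(\underline{n})}P_j(q)}{Q(q)}.
\]
Next I would compute the valuation term by term. Since the roots of $Q$ are roots of unity and $0$ is not one, $Q(0)\neq 0$, so $1/Q(q)$ has valuation $0$; and by the normalisation $P_j(0)\neq 0$ each $P_j$ has valuation $0$ as well. Hence $\operatorname{val}\bigl(q^{l_j(\underline{n})}P_j(q)\bigr)=l_j(\underline{n})$, and since the valuation of a sum is at least the minimum of the valuations,
\[
 \alpha_{\underline{n}}=\operatorname{val}\bigl(A_{Q_{\underline{n}},\dd}-A_{Q_{\underline{m}},\dd}\bigr)\geq \min_{j\in J'}l_j(\underline{n}).
\]
Any cancellation among the surviving terms only increases $\alpha_{\underline{n}}$, so this one-sided bound is all that is needed.

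Finally I would analyse the right-hand side. It is a minimum of finitely many affine functions, each with nonnegative linear part and each tending to $+\infty$ as $\underline{n}\to\underline{m}$, the latter being the very definition of $J'$; a minimum of finitely many divergent quantities diverges, so $\alpha_{\underline{n}}\to+\infty$ along the whole approach to $\underline{m}$. Along any fixed ray $\underline{n}=\underline{n}_0+s\underline{r}$ with $\underline{r}$ supported on $\Omega'$ and strictly positive there, each $l_j(\underline{n})$ ($j\in J'$) is affine in $s$ with strictly positive slope, so for $s\gg 0$ the minimum is realised by a single such affine function; this is the precise sense in which the rate is linear with infinite limit in every infinite direction.

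The proof is short because Theorem~\ref{thmtech} front-loads all the work; the two points demanding care are (i) verifying that the $J$-terms cancel \emph{exactly}, not merely asymptotically, on $\N^{\Omega}_{\underline{m}}$, which rests squarely on the nonnegativity of the coefficients of the $\tilde{l}_j$, and (ii) the reading of the word ``linear'' in the statement: the honest lower bound $\min_{j\in J'}l_j$ is concave piecewise linear rather than a single affine function, and indeed no single affine function with nonnegative coefficients can bound it below while diverging in every direction (consider $\min(n_a,n_b)$). The main, and rather mild, obstacle is thus to phrase the bound correctly as the minimum of the $l_j$, $j\in J'$, and to record that it diverges both along every ray and along the full limit $\underline{n}\to\underline{m}$, rather than to assert a genuinely affine lower bound.
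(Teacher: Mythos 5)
Your proof is correct and is exactly the argument the paper intends: the paper states this proposition without a written proof, remarking only that it follows from Theorem~\ref{thmtech}, and your derivation --- the $J$-terms cancel exactly on $\N^{\Omega}_{\underline{m}}$ because their linear parts vanish on the $\Omega'$-coordinates, so $\alpha_{\underline{n}}\geq\min_{j\in J'}l_j(\underline{n})$ since $Q(0)\neq 0$ and $P_j(0)\neq 0$ --- fills in precisely that reasoning. Your closing observation that the honest lower bound is a minimum of finitely many affine functions (concave piecewise linear, diverging as $\underline{n}\rightarrow\underline{m}$ and along every ray) rather than a single affine function is a correct sharpening of the proposition's loose wording, not a gap in your argument.
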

In fact, the optimal choice can be determined using the affine linear functions $l_j$ given by Theorem \ref{thmtech}. For example, we expect to have $\alpha_{r}=r-\max(d_1,d_2)+1$ for the generalized Kronecker quiver and $\alpha_g=2g$ for $d=1$, $\alpha_g=2g-1$ for $d\geq 2$ for the multi-loops quivers.

\subsection{Dimension count of cuspidal functions}
In \cite{MR3968896}, Bozec and Schiffmann introduced two families of polynomials, $C_{Q,\dd}(q)$ and $C_{Q,\dd}^{abs}(q)$. When evaluated at $q=\lvert\F_q\rvert$,
\[
 C_{Q,\dd}(q)=\dim\HH_{Q,\F_q}^{cusp}[\dd]
\]
where $\HH_{Q,\F_q}^{cusp}[\dd]$ is the vector space of cuspidal functions of degree $\dd$ in the Hall algebra of $Q$ over $\F_q$ (see \emph{op. cit.} for more details). The polynomials $C_{Q,\dd}^{abs}(q)$ are obtained by inductively inverting Borcherds character formula for Borcherds Lie algebras starting from the formula \cite[(4.3)]{MR3968896}:
\[
 \pi(\ch(U(\tilde{\mathfrak{n}}_Q^{\N})))=\Exp_{q,z}\left(\sum_{\dd>0}A_{Q,\dd}(q)z^{\dd}\right),
\]
where $\tilde{\mathfrak{n}}_{Q}^{\N}$ is the positive part of a conjectural Borcherds Lie algebra whose character is given by the right hand side of the above formula. Moreover, these two families of polynomials are related:
\[
  C_{Q,\dd}^{abs}(q)=C_{Q,\dd}(q) \text{ if $\dd\in\N^I$ , $\langle\dd,\dd\rangle<0$}
\]
\[
\Exp_{z}\left(\sum_{l\geq 1}C_{Q,l\dd}(q)z^l\right)=\Exp_{q,z}\left(\sum_{l\geq 1}C_{Q,l\dd}^{abs}(q)z^{l}\right)\text{ if $\dd\in\N^I$ is indivisible and $\langle\dd,\dd\rangle\leq 0$}.
\]
Combining all of this, Theorem \ref{thmconv} has as a consequence the stabilization of the family of polynomials $C_{Q_{\underline{n}},\dd}(q)$ and $C_{Q_{\underline{n}},\dd}^{abs}(q)$ as we increase the number of arrows, if we assume that $Q$ has no loops. It seems more difficult to conclude if $Q$ has loops although it is reasonable to expect they also verify a stabilization property of the same kind. Computing the valuation of cuspidal polynomials is also a problem for which no answer is known.

\section{Geometric interpretation}
\label{geominterp}
It would be nice to be able to understand geometrically these stabilization properties of Kac polynomials. In this Section, we give some ideas of where to look for such an interpretation.

\subsection{Nakajima quiver varieties} Kac polynomials have an interpretation related to the cohomology of Nakajima quiver varieties. These are symplectic resolutions defined in terms of GIT quotients of moduli spaces of framed representations of a quiver whose link with the representation theory of Kac-Moody algebras has been studied by Nakajima from the 1990s (\cite{MR1302318, MR1604167}). For indivisible dimension vectors, the link between the Kac polynomial and Nakajima quiver varieties is given in \cite{MR2038196} and for any dimension vector in \cite{MR3034296}. Following King (\cite{MR1315461}), for $\lambda\in \Z^I$ and $\dd\in \N^I$ such that $\lambda\cdot\dd:=\sum_{i\in I}\lambda_i\dd_i=0$, a representation $M$ of $Q$ of dimension vector $\dd$ is said to be stable if $\lambda\cdot\dim N<0$ for any subrepresentation $0\neq N\subsetneq M$.
\begin{theorem}[\cite{MR2038196}]
\label{CBtheorem}
 Let $\dd$ be an indivisible dimension vector, $\lambda\in\Z^I$ such that $\lambda\cdot\dd=0$ and $\lambda\cdot\ee\neq 0$ for any $0<\ee<\dd$. Then
 \[
  A_{Q,\dd}(q)=\sum_{i=0}^d\dim H^{2d-2i}(X_s,\C)q^{i}
 \]
where $X_s$ is the moduli space of $\lambda$-stable representations of dimension vector $\dd$ of the preprojective algebra $\Pi_Q$ and $\dim X_s=2d$. (The variety $X_s$ is an example of a Nakajima quiver variety when the framing is trivial).
\end{theorem}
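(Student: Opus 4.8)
The plan is to prove the identity by comparing two computations of an arithmetic invariant of $X_s$: a point count over finite fields, which I will match with $A_{Q,\dd}$, and a cohomological count, which recovers the right-hand side. Concretely, I would first realize $X_s$ as a Nakajima quiver variety. Let $\overline{Q}$ be the doubled quiver, $\mu:\Rep(\overline{Q},\dd)\to\gl(\dd)_0$ the moment map for the $G_\dd=\prod_i\GL_{\dd_i}$-action, so that representations of the preprojective algebra $\Pi_Q$ of dimension $\dd$ form $\mu^{-1}(0)$, and set $X_s=\mu^{-1}(0)^{\lambda\text{-st}}/G_\dd$. The genericity hypotheses on $\lambda$ (namely $\lambda\cdot\dd=0$ and $\lambda\cdot\ee\neq 0$ for $0<\ee<\dd$, together with $\dd$ indivisible) guarantee that stability coincides with semistability, that $PG_\dd$ acts freely on the stable locus, and hence that $X_s$ is a smooth symplectic variety of dimension $2d=2(1-\langle\dd,\dd\rangle)$.

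For the arithmetic side I would count $\F_q$-points. Since $\lambda$ is generic and $\dd$ is indivisible, the quotient is a principal $PG_\dd$-bundle over the stable locus (Zariski-locally trivial by Lang's theorem for the connected group $G_\dd$), so $|X_s(\F_q)|=|\mu^{-1}(0)^{\lambda\text{-st}}(\F_q)|/|PG_\dd(\F_q)|$. The key input is the point count of the moment-map fibre: writing $\mathbf{1}[\mu(x)=0]$ as a Fourier sum $\frac{1}{|\gl(\dd)_0|}\sum_{\xi}\psi(\langle\xi,\mu(x)\rangle)$ over additive characters reduces $|\mu^{-1}(0)(\F_q)|$ to trace-weighted counts of representations of $\overline{Q}$, which Hua's formula (Theorem \ref{formulaHua}) and Kac's theorem (Theorem \ref{Kac}) re-express through the polynomials $A_{Q,\ee}$. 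A Möbius/plethystic inversion then isolates the contribution of the $\lambda$-stable locus — here stability forces the relevant $\Pi_Q$-modules to be simple, matching the absolutely indecomposable count — and yields the clean identity $|X_s(\F_q)|=q^d A_{Q,\dd}(q)$, valid for all prime powers $q$; in particular $X_s$ is polynomial-count.

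For the cohomological side I would prove that $X_s$ is cohomologically pure, of Tate type, with cohomology concentrated in even degrees. The tool is the contracting $\mathbb{G}_m$-action obtained by scaling the starred (cotangent) arrows of $\overline{Q}$: it descends to $X_s$, the limit $\lim_{t\to 0}t\cdot x$ exists for every point (so $X_s$ is semiprojective), and the fixed locus is proper, since it lies in the fibre over the cone point of the affinization $X_s\to X_0$, the projective central fibre of real dimension $2d$. The Bialynicki--Birula decomposition then exhibits $X_s$ as a disjoint union of affine bundles over the fixed components, forcing $H^{\mathrm{odd}}(X_s)=0$, each $H^{2i}(X_s,\C)$ pure Tate of weight $2i$, and $H^k(X_s)=0$ for $k>2d$. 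The comparison between the Weil point count and $\ell$-adic cohomology with compact support gives $|X_s(\F_q)|=\sum_i\dim H^{2i}_c(X_s,\mathbf{Q}_\ell)\,q^i$, and Poincaré duality on the smooth $2d$-dimensional $X_s$ rewrites this as $q^d\sum_{i=0}^d\dim H^{2d-2i}(X_s,\C)\,q^i$. Dividing the two computations of $|X_s(\F_q)|$ by $q^d$ yields the theorem.

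The main obstacle is the arithmetic identification $|X_s(\F_q)|=q^d A_{Q,\dd}(q)$. The purity argument is essentially mechanical once the contracting action and the properness of the fixed locus are in place, but matching the moment-map point count with the Kac polynomial requires the full strength of the Fourier-analytic evaluation of $|\mu^{-1}(0)(\F_q)|$ together with a careful inclusion–exclusion passing from all representations to the $\lambda$-stable (equivalently, simple) ones. The genericity of $\lambda$ is precisely what makes this last reduction possible, and verifying that the resulting combinatorial expression collapses to $A_{Q,\dd}$ is the technical heart of the proof.
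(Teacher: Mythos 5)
First, note that the paper does not prove this statement at all: it is imported verbatim from Crawley-Boevey--Van den Bergh \cite{MR2038196}, and the paper only supplies the construction of $X_s$ afterwards. So your attempt must be measured against the proof in that reference, and there it diverges at exactly the point you yourself flag as the ``technical heart'': the identity $|X_s(\F_q)|=q^dA_{Q,\dd}(q)$. Your proposed mechanism --- Fourier-expanding the indicator of $\mu^{-1}(0)$ and then passing by ``M\"obius/plethystic inversion'' from all representations to the $\lambda$-stable locus --- is a genuine gap, not a routine verification. The character-sum evaluation of $|\mu^{-1}(0)(\F_q)|$ produces precisely the plethystic-exponential formula quoted in this paper as Theorem \ref{ptsnil} and \cite[Formula (1.4)]{BSV}, which mixes the contributions of \emph{all} Jordan--H\"older types; there is no straightforward inversion isolating the stable stratum, and your bridging claim that ``stability forces the relevant $\Pi_Q$-modules to be simple, matching the absolutely indecomposable count'' is unjustified: simple $\Pi_Q$-modules at parameter $0$ do not biject with absolutely indecomposable $Q$-representations. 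What \cite{MR2038196} actually does is deform: for generic $\lambda$ they count $\mu^{-1}(\lambda)$, i.e.\ representations of the \emph{deformed} preprojective algebra $\Pi^{\lambda}$. There the count is elementary linear algebra, no Fourier analysis at all: projecting $\mu^{-1}(\lambda)\rightarrow\Rep(Q,\dd)$, Crawley-Boevey's lifting lemma says the fibre over $x$ is nonempty iff every indecomposable summand of $x$ has dimension vector $\ee$ with $\lambda\cdot\ee=0$ --- so genericity of $\lambda$ forces $x$ indecomposable --- and is then an affine space of dimension controlled by $\dim\End(x)$. Summing over Galois orbits of absolutely indecomposables yields $q^dA_{Q,\dd}(q)$ cleanly, after which one identifies the Betti numbers of the affine model $\mu^{-1}(\lambda)/\!\!/G_{\dd}$ with those of the semiprojective $X_s$ (via the $\mathbb{G}_m$-equivariant family over $\mathbb{A}^1$, or hyperk\"ahler rotation over $\C$). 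Your plan omits this deformation entirely, and without it the arithmetic step does not close.

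Your purity half is essentially right in outline (Bialynicki-Birula, no odd cohomology, Katz-style comparison of the Weil count with Betti numbers after spreading out), but one justification is off: with the action scaling only the starred arrows, the fixed locus of $X_s$ need \emph{not} lie over the cone point of the affinization $X_0$, because trace functions of oriented cycles in $Q$ itself have weight zero (the quivers here may have loops). Either use the full dilation $(x,x^*)\mapsto(tx,tx^*)$, which scales $\mu$ by $t^2$, preserves stability, and genuinely contracts $X_0$ to a point, or argue properness of the fixed locus differently; existence of limits upstairs then follows from the valuative criterion applied to the projective morphism $X_s\rightarrow X_0$, not merely from properness as stated.
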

For clarity, we briefly explain the construction of $X_s$. First recall that the preprojective algebra $\Pi_Q$ is the quotient of the path algebra of the doubled quiver $\overline{Q}$ (for each arrow $a$ of $Q$, we add an arrow $a^*$ in the opposite direction) by the two-sided ideal generated by
\[
 \sum_{a\in\Omega}(aa^*-a^*a).
\]
The vector space 
\[
 E_{\overline{Q},\dd}=\bigoplus_{a:i\rightarrow j\in\Omega}\Hom(\C^{\dd_i},\C^{\dd_j})\oplus\bigoplus_{a:i\rightarrow j\in\Omega}\Hom(\C^{\dd_j},\C^{\dd_i})
\]
is naturally acted on by the product of linear groups
\[
 \GL_{\dd}=\prod_{i\in I}\GL_{\dd_i}.
\]
Let $E_{\Pi_Q,\dd}$ be the closed subvariety of $E_{\overline{Q},\dd}$ of elements $(x_a,x_{a^*})_{a\in\Omega}$ such that $\sum_{a\in \Omega}(x_ax_{a^*}-x_{a^*}x_a)=0$. The set-theoretic quotient $E_{\Pi_{Q},\dd}/\GL_{\dd}$ parametrizes the isomorphism classes of representations of $\Pi_Q$ of dimension vector $\dd$. The action of $\GL_{\dd}$ factorizes through $PGL_{\dd}=\GL_{\dd}/\C^*$ and an element $\lambda\in\Z^I$ such that $\lambda\cdot\dd=0$ gives a character of $PGL_{\dd}$:
\[
 \begin{matrix}
  \chi&:&PGL_{\dd}&\rightarrow&\C^*\\
  &&(\overline{g_i}))_{i\in I}&\mapsto&\prod_{i\in I}\det(g_i)^{-\lambda_i}
 \end{matrix}.
\]
This is a linearization of the trivial vector bundle over $E_{\Pi_Q,\dd}$ for the $PGL_{\dd}$-action and it can be checked that the stable points of $E_{\Pi_{Q},\dd}$ are precisely whose $x\in E_{\Pi_{Q},\dd}$ giving a stable representation of $\Pi_Q$. We let $E_{\Pi_Q,\dd}^{\lambda-st}\subset E_{\Pi_Q,\dd}$ be the open subset of $\lambda$-stable elements. The variety $X_s$ is obtained as the GIT quotient $E_{\Pi_Q,\dd}/\!/\!_{\chi}PGL_{\dd}$.

\begin{example}
 For $Q=K_r$, $\dd=(1,1)$, $\lambda=(1,-1)$, a representation $(a_i,b_i)_{1\leq i\leq r}$ of $\Pi_Q$ is stable if and only if $(a_1,\hdots,a_r)\neq 0$. In this case, $X_s$ is the total space of the tautological bundle over the grassmannian of hyperplanes in $\C^r$.  Therefore, the cohomology of $X_s$ is in this case very simple ($H^i(X_s,\C)$ is one dimensional for $i$ even and $0\leq i\leq 2(r-1)$ and zero else). Combining this with Theorem \ref{CBtheorem}, we obtain an explicit formula for the Kac polynomial $A_{K_r,(1,1)}(q)$ from which we obtain a proof of Theorem \ref{thmconv} for the Kronecker quiver and the dimension vector $(1,1)$ (although this case is trivial).

\end{example}

\subsection{Lusztig nilpotent variety}
Kac polynomials appear in the counting of points of Lusztig nilpotent varieties, as shown in \cite{BSV}.
\begin{theorem}
\label{ptsnil}
 For any quiver $Q$,
 \[
  \Exp_{q,z}\left(\frac{1}{1-q^{-1}}\sum_{\dd}A_{\dd}(q^{-1})z^{\dd}\right)=\sum_{\dd}\frac{\lvert\Lambda_{\dd}(\F_q)\rvert}{\lvert G_{\dd}(\F_q)\rvert}q^{\langle\dd,\dd\rangle}z^{\dd}.
 \]

\end{theorem}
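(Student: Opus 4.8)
The plan is to read the identity as an equality in $\Q(q)[[z]]$ (equivalently, of power series in $q^{-1}$ and $z$) and to strip off the plethystic exponential. Since $\Log_{q,z}$ inverts $\Exp_{q,z}$, the theorem is equivalent to
\[
 \Log_{q,z}\left(\sum_{\dd}\frac{\lvert\Lambda_{\dd}(\F_q)\rvert}{\lvert G_{\dd}(\F_q)\rvert}q^{\langle\dd,\dd\rangle}z^{\dd}\right)=\frac{1}{1-q^{-1}}\sum_{\dd}A_{\dd}(q^{-1})z^{\dd},
\]
so everything reduces to computing the stacky point count of the Lusztig nilpotent variety $\Lambda_{\dd}$ and isolating its ``indecomposable'' part.

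The first computation I would carry out is the point count of the moment fibre by a Fourier transform (dimensional reduction) on the cotangent space $E_{\overline{Q},\dd}=T^*E_{Q,\dd}$. Writing the moment map $\mu(x,x^*)=\sum_{a}[x_a,x_{a^*}]$ and averaging a nontrivial additive character $\psi$ of $\F_q$ against $\langle y,\mu(x,x^*)\rangle$ over $y\in\g_{\dd}=\prod_i\gl_{\dd_i}$ imposes the preprojective relation; the $x^*$-variable then enters linearly and its summation forces $y$ to lie in $\End_Q(x)$. Tracking the elementary volume factors, this yields for the full fibre the clean model formula $\lvert\mu^{-1}(0)_{\dd}(\F_q)\rvert\,q^{\langle\dd,\dd\rangle}/\lvert G_{\dd}(\F_q)\rvert=\sum_{[M]}\lvert\End M\rvert/\lvert\Aut M\rvert$, the sum running over isomorphism classes of $Q$-representations of dimension $\dd$. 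The weight here is the decisive one: for an indecomposable $M$ the ring $\End M$ is local with residue field $\F_{q^s}$, whence $\lvert\End M\rvert/\lvert\Aut M\rvert=1/(1-q^{-s})$, which is exactly the shape reproduced by the Adams operations $\psi_n$ inside $\Log_{q,z}$ and which explains the eventual factor $\tfrac{1}{1-q^{-1}}$.

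The genuine content is to pass from the full fibre to the nilpotent Lagrangian $\Lambda_{\dd}\subset\mu^{-1}(0)$ and to see that this replaces $q$ by $q^{-1}$. Here I would use that $\Lambda_{\dd}$ is half-dimensional ($\dim\Lambda_{\dd}=\dim E_{Q,\dd}$) and is the nilpotent locus contracted by the scaling $x^*\mapsto t x^*$; combining the localization for this action with the $2$-Calabi--Yau (Poincaré/Koszul) duality of the preprojective algebra turns the twisted stacky count of $\Lambda_{\dd}$ into the multiplicative reciprocal, evaluated at $q^{-1}$, of the series of the previous paragraph. Feeding this through Hua's formula (Theorem~\ref{formulaHua}), which identifies that series with $\Exp_{q,z}(\tfrac{1}{q-1}\sum_{\dd}A_{\dd}(q)z^{\dd})$, and using that $q\mapsto q^{-1}$ commutes with $\Exp_{q,z}$ (it commutes with every $\psi_n$) together with $\Exp_{q,z}(f)^{-1}=\Exp_{q,z}(-f)$, the reciprocal becomes
\[
\Exp_{q,z}\left(-\tfrac{1}{q^{-1}-1}\sum_{\dd}A_{\dd}(q^{-1})z^{\dd}\right)=\Exp_{q,z}\left(\tfrac{1}{1-q^{-1}}\sum_{\dd}A_{\dd}(q^{-1})z^{\dd}\right),
\]
which is the right-hand side of the theorem; here the sign is automatic, and the Galois descent relating indecomposables over $\F_q$ to the absolutely indecomposable counts $A_{\dd}$ is precisely the passage between the two plethystic exponentials in \eqref{plethystic}.

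The step I expect to be the main obstacle is the reciprocity in the third paragraph: proving rigorously that the nilpotency condition carving out $\Lambda_{\dd}$ effects exactly the multiplicative reciprocal at $q^{-1}$. The Fourier transform alone computes $\mu^{-1}(0)$ cleanly, but the nilpotent cutoff interacts with the character sum in a way that is not a naive substitution; controlling it requires the Lagrangian/$2$-Calabi--Yau structure of $\Pi_Q$ (or, concretely, a careful Bia\l ynicki--Birula stratification of $\Lambda_{\dd}$ together with the degree shift recorded by $q^{\langle\dd,\dd\rangle}$) to guarantee both the correct sign and the correct power of $q$, so that one lands on $\tfrac{1}{1-q^{-1}}$ and not on its inverse. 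Once this reciprocity is established, the remaining identifications are routine consequences of Hua's formula and the plethystic relations \eqref{plethystic}.
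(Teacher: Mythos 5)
The paper itself does not prove Theorem \ref{ptsnil}: it quotes it from Bozec--Schiffmann--Vasserot \cite{BSV}, so your attempt has to be measured against the proof given there. Your formal reductions are correct as far as they go: the count $\lvert\mu^{-1}_{\dd}(0)(\F_q)\rvert\, q^{\langle\dd,\dd\rangle}/\lvert G_{\dd}(\F_q)\rvert=\sum_{[M]}\lvert\End M\rvert/\lvert\Aut M\rvert$ (a rank computation for the linear map $x^*\mapsto\mu(x,x^*)$, whose image has annihilator $\End(x)$), the local-ring observation for indecomposables, and the fact that, through Hua's formula together with $\Exp_{q,z}(f)^{-1}=\Exp_{q,z}(-f)$ and $\psi_n(q^{-1})=q^{-n}$, the theorem is \emph{equivalent} to the assertion that $\sum_{\dd}\lvert\Lambda_{\dd}(\F_q)\rvert\, q^{\langle\dd,\dd\rangle}z^{\dd}/\lvert G_{\dd}(\F_q)\rvert$ is the multiplicative inverse, evaluated at $q^{-1}$, of the stacky count of \emph{all} representations. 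But that assertion \emph{is} the theorem, and your justification of it---localization for the scaling of $x^*$ plus the $2$-Calabi--Yau structure of $\Pi_Q$---fails as stated, as you half-concede. The $\C^*$-action $x^*\mapsto tx^*$ has fixed locus $E_{Q,\dd}\times\{0\}$ inside $\mu^{-1}(0)$ and contracts the entire fibre onto it as $t\rightarrow 0$; it does not isolate $\Lambda_{\dd}$, which is the locus of \emph{nilpotent} $\overline{Q}$-representations (the null cone for $G_{\dd}$), so no Bia\l{}ynicki--Birula stratification for this action carves out $\Lambda_{\dd}$; and converting a duality statement into an identity of $\F_q$-point counts would require purity inputs that are not available at this level (cohomological arguments in this spirit came only later, in work of Davison). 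In \cite{BSV} the identity is proved by an entirely different and lengthy route: a direct stratified count of the (semi-)nilpotent varieties, fibred over Jordan-type/partition data, followed by a Hua-style combinatorial resummation, the strictly nilpotent case being the most delicate. The crux of your proof is therefore missing, exactly where you predicted it would be.

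There is also a bookkeeping slip, which matters precisely because the missing step is supposed to pin down powers of $q$. The ``series of the previous paragraph'' in your third paragraph is the $\lvert\End\rvert$-weighted count, and by Hua it equals $\Exp_{q,z}\bigl(\tfrac{q}{q-1}\sum_{\dd}A_{\dd}(q)z^{\dd}\bigr)$, not $\Exp_{q,z}\bigl(\tfrac{1}{q-1}\sum_{\dd}A_{\dd}(q)z^{\dd}\bigr)$; the latter is the \emph{unweighted} count $\sum_{[M]}z^{\dim M}/\lvert\Aut M\rvert$. (Test on the quiver with one vertex and no arrows: $\sum_d q^{d^2}z^d/\lvert\GL_d(\F_q)\rvert=\Exp_{q,z}\bigl(\tfrac{qz}{q-1}\bigr)$, while $\sum_d z^d/\lvert\GL_d(\F_q)\rvert=\Exp_{q,z}\bigl(\tfrac{z}{q-1}\bigr)$, and only the reciprocal at $q^{-1}$ of the \emph{latter} reproduces $\sum_d q^{d^2}z^d/\lvert\GL_d(\F_q)\rvert$, which in this example is the left-hand side of the theorem since $\Lambda_d$ is a point.) Taking the reciprocal at $q^{-1}$ of the weighted series would instead give $\Exp_{q,z}\bigl(\tfrac{1}{q-1}\sum_{\dd}A_{\dd}(q^{-1})z^{\dd}\bigr)$, which differs from the theorem by a factor of $q$ inside the plethystic exponential; your final display comes out right only because you silently applied the unweighted Hua identification to the weighted series. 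The reciprocity you need is: the twisted $\Lambda$-count is the reciprocal at $q^{-1}$ of the \emph{unweighted} stacky count. This factor-of-$q$ ambiguity is exactly the issue you flag at the end, and it cannot be settled without supplying the missing argument.
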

The interesting point in this formula is that only non-positive powers of $q$ appear in the left hand side. It implies that $q\mapsto \frac{\lvert\Lambda_{\dd}(\F_q)\rvert}{\lvert G_{\dd}(\F_q)\rvert}q^{\langle\dd,\dd\rangle}$ is a polynomial in $q^{-1}$. The appearance of $A_{Q,\dd}(q^{-1})$ instead of $A_{Q,\dd}(q)$ in Theorem \ref{ptsnil} suggests that Theorem \ref{recipr} concerning the reciprocal of Kac polynomials $q^{\deg A_{Q,\dd}(q)}A_{Q,\dd}(q^{-1})$ should have a geometric interpretation. 

The point count of the zero-level of the moment map also has something to do with Kac polynomials:
\begin{theorem}[{\cite[Formula (1.4)]{BSV}}]
For any quiver $Q$, we have
\[
 \sum_{\dd}\frac{\lvert\mu^{-1}_{\dd}(0)(\F_q)\rvert}{\lvert G_{\dd}(\F_q)\rvert}=\Exp_{q,z}\left(\frac{q}{q-1}\sum_{\dd}A_{\dd}(q)z^{\dd}\right).
\]
\end{theorem}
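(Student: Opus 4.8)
The plan is to compute the stacky point count $|\mu^{-1}_\dd(0)(\F_q)|/|G_\dd(\F_q)|$ explicitly, reduce it to a weighted count of ordinary $Q$-representations, and then match the resulting generating series against Hua's formula (Theorem \ref{formulaHua}). First I would exploit the cotangent-bundle structure of the doubled quiver: writing a point of $\Rep_\dd(\overline Q)$ as $(x,x^*)$ with $x\in\Rep_\dd(Q)$ defining a representation $M$ and $x^*$ in the cotangent directions, the moment map $\mu(x,x^*)=\sum_a[x_a,x_{a^*}]$ is linear in $x^*$ once $x$ is fixed. Under the trace pairings $\Rep_\dd(Q^{\op})\cong\Rep_\dd(Q)^*$ and $\gl_\dd\cong\gl_\dd^*$, the map $x^*\mapsto\mu(x,x^*)$ is the transpose of the differential $d_M\colon\gl_\dd\to\Rep_\dd(Q)$, $g\mapsto(g_jx_a-x_ag_i)_a$, of the orbit map, whose kernel is $\End_Q(M)$ and cokernel $\Ext^1_Q(M,M)$. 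Hence the fibre of $\mu$ over a fixed $M$ is a linear space of dimension $\dim\Rep_\dd(Q)-\dim\gl_\dd+\dim\End_Q(M)=\dim\Ext^1_Q(M,M)$, and summing over $G_\dd$-orbits (each of size $|G_\dd|/|\Aut_Q(M)|$) gives the clean formula
\[
 \frac{|\mu^{-1}_\dd(0)(\F_q)|}{|G_\dd(\F_q)|}=\sum_{[M]\colon\dim M=\dd}\frac{q^{\dim\Ext^1_Q(M,M)}}{|\Aut_Q(M)|}=q^{-\langle\dd,\dd\rangle}\sum_{[M]\colon\dim M=\dd}\frac{|\End_Q(M)|}{|\Aut_Q(M)|},
\]
using $\dim\End_Q(M)-\dim\Ext^1_Q(M,M)=\langle\dd,\dd\rangle$.

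Next I would evaluate the endomorphism-weighted count by Krull--Schmidt. Writing $M\cong\bigoplus_N N^{\oplus m_N}$ over pairwise non-isomorphic indecomposables, one has $\End_Q(M)/\mathrm{rad}\cong\prod_N\mathrm{Mat}_{m_N}(\F_{q^{d_N}})$ with $d_N=[\End_Q(N)/\mathrm{rad}:\F_q]$, so the weight factorises as $|\End_Q(M)|/|\Aut_Q(M)|=\prod_N\prod_{k=1}^{m_N}(1-q^{-d_Nk})^{-1}$. Euler's $q$-binomial identity $\sum_{m\geq0}x^m/\prod_{k=1}^m(1-t^k)=\prod_{j\geq0}(1-xt^j)^{-1}$ then turns the untwisted series into the product $\prod_{[N]}\prod_{j\geq0}(1-z^{\dd_N}q^{-d_Nj})^{-1}$, whose plethystic logarithm is $\sum_{[N]}\sum_{l\geq1}\tfrac1l\,z^{l\dd_N}/(1-q^{-d_Nl})$. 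The factor $1/(1-q^{-d_N})=\sum_{k\geq0}q^{-d_Nk}$ indexed by the residue field $\F_{q^{d_N}}$ of $\End_Q(N)$ is exactly the shape that the second equality in \eqref{plethystic} exploits: passing from the sum over $\F_q$-indecomposables to the absolutely indecomposable counts $A_\dd(q)$ is Galois descent, the $d_N$ Galois-conjugate absolute summands each contributing the base factor $\tfrac1{1-q^{-1}}=\tfrac q{q-1}$, which is how the prefactor $\tfrac q{q-1}\sum_\dd A_\dd(q)z^\dd$ inside $\Exp_{q,z}$ should emerge.

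The hard part will be reinstating the Euler-form twist $q^{-\langle\dd,\dd\rangle}$, because $\langle\dd,\dd\rangle$ is quadratic and its cross terms $\langle\dd_N,\dd_{N'}\rangle$ couple distinct indecomposable summands, so the twist cannot simply be distributed across the Krull--Schmidt factorisation (already for a single loop, where the twist is trivial, the Galois-descent rearrangement of $\F_q$-counts under the Adams operations of $\Exp_{q,z}$ is delicate). To handle this rigorously I would either work inside the Ringel--Hall algebra of $Q$ with its Euler-form-twisted multiplication, where $q^{-\langle\dd,\dd\rangle}$ becomes part of the associative product and the plethystic factorisation survives, or — more in the spirit of this paper — run the Fourier/additive-character computation $|\mu^{-1}_\dd(0)|=q^{-\dim\gl_\dd^0}\sum_{\theta\in\gl_\dd^0}\sum_{x,x^*}\psi(\langle\theta,\mu(x,x^*)\rangle)$, sum the arrow contributions, and organise the sum over $\theta$ by its Jordan type (a tuple of partitions $\pi=(\pi^i)_{i\in I}$). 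The centraliser counts then contribute the denominators $q^{\langle\pi^i,\pi^i\rangle}\prod_k\prod_{j=1}^{m_k(\pi^i)}(1-q^{-j})$ and the arrow sums the numerators $\prod_{i\to j}q^{\langle\pi^i,\pi^j\rangle}$, so this reproduces the inner series of Theorem \ref{formulaHua} after summing over the eigenvalue data of $\theta$ (where Galois descent enters); Hua's identity $\sum_\dd A_\dd z^\dd=(q-1)\Log_{q,z}(\cdots)$ then delivers $\sum_\dd|\mu^{-1}_\dd(0)(\F_q)|\,|G_\dd(\F_q)|^{-1}z^\dd=\Exp_{q,z}\!\big(\tfrac q{q-1}\sum_\dd A_\dd(q)z^\dd\big)$. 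The main obstacle throughout is precisely the bookkeeping that simultaneously matches the Galois-descent structure and the quadratic twist to the prefactor $\tfrac q{q-1}$.
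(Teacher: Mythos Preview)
The paper does not prove this theorem: it is quoted verbatim as Formula~(1.4) of \cite{BSV} and used only as motivation in Section~\ref{geominterp}. There is therefore no ``paper's own proof'' to compare against; the result is imported from the literature (originally Crawley-Boevey--Van den Bergh \cite{MR2038196} and Mozgovoy, in the form recorded in \cite{BSV}).

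As for the proposal itself: your first approach is correct up to and including the identity
\[
\frac{|\mu^{-1}_\dd(0)(\F_q)|}{|G_\dd(\F_q)|}=\sum_{[M]:\dim M=\dd}\frac{q^{\dim\Ext^1_Q(M,M)}}{|\Aut_Q(M)|},
\]
and your Krull--Schmidt factorisation of $|\End_Q(M)|/|\Aut_Q(M)|$ is valid (the off-diagonal $\Hom(N,N')$ lie in the radical, so the ratio only sees $\prod_N\mathrm{Mat}_{m_N}(\F_{q^{d_N}})$). You are also right that the quadratic twist $q^{-\langle\dd,\dd\rangle}$ genuinely obstructs this route: $\dim\Ext^1(M,M)$ does not decompose additively over the Krull--Schmidt summands, so the generating series does not factor as an infinite product over indecomposables. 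This is a real gap, not a bookkeeping issue, and the first approach cannot be completed as stated.

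Your second approach --- the Fourier/character-sum computation summing $\psi(\langle\theta,\mu(x,x^*)\rangle)$ over $\theta\in\gl_\dd$, integrating out $x^*$ to force $\theta$ to commute with $x$, then organising by the Jordan type of $\theta$ --- is exactly the argument used in \cite{MR2038196} and \cite{BSV} to establish the formula, and it does produce Hua's series directly (the eigenvalue data supplies the Adams operations needed for $\Exp_{q,z}$, and the factor $q/(q-1)$ arises from the extra geometric series $\sum_{j\geq 0}q^{-j}$ compared with the bare count in \eqref{plethystic}). So the plan is sound, but what you have written is an outline rather than a proof: the passage from the commuting-pair count to Hua's partition sum, and the Galois-descent bookkeeping matching $I_{Q,\dd}$ to $A_{Q,\dd}$ under $\Exp_{q,z}$, each require a page of honest computation that you have only gestured at.
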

Because of this formula, we can expect some interpretation of Conjecture $\ref{conjecture}$ and Theorem \ref{thmconv} in terms of some geometric properties of $\mu^{-1}_{\dd}(0)$ when we increase the number of arrows.

\section{Distribution of the coefficients of Kac polynomials}
In this Section, we explore global properties of Kac polynomials of quivers. Motivated by the Hausel--Rodriguez-Villegas paper \cite{MR3364745} and the interpretation of Kac polynomials in terms of Poincaré polynomials of some quiver varieties (at least for indivisible dimension vectors), Theorem \ref{CBtheorem}, we were led to study the distribution of the coefficients of Kac polynomials when we increase the number of arrows. More precisely, to a quiver $Q=(I,\Omega)$ and a dimension vector $\dd\in\N^I$, we plot on a graph $G_{Q,\dd,0}$ the renormalized even coefficients of $A_{Q,\dd}(q)$, that is the points $\left(\frac{2j}{\deg A_{Q,\dd}(q)},\frac{a_{Q,\dd,2j}}{a_{Q,\dd}}\right)$ where $A_{Q,\dd}(q)=q^{N_{\dd}}\sum_{j=0}^{\deg A_{Q,\dd}(q)}a_{Q,\dd,j}q^j$ ($N_{\dd}$ is some nonnegative integer, the valuation of $A_{Q,\dd}$, and $a_{Q,\dd,0}\neq 0$) and $a_{Q,\dd}=\max_{0\leq j\leq \deg A_{Q,\dd}(q)}\limits a_{Q,\dd,j}$ and on $G_{Q,\dd,1}$ the renormalized odd coefficients of the polynomial $A_{Q,\dd}(q)$, which are the points $\left(\frac{2j+1}{\deg A_{Q,\dd}(q)},\frac{a_{Q,\dd,2j+1}}{a_{Q,\dd}}\right)$. We make the following conjecture. By abuse, we also let $G_{Q,\dd, a}$ be the piecewise affine curve obtained by joining the dots of $G_{Q,\dd,a}$ for $a=0,1$. Let $\underline{m}\in\N_{\geq 1}^{I}$. For $u\in \N$, we let $u\underline{m}=(um_{\alpha})_{\alpha\in\Omega}$.

\begin{conj}
\label{conjdistrib}
  The curves $G_{Q_{u\underline{m}},\dd}$ converge to a continuous curve when $u\rightarrow \infty$.
\end{conj}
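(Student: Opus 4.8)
The plan is to reduce the statement to an asymptotic analysis of the coefficient sequence of a single rational function, using the canonical decomposition of Theorem \ref{thmtech}. Writing $\underline{n}=u\underline{m}$, that theorem gives
\[
 A_{Q_{u\underline{m}},\dd}(q)=\frac{\sum_{j=0}^r q^{l_j(u\underline{m})}P_j(q)}{Q(q)},
\]
where each exponent $l_j(u\underline{m})$ is affine in $u$ with nonnegative linear part, and the degree $L_u:=\deg A_{Q_{u\underline{m}},\dd}$ is affine in $u$ with positive leading coefficient by the degree formula $L_u=1-\langle\dd,\dd\rangle_{u\underline{m}}$. First I would expand $1/Q(q)=\sum_{k\geq 0}b_kq^k$; since every root of $Q$ is a root of unity, $b_k$ is a quasi-polynomial in $k$ (a finite sum of terms $\zeta^{-k}$ times polynomials in $k$, by partial fractions). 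Hence the $k$-th coefficient $a_k$ of $q^{-N_{\dd}}A_{Q_{u\underline{m}},\dd}(q)$ is a finite sum, over the pieces $j$ with $l_j(u\underline{m})\leq k$, of shifted quasi-polynomials in $k$. Once divided by $L_u$, the breakpoints $l_j(u\underline{m})$ converge to fixed rationals $x_j:=L(l_j)(\underline{m})/L(\deg)(\underline{m})\in[0,1]$; that these lie in $[0,1]$, with the degree-controlling index giving $x_j=1$, follows exactly from the linear-part comparison carried out in the proof of Theorem \ref{recipr}.

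Second, I would identify the dominant contribution. The growth of $a_k$ in $k$ is governed by the root of unity of $Q$ of highest multiplicity: if this is $q=1$ with multiplicity $m$, then the leading term of $b_k$ is a polynomial of degree $m-1$ in $k$ with no oscillation, and on each rescaled interval $(x_j,x_{j+1})$ the normalized coefficient $a_k/a_{\dd}$, as a function of $x=k/L_u$, converges to an explicit polynomial in $x$ of degree $m-1$. Because $Q$ does not depend on $u$, the pole order at $q=1$ is a fixed integer, so $a_{\dd}=\max_k a_k$ grows like $u^{m-1}$ (attained in the bulk) and the vertical normalization is stable, giving a profile valued in $[0,1]$. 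I would then check that the limiting piecewise-polynomial function $f$ is continuous across each $x_j$: switching on a new piece $P_j$ at $k=l_j(u\underline{m})$ adds a contribution of initial size $O(1)$, hence $O(u^{-(m-1)})\to 0$ after normalization, so no jump survives in the limit.

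The even/odd split into $G_{Q_{u\underline{m}},\dd,0}$ and $G_{Q_{u\underline{m}},\dd,1}$ is designed to absorb the root $q=-1$, whose contribution oscillates with period two; after fixing the parity of $k$ this contribution becomes a genuine polynomial in $k$, so each of the two curves converges separately. To upgrade pointwise convergence of the rescaled coefficients to convergence of the interpolating curves to a \emph{continuous} curve, I would establish a uniform Lipschitz bound: since $a_k\approx u^{m-1}g(k/L_u)$ for a piecewise-smooth $g$, the successive differences satisfy $|a_{k+2}-a_k|/a_{\dd}=O(1/L_u)$, so the piecewise-affine curves (whose horizontal steps are $2/L_u\to 0$) are uniformly equicontinuous. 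Combined with pointwise convergence on the dense set of rescaled rationals and the Arzel\`a--Ascoli theorem, this yields uniform convergence to the continuous limit $f$.

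The hard part will be justifying that $q=1$ (together with $q=-1$, handled by the parity split) truly carries the dominant growth, i.e. that no primitive $d$-th root of unity with $d\geq 3$ occurs in $Q$ with multiplicity equalling or exceeding that of $q=1$. If such a root dominated, the coefficients would oscillate with period $d$ and the curves $G_{Q_{u\underline{m}},\dd,a}$ would fail to converge for either parity $a$, so the entire statement hinges on this pole-order comparison. I expect this to require a genuine input about Hua's formula (Theorem \ref{formulaHua}) — controlling the order of the pole of the plethystic logarithm at each root of unity — rather than the formal manipulations that suffice for Theorems \ref{thmconv} and \ref{recipr}; this is presumably why the statement is only conjectured.
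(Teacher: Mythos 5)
This statement is Conjecture \ref{conjdistrib}: the paper offers no proof of it at all, only the computational evidence of the plotted graphs, so there is no argument in the paper to compare yours against. The right question is therefore whether your sketch closes the problem, and — as you candidly admit in your last paragraph — it does not. The formal part of your reduction is sound and is consistent with the paper's machinery: restricting Theorem \ref{thmtech} to the ray $\underline{n}=u\underline{m}$, regrouping the $l_j$ by their restrictions to the ray (the same regrouping step as in the proof of Theorem \ref{recipr}), expanding $1/Q(q)$ into quasi-polynomial coefficients, and using the linear-part comparison from the proof of Theorem \ref{recipr} to place the rescaled breakpoints $x_j$ in $[0,1]$ are all legitimate consequences of what the paper proves. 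But the crux on which everything rests — that the pole of $Q$ at $q=1$ (with $q=-1$ absorbed by the parity split) strictly dominates in multiplicity every primitive $d$-th root of unity with $d\geq 3$ — is precisely the genuinely open input, and nothing in the formal manipulation of Hua's formula that yields Theorem \ref{thmtech} controls pole orders at individual roots of unity. Your own diagnosis of this is correct; a sketch whose decisive step is flagged as unproved is a research program, not a proof, which is presumably why the paper leaves the statement as a conjecture.

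Two further soft spots deserve naming even granting the pole-order dominance. First, the polynomials $P_j$ have mixed signs (see the displayed formulas for $A_{K_r,(2,2)}$ or $A_{S_g,3,0}$ in Section \ref{examples}), so the top-degree quasi-polynomial terms of distinct pieces can cancel on a rescaled subinterval; your claims that $a_{\dd}=\max_k a_k$ grows like $u^{m-1}$ and that $|a_{k+2}-a_k|/a_{\dd}=O(1/L_u)$ both presuppose that no such degeneration occurs, and the positivity of Theorem \ref{HLRV} only bounds coefficients below by zero — it does not force the maximum to attain the generic growth rate, so the vertical normalization could in principle blow up the limit profile or destroy your equicontinuity bound. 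Second, your continuity estimate $O(u^{-(m-1)})$ at the breakpoints is vacuous when $m=1$, where a freshly switched-on piece contributes at the same order as the bulk and a jump could survive; and distinct breakpoints $x_j$ may coincide in the limit even when the $l_j$ are distinct affine functions, so the piecewise description needs to handle merging. None of these kills the strategy — the examples in the paper are consistent with it — but each is an unproved step, and the first two are where I would expect genuine difficulty beyond the formal framework of Theorems \ref{thmtech} and \ref{recipr}.
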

\begin{remark}
 \begin{enumerate}
  \item The difference with Theorem \ref{thmconv} is that here, we need to fix a direction $\underline{m}$ to have the convergence.
  \item In \cite{MR3364745}, the authors studied the distribution of the Betti numbers of semiprojective hyperkähler varieties by taking a limit involving a different operation on the quiver than ours: we increase the number of arrows without changing the set of vertices while the authors of \emph{op. cit.} consider growing complete graphs with dimension vector one at each vertex, or the tennis-racket quiver with dimension vector $(1,n)$, $n$ being the dimension at the loop-vertex, and let $n$ goes to infinity.
 \end{enumerate}
\end{remark}
To illustrate Conjecture \ref{conjdistrib}, we give the graphs obtained for some specific quivers.

\begin{tikzpicture}
\begin{axis}[
    title={Quiver $S_{g}$ with $d=5$, $g=10$},
    xlabel={},
    ylabel={},
    xmin=0, xmax=1,
    ymin=0, ymax=1,
    xtick={0,1},
    ytick={0,1},
    legend pos=north east,
    ymajorgrids=true,
    grid style=dashed,
]

\addplot[
    color=blue,
    mark=,
    ]
    coordinates {
    (0, 1/1990)(1/90, 1/995)(1/45, 7/1990)(1/30, 17/1990)(2/45, 33/1990)(1/18, 57/1990)(1/15, 91/1990)(7/90, 68/995)(4/45, 97/995)(1/10, 133/995)(1/9, 353/1990)(11/90, 453/1990)(2/15, 563/1990)(13/90, 681/1990)(7/45, 402/995)(1/6, 93/199)(8/45, 1057/1990)(17/90, 591/995)(1/5, 1303/1990)(19/90, 708/995)(2/9, 152/199)(7/30, 323/398)(11/45, 170/199)(23/90, 355/398)(4/15, 1839/1990)(5/18, 946/995)(13/45, 967/995)(3/10, 982/995)(14/45, 991/995)(29/90, 994/995)(1/3, 1983/1990)(31/90, 1969/1990)(16/45, 973/995)(11/30, 383/398)(17/45, 1877/1990)(7/18, 1833/1990)(2/5, 892/995)(37/90, 1731/1990)(19/45, 838/995)(13/30, 809/995)(4/9, 1559/1990)(41/90, 1499/1990)(7/15, 1437/1990)(43/90, 275/398)(22/45, 656/995)(1/2, 624/995)(23/45, 237/398)(47/90, 561/995)(8/15, 1059/1990)(49/90, 997/1990)(5/9, 468/995)(17/30, 438/995)(26/45, 409/995)(53/90, 381/995)(3/5, 707/1990)(11/18, 131/398)(28/45, 303/995)(19/30, 559/1990)(29/45, 103/398)(59/90, 473/1990)(2/3, 433/1990)(61/90, 198/995)(31/45, 361/1990)(7/10, 164/995)(32/45, 297/1990)(13/18, 134/995)(11/15, 241/1990)(67/90, 108/995)(34/45, 193/1990)(23/30, 171/1990)(7/9, 151/1990)(71/90, 133/1990)(4/5, 58/995)(73/90, 101/1990)(37/45, 87/1990)(5/6, 37/995)(38/45, 63/1990)(77/90, 53/1990)(13/15, 22/995)(79/90, 18/995)(8/9, 29/1990)(9/10, 23/1990)(41/45, 9/995)(83/90, 7/995)(14/15, 1/199)(17/18, 7/1990)(43/45, 1/398)(29/30, 3/1990)(44/45, 1/995)(89/90, 1/1990)(1, 0)};
    \addlegendentry{even coefficients}
    
    \addplot[
    color=red,
    mark=,
    ]
    coordinates {
 (1/180, 1/995)
 (1/60, 7/1990)
 (1/36, 17/1990)
 (7/180, 33/1990)
 (1/20, 57/1990)
 (11/180, 91/1990)
 (13/180, 68/995)
 (1/12, 97/995)
 (17/180, 133/995)
 (19/180, 353/1990)
 (7/60, 453/1990)
 (23/180, 563/1990)
 (5/36, 681/1990)
 (3/20, 402/995)
 (29/180, 93/199)
 (31/180, 1057/1990)
 (11/60, 591/995)
 (7/36, 1303/1990)
 (37/180, 708/995)
 (13/60, 152/199)
 (41/180, 323/398)
 (43/180, 170/199)
 (1/4, 355/398)
 (47/180, 1839/1990)
 (49/180, 946/995)
 (17/60, 967/995)
 (53/180, 982/995)
 (11/36, 991/995)
 (19/60, 994/995)
 (59/180, 1983/1990)
 (61/180, 1969/1990)
 (7/20, 973/995)
 (13/36, 383/398)
 (67/180, 1877/1990)
 (23/60, 1833/1990)
 (71/180, 892/995)
 (73/180, 1731/1990)
 (5/12, 838/995)
 (77/180, 809/995)
 (79/180, 1559/1990)
 (9/20, 1499/1990)
 (83/180, 1437/1990)
 (17/36, 275/398)
 (29/60, 656/995)
 (89/180, 624/995)
 (91/180, 237/398)
 (31/60, 561/995)
 (19/36, 1059/1990)
 (97/180, 997/1990)
 (11/20, 468/995)
 (101/180, 438/995)
 (103/180, 409/995)
 (7/12, 381/995)
 (107/180, 707/1990)
 (109/180, 131/398)
 (37/60, 303/995)
 (113/180, 559/1990)
 (23/36, 103/398)
 (13/20, 473/1990)
 (119/180, 433/1990)
 (121/180, 198/995)
 (41/60, 361/1990)
 (25/36, 164/995)
 (127/180, 297/1990)
 (43/60, 134/995)
 (131/180, 241/1990)
 (133/180, 108/995)
 (3/4, 193/1990)
 (137/180, 171/1990)
 (139/180, 151/1990)
 (47/60, 133/1990)
 (143/180, 58/995)
 (29/36, 101/1990)
 (49/60, 87/1990)
 (149/180, 37/995)
 (151/180, 63/1990)
 (17/20, 53/1990)
 (31/36, 22/995)
 (157/180, 18/995)
 (53/60, 29/1990)
 (161/180, 23/1990)
 (163/180, 9/995)
 (11/12, 7/995)
 (167/180, 1/199)
 (169/180, 7/1990)
 (19/20, 1/398)
 (173/180, 3/1990)
 (35/36, 1/995)
 (59/60, 1/1990)
 (179/180, 0)};
    \addlegendentry{odd coefficients}
    
\end{axis}
\end{tikzpicture}
\begin{tikzpicture}
\begin{axis}[
    title={Quiver $S_{g}$ with $d=5$, $g=5$},
    xlabel={},
    ylabel={},
    xmin=0, xmax=1,
    ymin=0, ymax=1,
    xtick={0,1},
    ytick={0,1},
    legend pos=north east,
    ymajorgrids=true,
    grid style=dashed,
]

\addplot[
    color=red,
    mark=,
    ]
    coordinates {
 (1/80, 1/111)
 (3/80, 7/222)
 (1/16, 17/222)
 (7/80, 11/74)
 (9/80, 28/111)
 (11/80, 14/37)
 (13/80, 19/37)
 (3/16, 24/37)
 (17/80, 85/111)
 (19/80, 191/222)
 (21/80, 69/74)
 (23/80, 217/222)
 (5/16, 221/222)
 (27/80, 73/74)
 (29/80, 106/111)
 (31/80, 101/111)
 (33/80, 63/74)
 (7/16, 175/222)
 (37/80, 80/111)
 (39/80, 145/222)
 (41/80, 65/111)
 (43/80, 115/222)
 (9/16, 101/222)
 (47/80, 29/74)
 (49/80, 1/3)
 (51/80, 21/74)
 (53/80, 53/222)
 (11/16, 22/111)
 (57/80, 6/37)
 (59/80, 29/222)
 (61/80, 23/222)
 (63/80, 3/37)
 (13/16, 7/111)
 (67/80, 5/111)
 (69/80, 7/222)
 (71/80, 5/222)
 (73/80, 1/74)
 (15/16, 1/111)
 (77/80, 1/222)
 (79/80, 0)};
    \addlegendentry{odd coefficients}

\addplot[
    color=blue,
    mark=,
    ]
    coordinates {
(0, 1/222)
 (1/40, 2/111)
 (1/20, 11/222)
 (3/40, 4/37)
 (1/10, 22/111)
 (1/8, 35/111)
 (3/20, 33/74)
 (7/40, 43/74)
 (1/5, 79/111)
 (9/40, 91/111)
 (1/4, 67/74)
 (11/40, 107/111)
 (3/10, 221/222)
 (13/40, 1)
 (7/20, 109/111)
 (3/8, 35/37)
 (2/5, 33/37)
 (17/40, 92/111)
 (9/20, 85/111)
 (19/40, 155/222)
 (1/2, 70/111)
 (21/40, 125/222)
 (11/20, 55/111)
 (23/40, 16/37)
 (3/5, 83/222)
 (5/8, 71/222)
 (13/20, 10/37)
 (27/40, 25/111)
 (7/10, 7/37)
 (29/40, 17/111)
 (3/4, 14/111)
 (31/40, 11/111)
 (4/5, 17/222)
 (33/40, 13/222)
 (17/20, 5/111)
 (7/8, 7/222)
 (9/10, 5/222)
 (37/40, 1/74)
 (19/20, 1/111)
 (39/40, 1/222)
 (1, 1/222)
 };
    \addlegendentry{even coefficients}
    
\end{axis}
\end{tikzpicture}

\begin{tikzpicture}
\begin{axis}[
    title={Quiver $K_{r}$ with $\dd=(2,3)$, $r=4$},
    xlabel={},
    ylabel={},
    xmin=0, xmax=1,
    ymin=0, ymax=1,
    xtick={0,1},
    ytick={0,1},
    legend pos=north east,
    ymajorgrids=true,
    grid style=dashed,
]

\addplot[
    color=blue,
    mark=,
    ]
    coordinates {
 (0, 2/15)
 (1/6, 3/5)
 (1/3, 1)
 (1/2, 13/15)
 (2/3, 7/15)
 (5/6, 1/5)
 (1, 1/15)};
    \addlegendentry{even coefficients}

\addplot[
    color=red,
    mark=,
    ]
    coordinates {
(1/12, 4/15)
 (1/4, 4/5)
 (5/12, 14/15)
 (7/12, 3/5)
 (3/4, 4/15)
 (11/12, 1/15)
 };
    \addlegendentry{odd coefficients}
    
\end{axis}
\end{tikzpicture}
\begin{tikzpicture}
\begin{axis}[
    title={Quiver $K_{r}$ with $\dd=(2,3)$, $r=15$},
    xlabel={},
    ylabel={},
    xmin=0, xmax=1,
    ymin=0, ymax=1,
    xtick={0,1},
    ytick={0,1},
    legend pos=north east,
    ymajorgrids=true,
    grid style=dashed,
]

\addplot[
    color=blue,
    mark=,
    ]
    coordinates {
 (0, 2/1323)
 (1/39, 10/1323)
 (2/39, 29/1323)
 (1/13, 64/1323)
 (4/39, 17/189)
 (5/39, 199/1323)
 (2/13, 103/441)
 (7/39, 64/189)
 (8/39, 202/441)
 (3/13, 772/1323)
 (10/39, 934/1323)
 (11/39, 1081/1323)
 (4/13, 1202/1323)
 (1/3, 1285/1323)
 (14/39, 1)
 (5/13, 1321/1323)
 (16/39, 1285/1323)
 (17/39, 1222/1323)
 (6/13, 1139/1323)
 (19/39, 1042/1323)
 (20/39, 134/189)
 (7/13, 278/441)
 (22/39, 244/441)
 (23/39, 634/1323)
 (8/13, 541/1323)
 (25/39, 454/1323)
 (2/3, 125/441)
 (9/13, 305/1323)
 (28/39, 244/1323)
 (29/39, 64/441)
 (10/13, 148/1323)
 (31/39, 37/441)
 (32/39, 3/49)
 (11/13, 19/441)
 (34/39, 38/1323)
 (35/39, 8/441)
 (12/13, 2/189)
 (37/39, 1/189)
 (38/39, 1/441)
 (1, 1/1323)};
    \addlegendentry{even coefficients}

\addplot[
    color=red,
    mark=,
    ]
    coordinates {
(1/78, 4/1323)
 (1/26, 17/1323)
 (5/78, 43/1323)
 (7/78, 29/441)
 (3/26, 22/189)
 (11/78, 248/1323)
 (1/6, 373/1323)
 (5/26, 523/1323)
 (17/78, 14/27)
 (19/78, 851/1323)
 (7/26, 1007/1323)
 (23/78, 1142/1323)
 (25/78, 415/441)
 (9/26, 1306/1323)
 (29/78, 1)
 (31/78, 1303/1323)
 (11/26, 179/189)
 (35/78, 131/147)
 (37/78, 1088/1323)
 (1/2, 47/63)
 (41/78, 2/3)
 (43/78, 779/1323)
 (15/26, 97/189)
 (47/78, 583/1323)
 (49/78, 493/1323)
 (17/26, 410/1323)
 (53/78, 335/1323)
 (55/78, 10/49)
 (19/26, 214/1323)
 (59/78, 166/1323)
 (61/78, 2/21)
 (21/26, 31/441)
 (5/6, 22/441)
 (67/78, 5/147)
 (23/26, 29/1323)
 (71/78, 17/1323)
 (73/78, 1/147)
 (25/26, 4/1323)
 (77/78, 1/1323)
 };
    \addlegendentry{odd coefficients}
    
\end{axis}
\end{tikzpicture}

\begin{tikzpicture}
\begin{axis}[
    title={Tennis-racket quiver with $\dd=(2,3)$, $n_{\alpha}=10, n_{\beta}=20$},
    xlabel={},
    ylabel={},
    xmin=0, xmax=1,
    ymin=0, ymax=1,
    xtick={0,1},
    ytick={0,1},
    legend pos=north east,
    ymajorgrids=true,
    grid style=dashed,
]

\addplot[
    color=blue,
    mark=,
    ]
    coordinates {
 (0, 1/5985)
 (1/85, 11/5985)
 (2/85, 2/285)
 (3/85, 106/5985)
 (4/85, 43/1197)
 (1/17, 379/5985)
 (6/85, 592/5985)
 (7/85, 842/5985)
 (8/85, 1117/5985)
 (9/85, 281/1197)
 (2/17, 113/399)
 (11/85, 397/1197)
 (12/85, 65/171)
 (13/85, 3/7)
 (14/85, 571/1197)
 (3/17, 629/1197)
 (16/85, 229/399)
 (1/5, 745/1197)
 (18/85, 803/1197)
 (19/85, 41/57)
 (4/17, 1531/1995)
 (21/85, 232/285)
 (22/85, 5134/5985)
 (23/85, 5371/5985)
 (24/85, 1115/1197)
 (5/17, 1913/1995)
 (26/85, 1954/1995)
 (27/85, 5944/5985)
 (28/85, 1)
 (29/85, 1)
 (6/17, 313/315)
 (31/85, 653/665)
 (32/85, 1927/1995)
 (33/85, 1133/1197)
 (2/5, 5536/5985)
 (7/17, 120/133)
 (36/85, 1753/1995)
 (37/85, 341/399)
 (38/85, 142/171)
 (39/85, 965/1197)
 (8/17, 104/133)
 (41/85, 907/1197)
 (42/85, 878/1197)
 (43/85, 283/399)
 (44/85, 820/1197)
 (9/17, 113/171)
 (46/85, 254/399)
 (47/85, 733/1197)
 (48/85, 704/1197)
 (49/85, 75/133)
 (10/17, 34/63)
 (3/5, 617/1197)
 (52/85, 28/57)
 (53/85, 559/1197)
 (54/85, 530/1197)
 (11/17, 167/399)
 (56/85, 472/1197)
 (57/85, 443/1197)
 (58/85, 46/133)
 (59/85, 55/171)
 (12/17, 1781/5985)
 (61/85, 1639/5985)
 (62/85, 1499/5985)
 (63/85, 454/1995)
 (64/85, 1229/5985)
 (13/17, 220/1197)
 (66/85, 976/5985)
 (67/85, 286/1995)
 (4/5, 746/5985)
 (69/85, 641/5985)
 (14/17, 544/5985)
 (71/85, 13/171)
 (72/85, 25/399)
 (73/85, 61/1197)
 (74/85, 244/5985)
 (15/17, 64/1995)
 (76/85, 148/5985)
 (77/85, 37/1995)
 (78/85, 9/665)
 (79/85, 1/105)
 (16/17, 2/315)
 (81/85, 8/1995)
 (82/85, 2/855)
 (83/85, 1/855)
 (84/85, 1/1995)
 (1, 1/5985)};
    \addlegendentry{even coefficients}

\addplot[
    color=red,
    mark=,
    ]
    coordinates {
(1/170, 4/5985)
 (3/170, 23/5985)
 (1/34, 23/1995)
 (7/170, 22/855)
 (9/170, 58/1197)
 (11/170, 32/399)
 (13/170, 713/5985)
 (3/34, 977/5985)
 (1/10, 4/19)
 (19/170, 310/1197)
 (21/170, 368/1197)
 (23/170, 142/399)
 (5/34, 484/1197)
 (27/170, 542/1197)
 (29/170, 200/399)
 (31/170, 94/171)
 (33/170, 716/1197)
 (7/34, 86/133)
 (37/170, 832/1197)
 (39/170, 1483/1995)
 (41/170, 4733/5985)
 (43/170, 556/665)
 (9/34, 5254/5985)
 (47/170, 365/399)
 (49/170, 5659/5985)
 (3/10, 1934/1995)
 (53/170, 656/665)
 (11/34, 1193/1197)
 (57/170, 1)
 (59/170, 1193/1197)
 (61/170, 394/399)
 (63/170, 1942/1995)
 (13/34, 43/45)
 (67/170, 373/399)
 (69/170, 5462/5985)
 (71/170, 5323/5985)
 (73/170, 148/171)
 (15/34, 53/63)
 (77/170, 326/399)
 (79/170, 949/1197)
 (81/170, 920/1197)
 (83/170, 99/133)
 (1/2, 862/1197)
 (87/170, 119/171)
 (89/170, 268/399)
 (91/170, 775/1197)
 (93/170, 746/1197)
 (19/34, 239/399)
 (97/170, 688/1197)
 (99/170, 659/1197)
 (101/170, 10/19)
 (103/170, 601/1197)
 (21/34, 572/1197)
 (107/170, 181/399)
 (109/170, 514/1197)
 (111/170, 485/1197)
 (113/170, 8/21)
 (23/34, 61/171)
 (117/170, 398/1197)
 (7/10, 1846/5985)
 (121/170, 1703/5985)
 (123/170, 1562/5985)
 (25/34, 1424/5985)
 (127/170, 1289/5985)
 (129/170, 386/1995)
 (131/170, 344/1995)
 (133/170, 911/5985)
 (27/34, 796/5985)
 (137/170, 688/5985)
 (139/170, 587/5985)
 (141/170, 26/315)
 (143/170, 82/1197)
 (29/34, 67/1197)
 (147/170, 6/133)
 (149/170, 214/5985)
 (151/170, 166/5985)
 (9/10, 2/95)
 (31/34, 31/1995)
 (157/170, 22/1995)
 (159/170, 1/133)
 (161/170, 29/5985)
 (163/170, 17/5985)
 (33/34, 1/665)
 (167/170, 4/5985)
 (169/170, 1/5985)
 };
    \addlegendentry{odd coefficients}
    
\end{axis}
\end{tikzpicture}
\begin{tikzpicture}
\begin{axis}[
    title={Tennis-racket quiver with $\dd=(2,3)$, $n_{\alpha}=n_{\beta}=10$},
    xlabel={},
    ylabel={},
    xmin=0, xmax=1,
    ymin=0, ymax=1,
    xtick={0,1},
    ytick={0,1},
    legend pos=north east,
    ymajorgrids=true,
    grid style=dashed,
]

\addplot[
    color=blue,
    mark=,
    ]
    coordinates {
 (0, 1/3085)
 (1/55, 11/3085)
 (2/55, 42/3085)
 (3/55, 106/3085)
 (4/55, 43/617)
 (1/11, 379/3085)
 (6/55, 592/3085)
 (7/55, 842/3085)
 (8/55, 1117/3085)
 (9/55, 281/617)
 (2/11, 1693/3085)
 (1/5, 1972/3085)
 (12/55, 2234/3085)
 (13/55, 2471/3085)
 (14/55, 535/617)
 (3/11, 2839/3085)
 (16/55, 2962/3085)
 (17/55, 3044/3085)
 (18/55, 1)
 (19/55, 1)
 (4/11, 3047/3085)
 (21/55, 2977/3085)
 (2/5, 2881/3085)
 (23/55, 553/617)
 (24/55, 2636/3085)
 (5/11, 500/617)
 (26/55, 2359/3085)
 (27/55, 443/617)
 (28/55, 414/617)
 (29/55, 385/617)
 (6/11, 1781/3085)
 (31/55, 1639/3085)
 (32/55, 1499/3085)
 (3/5, 1362/3085)
 (34/55, 1229/3085)
 (7/11, 220/617)
 (36/55, 976/3085)
 (37/55, 858/3085)
 (38/55, 746/3085)
 (39/55, 641/3085)
 (8/11, 544/3085)
 (41/55, 91/617)
 (42/55, 75/617)
 (43/55, 61/617)
 (4/5, 244/3085)
 (9/11, 192/3085)
 (46/55, 148/3085)
 (47/55, 111/3085)
 (48/55, 81/3085)
 (49/55, 57/3085)
 (10/11, 38/3085)
 (51/55, 24/3085)
 (52/55, 14/3085)
 (53/55, 7/3085)
 (54/55, 3/3085)
 (1, 1/3085)};
    \addlegendentry{even coefficients}

\addplot[
    color=red,
    mark=,
    ]
    coordinates {
(1/110, 4/3085)
 (3/110, 23/3085)
 (1/22, 69/3085)
 (7/110, 154/3085)
 (9/110, 58/617)
 (1/10, 96/617)
 (13/110, 713/3085)
 (3/22, 977/3085)
 (17/110, 252/617)
 (19/110, 1549/3085)
 (21/110, 1833/3085)
 (23/110, 2104/3085)
 (5/22, 2354/3085)
 (27/110, 515/617)
 (29/110, 2759/3085)
 (31/110, 2902/3085)
 (3/10, 3004/3085)
 (7/22, 613/617)
 (37/110, 1)
 (39/110, 613/617)
 (41/110, 602/617)
 (43/110, 2926/3085)
 (9/22, 2819/3085)
 (47/110, 539/617)
 (49/110, 2562/3085)
 (51/110, 2423/3085)
 (53/110, 456/617)
 (1/2, 427/617)
 (57/110, 398/617)
 (59/110, 1846/3085)
 (61/110, 1703/3085)
 (63/110, 1562/3085)
 (13/22, 1424/3085)
 (67/110, 1289/3085)
 (69/110, 1158/3085)
 (71/110, 1032/3085)
 (73/110, 911/3085)
 (15/22, 796/3085)
 (7/10, 688/3085)
 (79/110, 587/3085)
 (81/110, 494/3085)
 (83/110, 82/617)
 (17/22, 67/617)
 (87/110, 54/617)
 (89/110, 214/3085)
 (91/110, 166/3085)
 (93/110, 126/3085)
 (19/22, 93/3085)
 (97/110, 66/3085)
 (9/10, 9/617)
 (101/110, 29/3085)
 (103/110, 17/3085)
 (21/22, 9/3085)
 (107/110, 4/3085)
 (109/110, 1/3085)
 };
    \addlegendentry{odd coefficients}
    
\end{axis}
\end{tikzpicture}
\begin{center}
\begin{tikzpicture}[scale=1]
\begin{axis}[
    height=14cm,
    width=14cm,
    title={Tennis-racket quiver with $\dd=(3,2)$, even coefficients},
    xlabel={},
    ylabel={},
    xmin=0, xmax=1.3,
    ymin=0, ymax=1.3,
    xtick={0,1},
    ytick={0,1},
    legend pos=north east,
    ymajorgrids=true,
    grid style=dashed,
]

\addplot[
    color=blue,
    mark=,
    ]
    coordinates {
 (0, 1/110)
 (2/105, 7/110)
 (4/105, 12/55)
 (2/35, 26/55)
 (8/105, 81/110)
 (2/21, 101/110)
 (4/35, 109/110)
 (2/15, 1)
 (16/105, 1)
 (6/35, 1)
 (4/21, 1)
 (22/105, 1)
 (8/35, 1)
 (26/105, 1)
 (4/15, 1)
 (2/7, 1)
 (32/105, 1)
 (34/105, 1)
 (12/35, 1)
 (38/105, 1)
 (8/21, 1)
 (2/5, 1)
 (44/105, 1)
 (46/105, 1)
 (16/35, 1)
 (10/21, 1)
 (52/105, 1)
 (18/35, 1)
 (8/15, 1)
 (58/105, 1)
 (4/7, 1)
 (62/105, 1)
 (64/105, 1)
 (22/35, 1)
 (68/105, 1)
 (2/3, 1)
 (24/35, 1)
 (74/105, 1)
 (76/105, 1)
 (26/35, 1)
 (16/21, 109/110)
 (82/105, 107/110)
 (4/5, 103/110)
 (86/105, 48/55)
 (88/105, 43/55)
 (6/7, 73/110)
 (92/105, 29/55)
 (94/105, 43/110)
 (32/35, 29/110)
 (14/15, 17/110)
 (20/21, 9/110)
 (34/35, 2/55)
 (104/105, 1/110)};
    \addlegendentry{$n_{\alpha}=6, n_{\beta}=40$}

\addplot[
    color=red,
    mark=,
    ]
    coordinates {
(0, 1/4878)
 (2/149, 7/4878)
 (4/149, 4/813)
 (6/149, 19/1626)
 (8/149, 37/1626)
 (10/149, 32/813)
 (12/149, 305/4878)
 (14/149, 455/4878)
 (16/149, 36/271)
 (18/149, 889/4878)
 (20/149, 589/2439)
 (22/149, 251/813)
 (24/149, 931/2439)
 (26/149, 745/1626)
 (28/149, 2615/4878)
 (30/149, 997/1626)
 (32/149, 1676/2439)
 (34/149, 1844/2439)
 (36/149, 1994/2439)
 (38/149, 4241/4878)
 (40/149, 2222/2439)
 (42/149, 767/813)
 (44/149, 1573/1626)
 (46/149, 800/813)
 (48/149, 2425/2439)
 (50/149, 4873/4878)
 (52/149, 2437/2439)
 (54/149, 2429/2439)
 (56/149, 4829/4878)
 (58/149, 2396/2439)
 (60/149, 2374/2439)
 (62/149, 2348/2439)
 (64/149, 515/542)
 (66/149, 4565/4878)
 (68/149, 1495/1626)
 (70/149, 2197/2439)
 (72/149, 2146/2439)
 (74/149, 2089/2439)
 (76/149, 4051/4878)
 (78/149, 652/813)
 (80/149, 209/271)
 (82/149, 1801/2439)
 (84/149, 1717/2439)
 (86/149, 3259/4878)
 (88/149, 171/271)
 (90/149, 2893/4878)
 (92/149, 2705/4878)
 (94/149, 2515/4878)
 (96/149, 775/1626)
 (98/149, 356/813)
 (100/149, 1949/4878)
 (102/149, 883/2439)
 (104/149, 794/2439)
 (106/149, 236/813)
 (108/149, 626/2439)
 (110/149, 1097/4878)
 (112/149, 476/2439)
 (114/149, 91/542)
 (116/149, 233/1626)
 (118/149, 197/1626)
 (120/149, 55/542)
 (122/149, 205/2439)
 (124/149, 335/4878)
 (126/149, 15/271)
 (128/149, 107/2439)
 (130/149, 83/2439)
 (132/149, 7/271)
 (134/149, 31/1626)
 (136/149, 11/813)
 (138/149, 5/542)
 (140/149, 29/4878)
 (142/149, 17/4878)
 (144/149, 1/542)
 (146/149, 2/2439)
 (148/149, 1/4878)
 };
    \addlegendentry{$n_{\alpha}=20, n_{\beta}=20$}
    
\addplot[
    color=green,
    mark=,
    ]
    coordinates {
    (0, 1/31405)
 (2/241, 7/31405)
 (4/241, 24/31405)
 (6/241, 57/31405)
 (8/241, 111/31405)
 (10/241, 192/31405)
 (12/241, 304/31405)
 (14/241, 452/31405)
 (16/241, 641/31405)
 (18/241, 175/6281)
 (20/241, 1159/31405)
 (22/241, 1498/31405)
 (24/241, 1896/31405)
 (26/241, 2358/31405)
 (28/241, 2889/31405)
 (30/241, 3493/31405)
 (32/241, 835/6281)
 (34/241, 988/6281)
 (36/241, 5792/31405)
 (38/241, 6736/31405)
 (40/241, 7772/31405)
 (42/241, 8889/31405)
 (44/241, 916/2855)
 (46/241, 11322/31405)
 (48/241, 2523/6281)
 (50/241, 13944/31405)
 (52/241, 15298/31405)
 (54/241, 303/571)
 (56/241, 18034/31405)
 (58/241, 19394/31405)
 (60/241, 20733/31405)
 (62/241, 4408/6281)
 (64/241, 23304/31405)
 (66/241, 24513/31405)
 (68/241, 25656/31405)
 (70/241, 26722/31405)
 (72/241, 27699/31405)
 (74/241, 28576/31405)
 (76/241, 29342/31405)
 (78/241, 5997/6281)
 (80/241, 30502/31405)
 (82/241, 30898/31405)
 (84/241, 31177/31405)
 (86/241, 31344/31405)
 (88/241, 31404/31405)
 (90/241, 31362/31405)
 (92/241, 6245/6281)
 (94/241, 6200/6281)
 (96/241, 30693/31405)
 (98/241, 30311/31405)
 (100/241, 29861/31405)
 (102/241, 29349/31405)
 (104/241, 28782/31405)
 (106/241, 28167/31405)
 (108/241, 5502/6281)
 (110/241, 2438/2855)
 (112/241, 26098/31405)
 (114/241, 25356/31405)
 (116/241, 24599/31405)
 (118/241, 23834/31405)
 (120/241, 23063/31405)
 (122/241, 22288/31405)
 (124/241, 4302/6281)
 (126/241, 4146/6281)
 (128/241, 3990/6281)
 (130/241, 19171/31405)
 (132/241, 18394/31405)
 (134/241, 17621/31405)
 (136/241, 16853/31405)
 (138/241, 16091/31405)
 (140/241, 15337/31405)
 (142/241, 14592/31405)
 (144/241, 13857/31405)
 (146/241, 1194/2855)
 (148/241, 12424/31405)
 (150/241, 11728/31405)
 (152/241, 11048/31405)
 (154/241, 2077/6281)
 (156/241, 1948/6281)
 (158/241, 1823/6281)
 (160/241, 8511/31405)
 (162/241, 7929/31405)
 (164/241, 7371/31405)
 (166/241, 6838/31405)
 (168/241, 6331/31405)
 (170/241, 1170/6281)
 (172/241, 5394/31405)
 (174/241, 4962/31405)
 (176/241, 414/2855)
 (178/241, 379/2855)
 (180/241, 346/2855)
 (182/241, 63/571)
 (184/241, 629/6281)
 (186/241, 569/6281)
 (188/241, 513/6281)
 (190/241, 2304/31405)
 (192/241, 2061/31405)
 (194/241, 1836/31405)
 (196/241, 148/2855)
 (198/241, 1436/31405)
 (200/241, 252/6281)
 (202/241, 1099/31405)
 (204/241, 952/31405)
 (206/241, 819/31405)
 (208/241, 699/31405)
 (210/241, 591/31405)
 (212/241, 9/571)
 (214/241, 82/6281)
 (216/241, 67/6281)
 (218/241, 54/6281)
 (220/241, 214/31405)
 (222/241, 166/31405)
 (224/241, 126/31405)
 (226/241, 93/31405)
 (228/241, 6/2855)
 (230/241, 9/6281)
 (232/241, 29/31405)
 (234/241, 17/31405)
 (236/241, 9/31405)
 (238/241, 4/31405)
 (240/241, 1/31405)
};
\addlegendentry{$n_{\alpha}=40, n_{\beta}=6$}

\addplot[
    color=purple,
    mark=,
    ]
    coordinates {
(0, 1/2030)
 (2/139, 1/290)
 (4/139, 12/1015)
 (6/139, 57/2030)
 (8/139, 111/2030)
 (10/139, 96/1015)
 (12/139, 61/406)
 (14/139, 227/1015)
 (16/139, 317/1015)
 (18/139, 417/1015)
 (20/139, 149/290)
 (22/139, 1251/2030)
 (24/139, 1447/2030)
 (26/139, 162/203)
 (28/139, 176/203)
 (30/139, 932/1015)
 (32/139, 1937/2030)
 (34/139, 397/406)
 (36/139, 2013/2030)
 (38/139, 1013/1015)
 (40/139, 1)
 (42/139, 1)
 (44/139, 1)
 (46/139, 1)
 (48/139, 1)
 (50/139, 1)
 (52/139, 1)
 (54/139, 1)
 (56/139, 1)
 (58/139, 1)
 (60/139, 2029/2030)
 (62/139, 2027/2030)
 (64/139, 289/290)
 (66/139, 144/145)
 (68/139, 1003/1015)
 (70/139, 996/1015)
 (72/139, 1973/2030)
 (74/139, 1949/2030)
 (76/139, 1919/2030)
 (78/139, 941/1015)
 (80/139, 919/1015)
 (82/139, 893/1015)
 (84/139, 345/406)
 (86/139, 331/406)
 (88/139, 788/1015)
 (90/139, 1489/2030)
 (92/139, 698/1015)
 (94/139, 649/1015)
 (96/139, 598/1015)
 (98/139, 78/145)
 (100/139, 141/290)
 (102/139, 63/145)
 (104/139, 779/2030)
 (106/139, 97/290)
 (108/139, 583/2030)
 (110/139, 17/70)
 (112/139, 41/203)
 (114/139, 67/406)
 (116/139, 27/203)
 (118/139, 107/1015)
 (120/139, 83/1015)
 (122/139, 9/145)
 (124/139, 93/2030)
 (126/139, 33/1015)
 (128/139, 9/406)
 (130/139, 1/70)
 (132/139, 17/2030)
 (134/139, 9/2030)
 (136/139, 2/1015)
 (138/139, 1/2030)
 };
 \addlegendentry{$n_{\alpha}=15, n_{\beta}=30$}

\end{axis}
\end{tikzpicture}
\end{center}
The last conjecture follows from the observation of the various graphs above.
\begin{conj}
The sequences of even and odd Betti numbers are unimodal (that is, increasing and then decreasing)
\end{conj}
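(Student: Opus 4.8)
The plan is to prove the conjecture in the stronger form that the full coefficient sequence of $A_{Q_{\underline{n}},\dd}(q)$ is unimodal, by establishing a Lefschetz-type structure on the cohomology of the associated quiver variety. By Theorem \ref{CBtheorem} for indivisible $\dd$, and by the cohomological interpretation of Theorem \ref{HLRV} recorded in the remark following it for arbitrary $\dd$, one may write $A_{Q_{\underline{n}},\dd}(q)=\sum_{i=0}^{d}b_{2d-2i}(X_s)\,q^i$ for a smooth quiver variety $X_s$ with $\dim X_s=2d$ whose cohomology is concentrated in even degrees and vanishes above degree $2d$. Up to reversal, the coefficient sequence $(a_i)_i$ is therefore the Betti sequence $(b_{2k})_{0\le k\le d}$; the even-indexed coefficients are the Betti numbers in degrees $\equiv 2d\pmod 4$ and the odd-indexed ones those in degrees $\equiv 2d-2\pmod 4$. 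Since any subsequence (in the induced order) of a unimodal sequence is again unimodal, it suffices to prove that $(b_{2k})_k$ is unimodal, which then yields the conjecture for both parity classes at once. As the numerics exhibit an \emph{interior} peak rather than a monotone sequence, the shape cannot come from injectivity of a single degree-raising map throughout, but exactly matches what a Lefschetz $\mathfrak{sl}_2$-action centred at the peak would produce.

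The first and main route I would pursue is to manufacture such an $\mathfrak{sl}_2$-action. Let $\pi\colon X_s\to X_0$ be the projective morphism to the affine quotient. I would apply relative Hard Lefschetz and the decomposition theorem to $\pi$ to obtain a Lefschetz operator $L$ of degree two acting on $H^*(X_s,\C)$, using that $X_s$ retracts onto the compact central fibre $\pi^{-1}(0)$. A complementary input is Davison's purity result (recalled in the introduction, \cite{MR3826830}): $H^*(X_s)$ is pure, so its weight filtration coincides with the degree filtration. Under purity, a curious Hard Lefschetz statement in the spirit of Hausel--Letellier--Rodriguez-Villegas \cite{MR3034296,MR3364745}, asserting isomorphisms $L^{k}\colon \Gr^W_{d-k}\to \Gr^W_{d+k}$, upgrades to genuine Lefschetz isomorphisms in cohomology and forces $(b_{2k})_k$ to increase up to the peak and decrease thereafter, hence to be unimodal.

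Two fallback strategies are worth keeping in reserve. One is to prove the stronger statement that $(b_{2k})_k$ is log-concave, using the $W$-action on the compactly supported cohomology (the remark after Theorem \ref{HLRV}) together with Hua's closed formula (Theorem \ref{formulaHua}); the obstruction here is that the Möbius coefficients in the plethystic logarithm introduce alternating signs that defeat any naive positivity estimate on the $b_{2k}$. The other is to exploit the asymptotic machinery of this paper: one first analyses the limiting series, whose explicit rational-fraction form is furnished by Theorem \ref{thmtech}, and then transfers unimodality to finite $\underline{n}$ by controlling the uniform denominator $Q(q)$ and the affine exponents $l_j(\underline{n})$. The difficulty with this last route is that unimodality is not obviously stable under the specialisations $\underline{n}\mapsto s\underline{r}$, so it would at best handle a single infinite direction. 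The hard part, in every approach, is the Lefschetz step itself: a non-compact symplectic resolution carries no a priori ample class giving Hard Lefschetz, and the correct $\mathfrak{sl}_2$ must be extracted either from the relative Hard Lefschetz for $\pi$ via the decomposition theorem or from the hyperkähler structure through a curious Lefschetz symmetry, neither of which is formal for these varieties and both of which must be shown to place the peak of $(b_{2k})_k$ at the location predicted by the experiments.
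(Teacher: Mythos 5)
First, a point of comparison: the paper does \emph{not} prove this statement. It is the closing conjecture, supported only by the plotted coefficient profiles, with a remark that such unimodality is frequently tied to Hard Lefschetz phenomena (citing \cite{MR3364745}). So there is no proof in the paper to measure your attempt against, and any complete argument would be new. Judged on its own terms, your text is a research programme, not a proof: you yourself concede that ``the Lefschetz step itself'' — the only step that would actually yield unimodality — is not established by any of the three routes you outline. That alone makes this a genuine gap rather than a proof.

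Moreover, your main route is refuted, as formulated, by the paper's own data. A degree-two operator $L$ on $H^*(X_s,\C)$ admitting hard Lefschetz isomorphisms $L^k\colon H^{m-k}\xrightarrow{\sim}H^{m+k}$ ``centred at the peak'' forces $b_{m-k}=b_{m+k}$ for all $k$, i.e.\ symmetry of the Betti sequence about the peak, not just unimodality. But Kac polynomials are monic (Kac, \cite[\S 1.15]{MR718127}) while their constant coefficients are typically larger than $1$ (e.g.\ $A_{K_r,(2,3)}$ has constant term $2$), and every graph in the paper shows a peak near $x\approx 1/3$ with a long asymmetric tail; so no such $\mathfrak{sl}_2$ can exist on $H^*(X_s)$. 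Relative Hard Lefschetz for $\pi\colon X_s\to X_0$ does not produce one: it gives symmetry of the \emph{perverse} Betti numbers of the summands of $R\pi_*\C$ in the decomposition theorem, not injectivity of cup product on ordinary cohomology, and for noncompact $X_s$ ordinary Hard Lefschetz fails just as Poincar\'e duality does. Your purity input backfires for the same reason: $X_s$ is semiprojective, so $H^k$ is pure of weight $k$ (\cite{MR3364745}), and a ``curious'' Lefschetz taken with respect to the weight filtration would then coincide with the ordinary one and again force palindromicity, contradicting the data. Finally, your reduction to Betti numbers of a single smooth variety is only valid for indivisible $\dd$ with generic $\lambda$ (Theorem \ref{CBtheorem}); for divisible $\dd$ the coefficients are dimensions of Weyl-group isotypic components (remark after Theorem \ref{HLRV}), so any candidate Lefschetz operator would additionally have to commute with the $W$-action — an issue your sketch does not address. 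The correct structure, if one exists, presumably lives on the perverse or weight-graded pieces rather than on $H^*(X_s)$ itself, which is exactly the open heart of the problem.
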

\begin{remark}
This is a frequent phenomenon having a strong relationship with the Hard Lefschetz Theorem (at least when we consider the Betti numbers of some varieties), see \cite{MR3364745}.
\end{remark}

\appendix
\section{Code for the computations of Kac polynomials}
In this section, we give the entire code used to obtained the results of Example \ref{exKronecker}, Section \ref{multloop} and Section \ref{arrowloop} using SageMath. For the sake of practicality, the code is available on the author's webpage \url{https://www.imo.universite-paris-saclay.fr/~hennecart/}.

\subsection{Explanation for the code}
We use the formulas in \cite{MR1752774}.

If $r\geq 0$, we let $\phi_r(q)=\prod_{j=1}^r(1-q^j)$ and for a partition $\lambda$, $b_{\lambda}(q)=\prod_{i\geq 1}\phi_{n_i}(q)$ if $\lambda=(1^{n_1}2^{n_2}\hdots)$. Note that if $\lambda'$ is the partition conjugate to $\lambda$, one has $n_i=\lambda'_{i}-\lambda'_{i+1}$.

If $\lambda,\mu$ are two partitions, we defined in Theorem \ref{formulaHua} the bilinear product $\langle\lambda,\mu\rangle$. We use another expression for it. If $\lambda'$ (resp. $\mu'$) is the partition conjugate to $\lambda$ (resp. $\mu$), we have
\[
 \langle\lambda,\mu\rangle=\sum_{i\geq 1}\lambda_i'\mu_i'.
\]

We define
\[
 P((x_i)_{i\in I},q)=\sum_{\pi=(\pi^i)\in\mathscr{P}^I}\frac{\prod_{i\rightarrow j\in \Omega}q^{\langle\pi^i,\pi^j\rangle}}{\prod_{i\in I}q^{\langle\pi^i,\pi^i\rangle}b_{\pi^i}(q^{-1})}x^{\lvert\pi\rvert}.
\]

The rational functions of $q$, $H(\dd,q)$, $\dd\in\N^I\setminus\{0\}$ are defined by the equality:
\[
 \log\left(P((x_i)_{i\in I},q)\right)=\sum_{\dd\in\N^I\setminus\{0\}}\frac{H(\dd,q)x^{\dd}}{\overline{\dd}}
\]
where $\log$ is defined by $\log(1-X)=-\sum_{i\geq 1}\frac{X^i}{i}$ and for $\dd\in\N^I$, we let $\overline{\dd}=\gcd(d_i,i\in I)$. 
\begin{theorem}[{\cite[Theorem 4.6]{MR1752774}}]
 For any $\dd\in\N^I\setminus\{0\}$, we have
 \[
  A_{\dd}(q)=\frac{q-1}{\overline{\dd}}\sum_{d\mid \alpha}\mu(d)H\left(\frac{\dd}{d},q^d\right).
 \]
\end{theorem}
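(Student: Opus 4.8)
The plan is to obtain this inversion formula directly from Hua's generating series (Theorem~\ref{formulaHua}) by unwinding the plethystic logarithm. First I would observe that the series $P((x_i)_{i\in I},q)$ defined above is \emph{exactly} the argument of $\Log_{z,q}$ appearing in Theorem~\ref{formulaHua}: writing $\pi^i=(1^{n_1}2^{n_2}\cdots)$ one has $b_{\pi^i}(q^{-1})=\prod_k\phi_{m_k(\pi^i)}(q^{-1})=\prod_k\prod_{j=1}^{m_k(\pi^i)}(1-q^{-j})$, so the two denominators agree term by term, and the constant term of $P$ is $1$ (coming from the tuple of empty partitions). Hence, after the substitution $x_i=z_i$, Theorem~\ref{formulaHua} reads $\sum_{\dd}A_{\dd}(q)z^{\dd}=(q-1)\Log_{z,q}(P)$, and it remains only to express the right-hand side coefficientwise through the functions $H(\dd,q)$.

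Next I would translate the plethystic logarithm into the ordinary one that defines $H(\dd,q)$. Recall from the body of the paper that $\Log_{q,z}(1+f)=\sum_{l\geq 1}\frac{\mu(l)}{l}\psi_l\left(\sum_{k\geq 1}\frac{(-1)^{k+1}f^k}{k}\right)$, and that the inner sum is precisely $\log(1+f)$. Therefore $\Log_{z,q}(P)=\sum_{l\geq 1}\frac{\mu(l)}{l}\psi_l(\log P)$. By the very definition of $H$ one has $\log P=\sum_{\dd\neq 0}\frac{H(\dd,q)}{\overline{\dd}}z^{\dd}$, and since $\psi_l$ sends $q\mapsto q^l$ and $z_i\mapsto z_i^l$, applying it gives $\psi_l(\log P)=\sum_{\dd\neq 0}\frac{H(\dd,q^l)}{\overline{\dd}}z^{l\dd}$.

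It then remains to extract the coefficient of $z^{\ee}$ and to simplify the resulting Moebius sum. In $\sum_{l\geq 1}\frac{\mu(l)}{l}\sum_{\dd\neq 0}\frac{H(\dd,q^l)}{\overline{\dd}}z^{l\dd}$, the monomial $z^{\ee}$ arises only from $l\dd=\ee$, i.e.\ from $\dd=\ee/l$ with $l$ dividing every component of $\ee$, equivalently $l\mid\overline{\ee}$ (this is exactly the condition that makes $\ee/l$ an integral dimension vector, clarifying the range of summation in the statement). For such $l$ one has the key arithmetic identity $\overline{\ee/l}=\overline{\ee}/l$, so that $\frac{\mu(l)}{l}\cdot\frac{1}{\overline{\ee/l}}=\frac{\mu(l)}{\overline{\ee}}$ and the factors of $l$ cancel. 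Collecting terms yields $[z^{\ee}]\Log_{z,q}(P)=\frac{1}{\overline{\ee}}\sum_{l\mid\overline{\ee}}\mu(l)H(\ee/l,q^l)$, and multiplying by $q-1$ gives exactly the stated formula.

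I expect the only genuinely delicate point to be the bookkeeping of the Adams operation $\psi_l$, which raises the \emph{formal variable $q$} (and not merely the $z_i$) to its $l$-th power; tracking this carefully is what produces the arguments $q^d$ in the final sum. The gcd manipulation $\overline{\ee/l}=\overline{\ee}/l$ and the well-definedness of all these operations inside $\Q[[q,z_i:i\in I]]$ are routine. Thus the substance of the argument is entirely the identification of $P$ with Hua's series together with the Moebius inversion of the plethystic logarithm, and no further input is needed.
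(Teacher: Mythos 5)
Your proof is correct. Note first that the paper itself offers no proof of this statement: it is quoted directly from Hua (\cite[Theorem 4.6]{MR1752774}), so there is no internal argument to compare against. Your derivation reverses the logical order of Hua's paper --- there, Theorem 4.6 is the primitive statement (proved by Burnside-type counting over $\F_{q^d}$ together with M\"obius inversion relating $M_{Q,\dd}$, $I_{Q,\dd}$ and $A_{Q,\dd}$), and the plethystic formulation of Theorem \ref{formulaHua} is the repackaging; you instead take Theorem \ref{formulaHua} as given and unwind $\Log_{z,q}$, which is legitimate within this paper's logic and is essentially the standard equivalence between the two formulations. All the individual steps check out: the identification $b_{\pi^i}(q^{-1})=\prod_k\prod_{j=1}^{m_k(\pi^i)}(1-q^{-j})$ is exactly right, the identity $\Log_{z,q}(1+f)=\sum_{l\geq 1}\frac{\mu(l)}{l}\psi_l(\log(1+f))$ is immediate from the paper's definition, and the key arithmetic point $\overline{\ee/l}=\overline{\ee}/l$ for $l\mid\overline{\ee}$, which makes the factors of $l$ cancel, is stated and used correctly. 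Two small remarks in your favor: you silently (and correctly) repaired the typo ``$d\mid\alpha$'' in the statement, which should read $d\mid\overline{\dd}$; and your caveat about well-definedness is apt but mildly misstated --- the coefficients of $P$ are rational functions of $q$ (e.g.\ the partition $(1)$ at a vertex contributes a denominator $q-1$), so one works in $\Q(q)[[z_i:i\in I]]$, or equivalently expands each coefficient at $q=0$ inside $\Q[[q]]$, with $\psi_l$ acting by $q\mapsto q^l$ on rational functions; either reading makes every step routine, as you claim.
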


\subsection{The Kronecker quivers}
We give here the code for Example \ref{exKronecker}.

\begin{verbatim}
R.<q>=PolynomialRing(QQ)
K=FractionField(R)
#We work with the Kronecker quiver with r arrows from the vertex 1 to the vertex 2.
S.<x1,x2>=PowerSeriesRing(K)
 
def produitbilin(l1,l2): #returns the bilinear products of the partitions l1,l2
    r=0
    l1c=Partition(l1).conjugate()
    l2c=Partition(l2).conjugate()
    l=min(len(l2c),len(l1c))
    for i in range(l):
        r=r+l1c[i]*l2c[i]
    return(r)
    
def phi(r): #returns \phi_r(q^{-1})
    p=1
    for i in range(1,r+1):
        p=p*(1-q^(-i))
    return(p)
    
def b(l): #l is a partition
    lc=Partition(l).conjugate()+[0]
    r=1
    for i in range(len(lc)-1):
        r=r*phi(lc[i]-lc[i+1])
    return(r)
    
def polynomeP(d1,d2,r): #returns P(x_1,x_2,q) for the quiver K_r
    P=S(0)
    for i in range(d1+1):
        for j in range(d2+1):
            for p1 in Partitions(i).list():
                for p2 in Partitions(j).list():
                    P=P+((q^(r*produitbilin(p1,p2)))/
                         (q^(produitbilin(p1,p1
                         +produitbilin(p2,p2))*b(p1)*b(p2)))*(x1^i)*(x2^j)
    return(P)
    
def serieH(d1,d2,N,r): #returns the generating series of the polynomials 
                       #H(\dd,q) given by log(P(x_1,x_2,q))
    s=0
    p=polynomeP(d1,d2,r)
    for i in range(1,N+1):
        s=s-(1-p)^i/i
    return(s)
    
def polyH(N,r,d1,d2): #returns the polynomial H((d_1,d_2),q)
    ser=serieH(d1,d2,N,r)
    d=gcd(d1,d2)
    pol=d*(1/(ZZ(d1).factorial()))*(1/(ZZ(d2).factorial()))*(
        (S((S(ser).derivative(x1,ZZ(d1))).derivative(x2,ZZ(d2)))).constant_coefficient())
    return(pol)
    
def KacPolA(d1,d2,r): #returns the Kac polynomial of K_r for the dimension vector (d_1,d_2).
    N=d1+d2
    d=gcd(d1,d2)
    A=0
    div=d.divisors()
    for i in div:
        A=A+moebius(i)*(polyH(N,r,ZZ(d1/i),ZZ(d2/i))(q^i))
    A=A*((q-1)/d)
    return(A)
    
for i in range(1,5):print(R(KacPolA(2,2,i))) #an example
\end{verbatim}

\subsection{The $g$-loop quivers}
We give here the code for Section \ref{multloop}. Some of it is identical to that in the previous Section, but we reproduce it for the convenience of the reader.

\begin{verbatim}
R.<q>=PolynomialRing(QQ)
K=FractionField(R)
S.<x1,x2>=PowerSeriesRing(K)
#We work with the quiver S_g

def polynomePS(d1,g):
    P=S(0)
    for i in range(d1+1):
        for p1 in Partitions(i).list():
            P=P+((q^(g*produitbilin(p1,p1)))/(q^(produitbilin(p1,p1))*b(p1)))*(x1^i)
    return(P)
    
def serieHS(d1,N,g):
    s=0
    p=polynomePS(d1,g)
    for i in range(1,N+1):
        s=s-(1-p)^i/i
    return(s)
    
def polyHS(N,g,d1):
    ser=serieHS(d1,N,g)
    d=d1
    pol=d*(1/(ZZ(d1).factorial()))*((S((S(ser).derivative(x
    ,ZZ(d1))))).constant_coefficient())
    return(pol)
    
def KacPolAS(d1,g):
    N=d1
    d=d1
    A=0
    div=d.divisors()
    for i in div:
        A=A+moebius(i)*(polyHS(N,g,ZZ(d1/i))(q^i))
    A=A*((q-1)/d)
    return(A)
    
for r in range(1,6): print(KacPolAS(2,g)) 
\end{verbatim}

\subsection{The Tennis Racket quiver}
We give the code for Section \ref{arrowloop}.
\begin{verbatim}
R.<q>=PolynomialRing(QQ)
K=FractionField(R)
S.<x1,x2>=PowerSeriesRing(K)
 
def produitbilin(l1,l2):
   r=0
   l1c=Partition(l1).conjugate()
   l2c=Partition(l2).conjugate()
   l=min(len(l2c),len(l1c))
   for i in range(l):
       r=r+l1c[i]*l2c[i]
   return(r)
def phi(r):
    p=1
    for i in range(1,r+1):
        p=p*(1-q^(-i))
    return(p)
    
def b(l):
    lc=Partition(l).conjugate()+[0]
    r=1
    for i in range(len(lc)-1):
        r=r*phi(lc[i]-lc[i+1])
    return(r)
    
def polynomeP(d1,d2,r,g):
    P=S(0)
    for i in range(d1+1):
        for j in range(d2+1):
            for p1 in Partitions(i).list():
                for p2 in Partitions(j).list():
                    P=P+((q^(r*produitbilin(p1,p2
                    +g*produitbilin(p2,p2)))/(q^(produitbilin(p1,p1
                    +produitbilin(p2,p2))*b(p1)*b(
                    2)))*(x1^i)*(x2^j)
    return(P)
    
def serieH(d1,d2,N,r,g):
    s=0
    p=polynomeP(d1,d2,r,g)
    for i in range(1,N+1):
        s=s-(1-p)^i/i
    return(s)
    
def polyH(N,r,g,d1,d2):
    ser=serieH(d1,d2,N,r,g)
    d=gcd(d1,d2)
    pol=d*(1/(ZZ(d1).factorial()))*(1/(ZZ(d2).factorial()))*(
        (S((S(ser).derivative(x1,ZZ(d1))).derivative(x2,ZZ(d2)))).constant_coefficient())
    return(pol)
    
def KacPolA(d1,d2,r,g):
    N=d1+d2
    d=gcd(d1,d2)
    A=0
    div=d.divisors()
    for i in div:
        A=A+moebius(i)*(polyH(N,r,g,ZZ(d1/i),ZZ(d2/i))(q^i))
    A=A*((q-1)/d)
    return(A)

\end{verbatim}

\section*{Acknowledgements}
The author warmly thanks Olivier Schiffmann for useful comments and corrections concerning this work and the anonymous referees whose comments have allowed considerable improvements and corrections of inacurracies in a previous version.

\end{document}